\documentclass[11pt,a4paper]{amsart}
\usepackage{graphics}
\usepackage{amscd}
\usepackage{comment}
\usepackage{oldgerm}
\usepackage{amsmath}
\usepackage{amssymb}
\usepackage{amsthm}
\usepackage{color}
\usepackage{bm}

\usepackage[all]{xy}
\usepackage[foot]{amsaddr}

\makeatletter
\@addtoreset{equation}{section}
\makeatother

\theoremstyle{definition}
\newtheorem{defn}[equation]{Definition}
\theoremstyle{plain}
\newtheorem{thm}[equation]{Theorem}
\newtheorem{prob}[equation]{Problem}
\newtheorem{prop}[equation]{Proposition}
\newtheorem{fact}[equation]{Fact}

\newtheorem{lem}[equation]{Lemma}

\theoremstyle{remark}
\newtheorem{rem}[equation]{Remark}
\newtheorem{ex}[equation]{Example}
\newcommand{\aaa}{\"a}

\newcommand{\Z}{\mathbb{Z}}
\newcommand{\N}{\mathbb{N}}
\newcommand{\R}{\mathbb{R}}
\newcommand{\C}{\mathbb{C}}
\newcommand{\T}{\mathbb{T}}

\newcommand{\del}{\partial}

\begin{document}

\title[Deformation quantization for Lagrangian fibrations]{A new construction of strict deformation quantization for Lagrangian fiber bundles}
\author[M. Yamashita]{Mayuko Yamashita}
\address{Research Institute for Mathematical Sciences, Kyoto University, 
606-8502, Kyoto, Japan}
\email{mayuko@kurims.kyoto-u.ac.jp}
\subjclass[]{}
\maketitle

\begin{abstract}
    We give a new construction of strict deformation quantization of symplectic manifolds equipped with a proper Lagrangian fiber bundle structure, whose representation spaces are the quantum Hilbert spaces obtained by geometric quantization.
    The construction can be regarded as a ''lattice approximation of the correspondence between differential operators and principal symbols". 
    We analyze the corresponding formal deformation quantization.  
    We also investigate into relations between our construction and Berezin-Toeplitz deformation quantization. 
\end{abstract}

\tableofcontents

\section{Introduction}
In this paper, we give a new construction of strict deformation quantization of symplectic manifolds equipped with a proper Lagrangian fiber bundle structure, whose representation spaces are the quantum Hilbert spaces obtained by geometric quantization. 

\subsection{Deformation quantizations and geometric quantizations}
First we explain some background. 
Let $(X^{2n}, \omega)$ be a $2n$-dimensional symplectic manifold. 
We have the Poisson algebra structure on $C^\infty(X)$. 
In this paper we are interested in finding {\it strict deformation quantizations} for $(X, \omega)$. 
The notion of strict (or {\it $C^*$-algebraic}) deformation quantization is introduced in \cite{Rieffel1994}. 
In this paper we use the following definition. 

\begin{defn}\label{def_strict_DQ}
Given a symplectic manifold $(X, \omega)$, a {\it strict deformation quantization} consists of the following data. 
\begin{itemize}
    \item A sequence of Hilbert spaces $\{\mathcal{H}_k\}_{k \in \N}$. 
    \item A sequence $\{Q^k\}_{k \in \N}$ of adjoint-preserving linear maps $Q^k \colon C_c^\infty(X) \to \mathbb{B}(\mathcal{H}_k)$ so that for all $f, g \in C^\infty(X)$, we have
\begin{enumerate}
    \item $\|Q^k(f)\| \to \|f\|_{C^0}$ as $k \to \infty$, and
    \item $\|[Q^k(f), Q^k(g)] +\frac{\sqrt{-1}}{k}Q^k(\{f, g\})\| =O(\frac{1}{k^2})$ as $k \to \infty$. 
\end{enumerate}
\end{itemize}
  
\end{defn}
Note that there exists many variants in the definition, and the most general one uses the notion of continuous fields of $C^*$-algebras as in \cite{Rieffel1994}.  

Another formulation of deformation quantization is {\it formal deformation quantizations} defined in \cite{BFFLS1978}, where they seek for an associative unital product $*$, called a {\it star product}, on the set of formal power series $C^\infty(X)[[\hbar]]$ satisfying
\begin{align*}
    f * g = fg + O(\hbar) \mbox{ and }
    f * g - g * f =  \{f, g\}\hbar + O(\hbar^2). 
\end{align*}
Strict and formal deformation quantizations are related as follows. 
If we have a strict deformation quantization $\{Q_k\}_{k \in \N}$ for $(X, \omega)$, we expect to solve the equations
\begin{align}\label{eq_intro_star}
    Q^k(f)Q^k(g) = \sum_{j = 0}^l \left( \frac{-\sqrt{-1}}{k}\right)^j Q^k\left(\mathcal{C}_j(f, g)\right) + O\left(\frac{1}{k^{l + 1}}\right)
\end{align}
recursively in $l$, where $\mathcal{C}_j(\cdot, \cdot)$ is expected to be given by a differential operator, and get a star product $*$ by
\begin{align*}
    f * g = \sum_{j = 0}^\infty \mathcal{C}_j(f, g) \hbar^j. 
\end{align*}

On the other hand, given a symplectic manifold $(X, \omega)$, {\it geometric quantization} is a process to produce {\it quantum Hilbert spaces}, which is, physically, expected to be representation spaces for the Poisson algebra $C^\infty(X)$.
See \cite{Woodhouse1992} for details. 
The process goes as follows. 
First we fix a {\it prequantizing line bundle} $(L, \nabla)$ on $X$, which is a hermitian line bundle with unitary connection, satisfying $\nabla^2 = -\sqrt{-1}\omega$. 
To do this we need the integrality of $\omega/(2\pi)$. 
Next we choose a {\it polarization} $\mathcal{P} \subset TX \otimes \C$, which is an integrable Lagrangian subbundle of $TX \otimes \C$. 
Then, roughly speaking we define the quantum Hilbert space $\mathcal{H}_k$ for $k \in \N$ as the space of sections of $L^k$ which is parallel, with respect to the connection induced from $\nabla$, along vectors in $\mathcal{P}$. 

In this paper we are particularly interested in the polarization coming from a proper {\it Lagrangian fiber bundle} $\mu \colon X^{2n} \to B^n$ with connected fibers. 
    In this case the fibers are $n$-dimensional affine torus by Arnold-Liouville theorem \cite{Arnold1989}. 
    A point $b \in B$ is called a $k$-{\it Bohr-Sommerfeld point} if the space of fiberwise parallel sections of $(L^k, \nabla)$, denoted by $H^0(X_b; L^k)$, is nontrivial. 
    In this paper we define the quantum Hilbert space by the following.  
    \begin{align}\label{eq_def_quantum_hilb.y}
        \mathcal{H}_k = \oplus_{b \in B_k} H^0(X_b; L^k\otimes |\Lambda|^{1/2}X_b),
    \end{align}
    where $B_k$ denotes the set of $k$-Bohr-Sommerfeld points, and $|\Lambda|^{1/2}X_b := |\Lambda|^{1/2}(T^*X_b)$ is the half-density bundle equipped with the canonical flat connection. 
    Thus, we have a one-dimensional Hilbert space on each $k$-Bohr-Sommefeld point, and the quantum Hilbert space is their direct sum.  
By Arnold-Liouville theorem, we can take a local action-angle coordinate which identifies the prequantum line bundle with the standard one, $(\R^n \times T^n, {}^t\! dx \wedge d\theta, L = \underline{\C}, \nabla = d -\sqrt{-1}{}^t\!x d\theta)$. 
In this coordinate, we have $B_k = \frac{\Z^n}{k}$. 
So we regard the set $B_k$ as a ''lattice approximation" of the integral affine manifold $B$, and $\mathcal{H}_k$ in \eqref{eq_def_quantum_hilb.y} can be regarded as approximation of $L^2(B)$ as $k \to \infty$. 
This observation is the key to our construction below. 

On the other hand, another well-studied type of polarization is {\it K\aaa hler polarization}, coming from an $\omega$-compatible complex structure $J$ on $X$. 
In this case $\mathcal{P} = T_J^{0, 1}X$, and the quantum Hilbert spaces are $L^2H^0(X_J; L^k)$, the space of $L^2$-holomorphic sections on $L^k$. 

So, the following problem arises naturally. 
\begin{prob}\label{prob_intro}
Given a prequantized symplectic manifold $(X, \omega, L, \nabla)$ and a polarization $\mathcal{P}$, construct a strict deformation quantization $\{Q^k\}_k$, whose representation spaces $\{\mathcal{H}_k\}_{k \in \N}$ are those obtained by the geometric quantization. 
\end{prob}

For K\aaa hler quantization, there are natural and well-studied answer to Problem \ref{prob_intro}, namely Berezin-Toeplitz deformation quantization. 
This is given by the multiplication operator composed with the orthogonal projection onto the space of holomorphic sections (see Definition \ref{def_BTDQ.y} below). 
It was shown that these operators have the correct semiclassical behavior, in the case for compact K\aaa ler manifolds by Bordemann, Meinrenken, and Schlichenmaier \cite{BMS1994}, and for a certain class of non-compact K\aaa hler manifolds by Ma and Marinescu \cite{MaMarinescu2008}. 
Moreover, Schlichenmaier \cite{Schlichenmaier2000} has shown that this induces a star product called Berezin-Toeplitz star product. 

However, Problem \ref{prob_intro} in the case where $\mathcal{P}$ comes from Lagrangian fiber bundles seems to have not been considered. 
The purpose of this paper is to give an answer to Problem \ref{prob_intro} in the case where $\mathcal{P}$ comes from proper Lagrangian fiber bundles, and investigate into some basic properties. 
The construction can be regarded as a ''lattice approximation of the correspondence between differential operators and principal symbols", where the set $B_k$ is regarded as the lattice approximation of $B$. 

\subsection{A motivation for the construction --- The quantization of $T^*M$}
Let $M^n$ be a smooth manifold. 
As a motivation for our construction, in this subsection we recall the usual symbol map and its "inverse" map, which can be considered as a quantization procedure of the symplectic manifold $T^*M$ equipped with the vertical real polarization $T^*M \to M$. 

The principal symbol map transforms commutators of differential operators into the Poisson bracket with respect to the canonical symplectic structure on $T^*M$, 
\begin{align*}
    \sigma_{\mathrm{pr}}([D_1, D_2]) = -\sqrt{-1}\{\sigma_{\mathrm{pr}}(D_1), \sigma_{\mathrm{pr}}(D_2)\}. 
\end{align*}

Let $\sigma \in C^{\infty}(T^*\R^n)$ be a symbol of some pseudodifferential operator on $\R^n$. 
Then, the operator $P \in \Psi^*(\R^n)$ with symbol $\sigma$ is, up to smoothing operators, recovered by the following procedure. 
Identifying $P$ with its integral kernel $K \in C^{-\infty}(\R^n \times \R^n)$, we define
\begin{align}\label{eq_fourier_trans.y}
    K(y, x) = \int  e^{-\sqrt{-1}\langle y-x, \xi \rangle}\sigma (x, \xi) d\xi. 
\end{align}
In other words, the inverse map for the principal symbol map is given by the Fourier transform on each fiber of $T^*\R^n$. 
For general manifold $M$, by patching the above fiberwise Fourier transform for $T^*M$ together, from a principal symbol, we can recover the operator up to lower order.

If we regard this procedure as a quantization map of $T^*M$ induced by the Lagrangian fibration $T^*M \to M$, it is natural to generalize this to the case of proper Lagrangian fiber bundles. 
In this case, the fiberwise Fourier transform is replaced by the fiberwise Fourier expansion, producing an operator on the lattice approximation of the base manifold. 

\subsection{Outline of the paper}
This paper is organized as follows. 
In Section \ref{sec_construction.y}, we give a construction of strict deformation quantization. 
First in subsection \ref{subsec_model.y}, we give the construction for the model case $(\R^n \times T^n, {}^t\!dx \wedge d\theta)$ equipped with the Lagrangian fiber bundle structure $\mu \colon \R^n \times T^n \to \R^n$ in Definition \ref{def_rep_model.y}, and prove that it is indeed a strict deformation quantization in Theorem \ref{thm_model}. 
Next, we deal with the general case in subsection \ref{subsec_construction_general.y}. 
The construction is given in Definition \ref{def_rep_model.y}, and prove that it is a strict deformation quantization in Theorem \ref{thm_general}. 

In Section \ref{sec_star}, we analyze the formal deformation quantization induced by our strict deformation quantization. 
Since our construction can be written explicitely in action-angle coordinate locally, in principle we can solve the equation \eqref{eq_intro_star} explicitly. 
In nice cases, we show in Theorem \ref{thm_star_integrable} that the star product obtained from our strict deformation quantization conincides with the star product given by the Fedosov's construction \cite{Fedosov1994}. 
In general cases, by an explicite computation we check this coincidence up to second order term in Theorem \ref{thm_star_gen.y}. 

In Section \ref{sec_relation.y}, we explain a relation between our construction and Berezin-Toeplitz deformation quantization. 
In general, if we have a prequantized symplectic manifold $(X, \omega)$ equipped with an $\omega$-compatible complex structure as well as a Lagrangian fiber bundle structure, there is no canonical isomorphism between quantum Hilbert spaces obtained by the two polarizations. 
Here we restrict our attention to the case of $\R^n \times T^n$ (subsection \ref{subsec_RnTn_relation.y}) and abelian varieties (subsection \ref{subsec_abelian_relation}) with translation invariant complex structure. 
In those cases we have a natural isomorphism between quantum Hilbert spaces using theta basis for $L^2$-holomorphic sections on $L^k$. 
We show in Theorem \ref{prop_rel_dq.y} and Theorem \ref{prop_rel_dq_abelian.y} that, as $k \to \infty$, the operator norm of the difference between our deformation quantization and Berezin-Toeplitz deformation quantization converges to zero in both cases.

\subsection{Notations}
\begin{itemize}
    \item We let $T := \R / (2\pi \Z)$. 
    \item We denote the trivial hermitian line bundle over a manifold by $\underline{\C}$. 
    \item For a smooth manifold $M$, we denote by $|\Lambda|^{1/2}M := |\Lambda|^{1/2}(T^*M)$ its half-density bundle. 
    \item For a Hilbert space $\mathcal{H}$, we denote by $\mathbb{B}(\mathcal{H})$ its bounded operator algebra. 
    \item Given a fiber bundle $\mu \colon X \to B$ and a subset $U \subset B$, we write $X_U := \mu^{-1}(U)$. 
    For a point $b \in B$, we write $X_b := \mu^{-1}(b)$. 
    \item For a subset $U \subset B$ of the base of a Lagrangian fiber bundle, we denote by $\mathcal{H}_k|_U$ the subspace of the quantum Hilbert space $\mathcal{H}_k$ \eqref{eq_def_quantum_hilb.y} defined by
    \begin{align*}
        \mathcal{H}_k|_U := \oplus_{b \in U \cap B_k} H^0(X_b; L^k\otimes |\Lambda|^{1/2}(X_b)). 
    \end{align*}
    We denote by $P_U$ the orthogonal projection from $\mathcal{H}_k$ onto this subspace. 
    When $U$ consists of a point, $\{x\} = U$, we also write $\mathcal{H}_k|_x = \mathcal{H}_k|_{\{x\}}$ and $P_x = P_{\{x\}}$. 
\end{itemize}

\section{Preliminaries}\label{sec_preliminaries}
In this section, we recall basic facts on Lagrangian fiber bundles and its geometric quantizations. 
\begin{defn}
  Let $(X^{2n}, \omega)$ be a symplectic manifold of dimension $2n$. 
  A regular fiber bundle structure $\mu \colon X^{2n} \to B^n$ is called a {\it Lagrangian fiber bundle} if all the fibers are Lagrangian. 
\end{defn}
In what follows, we are interested in the case where a Lagrangian fiber bundle is proper with connected fibers. 

\begin{ex}\label{ex_std_lag}
Let us consider $X = \R^n \times T^n$ equipped with the symplectic form $\omega = {}^t\! dx \wedge d\theta$. 
Then $\mu \colon X \to \R^n, (x, \theta) \to x$ is a Lagrangian fiber bundle. 
\end{ex}
By Arnold-Liouville theorem \cite{Arnold1989}, any proper Lagrangian fiber bundle with connected fibers are locally isomorphic to the one in Example \ref{ex_std_lag}. Indeed, we have the following.  

\begin{fact}[\cite{Arnold1989}]\label{fact_AL}
Let $(X^{2n}, \omega)$ be a symplectic manifold and $\mu \colon X \to B$ be a proper Lagrangian fiber bundle structure with connected fibers. 
For any point $b \in B$, there exists an open neighborhood $U \subset B$ of $b$ and a symplectomorphism $(X_U, \omega) \simeq  (U' \times T^n, {}^t\! dx \wedge d\theta)$, where $U' \subset \R^n$, which makes the following diagram commutative. 
\begin{align}\label{diag_AL}
\xymatrix@=0pt{
    (X_U, \omega) &\simeq & (U' \times T^n, {}^t\! dx \wedge d\theta) \\
     \downarrow &  & \downarrow \\
     U & \simeq & U'
     }
\end{align}
\end{fact}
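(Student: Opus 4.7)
The plan is to follow the classical construction of action-angle coordinates. First, I would identify each fiber as a torus by exhibiting a free transitive $\R^n$-action. Pick local coordinates $(y_1, \ldots, y_n)$ on $B$ centered at $b$ and set $f_i := \mu^* y_i$. Because the fibers are Lagrangian, any pair of pulled-back functions Poisson-commute, so the Hamiltonian vector fields $X_{f_i}$ commute. Since $X_{f_i}(\mu^* g) = \{f_i, \mu^* g\} = 0$, the $X_{f_i}$ are tangent to fibers; by nondegeneracy of $\omega$ they are pointwise linearly independent. By properness of $\mu$ their joint flow is complete and defines an action of $\R^n$ on each fiber that is transitive (by connectedness) with discrete isotropy. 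This exhibits $X_b$ as $T^*_b B / \Lambda_b$ for a full-rank lattice $\Lambda_b \subset T^*_b B$, invariantly described via the natural action of $T^*_b B$.

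Next I would promote this to a structure on a neighborhood. After shrinking $U$, the lattices $\Lambda_b$ fit into a smooth lattice subbundle $\Lambda \subset T^* U$, and I can pick a smooth frame $\gamma_1(b), \ldots, \gamma_n(b)$ trivializing $\Lambda$. I would then choose a Lagrangian section $s \colon U \to X_U$: any smooth section exists because $X_U \to U$ is a torus bundle with contractible base (after shrinking), and a general section can be corrected to a Lagrangian one by translating fiberwise using the $T^*B$-action (equivalently, since a section's image is $n$-dimensional and $H^2(U) = 0$, we can solve $d\beta = s^*\omega$ and shift by $\beta$). Using $s$, define
\begin{equation*}
\Phi \colon T^* U / \Lambda \longrightarrow X_U, \qquad (b, [\xi]) \longmapsto \phi_\xi(s(b)),
\end{equation*}
where $\phi_\xi$ denotes the time-one Hamiltonian flow generated by any function with differential $\xi$ at $b$. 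This is a diffeomorphism of fiber bundles.

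Then I would introduce action coordinates. Define $x_i(b) := \tfrac{1}{2\pi} \oint_{\gamma_i(b)} \lambda$, where $\lambda$ is a primitive of $\omega$ chosen on a contractible neighborhood of the section $s(U)$ (possible since $s$ is Lagrangian, so $s^* \omega = 0$, allowing a consistent normalization). The classical computation shows $(x_1, \ldots, x_n)$ are independent functions on $U$ (because the $\gamma_i$ are independent in $T^*_b B$) and that $\Phi^*\omega = \sum dx_i \wedge d\theta_i$ in the resulting action-angle coordinates $(x, \theta)$, where $\theta_i \in T = \R/(2\pi\Z)$ parameterize the fiber via the $\gamma_i$. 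Pulling back the diagram \eqref{diag_AL} through $\Phi$ with these coordinates gives the desired symplectomorphism.

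The main obstacle is the last step: verifying that the coordinates $(x_i, \theta_i)$ actually bring $\omega$ into the standard Darboux form $\sum dx_i \wedge d\theta_i$. The subtlety is that the $\theta_i$ are only defined up to affine transformations depending on $b$, and one must use the Lagrangian property of $s$ together with a careful choice of the angle zero section to arrange that the cross terms $dx_i \wedge dx_j$ and $dx_i \wedge d\theta_j$-with-wrong-indices vanish. This is precisely where the action variables defined via period integrals are indispensable: the identity $\Phi^*\omega = \sum dx_i \wedge d\theta_i$ follows from Stokes' theorem applied to the primitive $\lambda$ on fiberwise cylinders bounded by $\gamma_i$.
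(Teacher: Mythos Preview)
The paper does not prove this statement: it is recorded as a \emph{Fact} with a citation to \cite{Arnold1989} and no argument is supplied. Your sketch is a reasonable outline of the classical proof of the Arnold--Liouville theorem, and since the paper simply imports the result, there is nothing to compare against beyond noting that your approach is the standard one found in the cited reference.
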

On a Lagrangian fiber bundle, we call a local coordinate $(x, \theta) \in \R^n \times T^n$ obtained by a local isomorphism \eqref{diag_AL} an {\it action-angle coordinate}. 
As is easy to see, any two action-angle coordinate are related as follows. 
\begin{lem}
Any symplectomorphism $\R^n \times T^n \to \R^n \times T^n$ which is compatible with the fiber bundle structure $ \mu \colon (\R^n \times T^n, {}^t\! dx \wedge d\theta) \to \R^n$ is of the form
\begin{align}\label{eq_transformation}
    (x, \theta) \mapsto ({}^t\!A^{-1}x + c, A\theta + \alpha(x)), 
\end{align}
where $A \in GL_n(\Z)$, $c \in \R^n$ and $\alpha \in C^\infty(\R^n; T^n)$ such that $A^{-1}\frac{\del \alpha}{\del x}$ is symmetric. 
\end{lem}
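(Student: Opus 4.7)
The plan is to start by writing a fiber-preserving diffeomorphism in components and then impose the symplectic condition, which splits into two parts that will respectively determine the linear structure in $\theta$ and give the symmetry condition on $\alpha$.

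First I would write $\Phi(x,\theta) = (\phi(x), \psi(x,\theta))$, where $\phi \colon \R^n \to \R^n$ is a diffeomorphism (obtained from compatibility with $\mu$) and $\psi(x,\cdot) \colon T^n \to T^n$ is a diffeomorphism of the fiber for each $x$. Writing $J(x) := \partial \phi/\partial x$, $K(x,\theta) := \partial \psi/\partial x$ and $L(x,\theta) := \partial \psi/\partial \theta$, a direct expansion of $\Phi^*({}^t\!dx \wedge d\theta) = {}^t\!dx \wedge d\theta$ breaks into two pieces: the $dx_j \wedge d\theta_l$ coefficients yield ${}^tJ\cdot L = I$, and the $dx_j \wedge dx_l$ coefficients yield that ${}^tJ\cdot K$ is symmetric (there is no $d\theta \wedge d\theta$ part since $\psi$ does not depend on $\theta$ in that place). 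The first relation forces $L = {}^tJ^{-1}$, which in particular is independent of $\theta$.

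Next I would exploit this $\theta$-independence. Lifting $\psi$ to the universal cover and integrating in $\theta$, we get $\psi(x,\theta) = L(x)\theta + \alpha(x)$ for some $\alpha(x)$, with $L(x) \in M_n(\R)$. For this affine formula to descend to a well-defined self-map of $T^n = \R^n/(2\pi\Z^n)$ we need $L(x)\cdot(2\pi\Z^n) \subset 2\pi\Z^n$, so $L(x) \in M_n(\Z)$; and since $\psi(x,\cdot)$ is a diffeomorphism, $L(x) \in GL_n(\Z)$. This is the step where the topology of the fiber enters, and I expect it to be the main subtlety. Because $L$ depends continuously on $x$ and takes values in the discrete group $GL_n(\Z)$, while $\R^n$ is connected, $L(x) \equiv A$ is constant. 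Then $J(x) = {}^tA^{-1}$ is constant, so integrating gives $\phi(x) = {}^tA^{-1}x + c$.

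Finally, I would substitute back into the second symplectic condition. With $J = {}^tA^{-1}$ constant and $K = \partial\alpha/\partial x$, the symmetry of ${}^tJ \cdot K$ becomes symmetry of $A^{-1}\,\partial\alpha/\partial x$, which is precisely the condition stated. This yields exactly the form \eqref{eq_transformation} and completes the argument; the only genuinely non-formal input was the discreteness argument forcing $L(x)$ to be constant.
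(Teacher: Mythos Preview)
The paper does not supply a proof of this lemma; it is introduced with ``As is easy to see'' and left to the reader. Your argument is correct and is exactly the standard one. One minor slip: the reason there is no $d\theta_a \wedge d\theta_b$ component in $\Phi^*\omega$ is that $\phi$ (not $\psi$) is independent of $\theta$, so $d\phi_i$ has no $d\theta$-part; your conclusion is right but the parenthetical justification is misworded. Everything else---the identity ${}^tJ\,L = I$ forcing $L$ to be $\theta$-independent, the discreteness of $GL_n(\Z)$ giving constancy of $L$, and the symmetry condition on $A^{-1}\partial\alpha/\partial x$---is handled correctly.
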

Thus, for a Lagrangian fiber bundle as above, the transformation between two action-angle coordinates is given by the formula of the form \eqref{eq_transformation}. 
From this, we see that the base $B$ of a Lagrangian fiber bundle admits a canonical integral affine manifold structure, and the fibers admit a canonical affine torus structure. 

Next we consider prequantum line bundles. 
Recall that a {\it prequantum line bundle} on a symplectic manifold $(X, \omega)$ is a hermitian line bundle with unitary connection $(L, \nabla)$ whose curvature satisfies $\nabla^2 = -\sqrt{-1}\omega$. 
\begin{ex}
For the case $(X, \omega) = (\R^n \times T^n, {}^t\! dx \wedge d\theta)$, we can set $(L, \nabla) = (\underline{\C}, d -\sqrt{-1}{}^t\! x  d\theta))$. 
\end{ex}
On a Lagrangian fiber bundle, a prequantum line bundle also admits a nice local description, as follows. 
\begin{lem}\label{lem_trivial_preq}
In the settings of Fact \ref{fact_AL}, we also assume that $(X, \omega)$ is equipped with a prequantum line bundle $(L, \nabla)$. 
For any point $b \in B$, there exists a contractible open neighborhood $U \subset \R^n$ of $b$ a fiber-preserving symplectomorphism $X_U\simeq U' \times T^n$ as in Fact \ref{fact_AL}, and an isomorphism $(L|_{X_U}, \nabla|_{X_U}) \simeq (\underline{\C}, d -\sqrt{-1}{}^t\! x  d\theta))$ which covers the symplectomorphism. 
\end{lem}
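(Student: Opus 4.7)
The plan is to start from the Arnold--Liouville local model supplied by Fact~\ref{fact_AL} and then adjust both the action--angle coordinates and the hermitian trivialization of the prequantum line bundle so that the connection attains the standard form. First I would apply Fact~\ref{fact_AL} to obtain a contractible open neighborhood $U$ of $b$ and a fiber-preserving symplectomorphism $X_U \simeq U' \times T^n$ with $U' \subset \R^n$ open and contractible, identifying $\omega$ with ${}^t\! dx \wedge d\theta$.

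Next I would establish that $L|_{X_U}$ is topologically trivial as a hermitian line bundle. Since $\omega|_{X_U} = d({}^t\! x\, d\theta)$ is exact, the image of $c_1(L|_{X_U})$ in $H^2(X_U;\R)$ vanishes; and because $H^2(X_U;\Z) \cong H^2(T^n;\Z)$ is torsion-free, $c_1(L|_{X_U}) = 0$ integrally. Choosing a hermitian trivialization $L|_{X_U} \simeq \underline{\C}$, the connection takes the form $\nabla = d + \sqrt{-1}\, A$ for some real $1$-form $A$ on $X_U$ satisfying $dA = -\omega$; in particular the $1$-form $\eta := A + {}^t\! x\, d\theta$ is closed.

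Finally I would use the freedom to translate the action coordinates in order to cancel the cohomology class of $\eta$. Because $U'$ is contractible, $H^1(X_U;\R) \cong H^1(T^n;\R)$ is spanned by $[d\theta^1],\ldots,[d\theta^n]$, so one can write $[\eta] = -\sum_j c_j [d\theta^j]$ for a unique $c = (c_1,\ldots,c_n) \in \R^n$. Replacing $x$ by $\tilde x := x + c$ (a legitimate change of action--angle coordinates by \eqref{eq_transformation}) converts $\eta$ into $\tilde\eta := A + {}^t\! \tilde x\, d\theta = \eta + {}^t\! c\, d\theta$, which is now exact and may thus be written $\tilde\eta = dg$ for some $g \in C^\infty(X_U;\R)$. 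A gauge transformation by $e^{\sqrt{-1}\, g}$ then modifies $A$ by subtracting $dg$, so the connection form in the new hermitian trivialization becomes $-{}^t\! \tilde x\, d\theta$, exactly as required.

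The argument is essentially cohomological bookkeeping and presents no major obstacle; the conceptual point to keep in mind is that translating the action coordinates shifts $[\eta]$ by an arbitrary combination of the $[d\theta^j]$, while gauge transformations kill any exact contribution, and together these two operations suffice to bring $\eta$ to zero. The main substantive input is the triviality of $L|_{X_U}$, which is where the contractibility of $U'$ and the absence of $2$-torsion in $H^2(T^n;\Z)$ are used.
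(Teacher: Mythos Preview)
Your proposal is correct and follows essentially the same route as the paper: trivialize $L$ over the local action--angle chart, write the connection as $d$ plus a real $1$-form whose difference from the standard one is closed, absorb its $H^1(T^n;\R)$-class by a translation of the action coordinates, and then gauge away the remaining exact piece. Your justification of the hermitian triviality of $L|_{X_U}$ via torsion-freeness of $H^2(T^n;\Z)$ is in fact slightly more careful than the paper's one-line assertion that exactness of $\omega|_{X_U}$ suffices.
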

\begin{proof}
First let us choose an action-angle coordinate chart $X_U \simeq V \times T^n$ with $U$ contractible, and denote the coordinate by $(x', \theta')$. 
Since $U$ is contractible and the fibers are Lagrangian, $\omega|_{X_U}$ is exact. 
So $L|_{X_U}$ is trivial as a hermitian line bundle, and taking a trivialization we have $(L|_{X_U}, \nabla|_{X_U}) \simeq (V \times T^n \times \C, d - \sqrt{-1}\beta)$, where $\beta \in \Omega^1(V \times T^n)$. 
Since we have $\nabla^2 = -\sqrt{-1}\omega$, we see that $\beta - {}^t\! x  d\theta$ is closed and defines a class $(\tau_i)_{i = 1}^n$ in $H_{dR}^1(V \times T^n; \R) \simeq \R^n$, and we can write
\begin{align*}
    \beta -{}^t\! x  d\theta = \sum_{i = 1}^n \tau_i d\theta_i + df
\end{align*}
for some $f \in C^\infty(V \times T^n)$. 
Setting $U := V + (\tau_i)_i$, we define the bundle isomorphism, 
\begin{align*}
    V \times T^n \times \C \to U \times T^n \times \C, 
    \quad
    (x, \theta, v) \mapsto (x+  (\tau_i)_i, \theta, e^{-\sqrt{-1}f(x, \theta)}v ). 
\end{align*}
This gives the desired isomorphism. 
\end{proof}

Given a Lagrangian fiber bundle with a prequantum line bundle, the restriction of the prequantum line bundle to each fibers is a flat line bundle. 
Also remark that, the flat connection on the fiberwise tangent bundle $\ker d\mu$ induces the canonical flat connection on the vertical half-density bundle $|\Lambda|^{1/2} (\ker d\mu)^*$. 
\begin{defn}
Assume we are given a pre-quantized symplectic manifold $(X, \omega, L, \nabla)$ equipped with a proper Lagrangian fiber bundle structure $\mu \colon X \to B$ with connected fibers. 
\begin{enumerate}
    \item A point $b \in B$ is called a $k$-{\it Bohr-Sommerfeld point} if the space of fiberwise parallel sections of $(L^k, \nabla)$ is nontrivial.
    \item For each $k$, let $B_k \subset B$ denote the set of $k$-Bohr-Sommerfeld points.
    We define the quantum Hilbert space of level $k$ associated to the real polarization $\ker d\mu \otimes \C \subset TX \otimes \C$ by
    \begin{align}
        \mathcal{H}_k = \oplus_{b \in B_k} H^0(X_b; L^k\otimes |\Lambda|^{1/2}X_b),
    \end{align}
    where $|\Lambda|^{1/2}X_b = |\Lambda|^{1/2} (\ker d\mu)^*|_{X_b}$ is the vertical half-density bundle, equipped with the canonical flat connection. 
\end{enumerate}
\end{defn}

\begin{ex}
In the example $(X, \omega, L, \nabla) = (\R^n \times T^n, {}^t\! dx \wedge d\theta, \underline{\C}, d - \sqrt{-1}{}^t\! x  d\theta)$, the set of $k$-Bohr-Sommerfeld point is given by $B_k = \frac{\Z^n}{k} \subset \R^n$. 
For each $b \in B_k$ we have
\begin{align}\label{eq_basis_model.y}
H^0(X_b; L^k\otimes |\Lambda|^{1/2} X_b) = \C \cdot \{e^{ \sqrt{-1} k\langle b, \theta \rangle}\sqrt{d'\theta}\}, 
\end{align}
where $d'\theta := (2\pi)^{-n/2}d\theta$ be the normalized measure on $T^n$. 
The quantum Hilbert space $\mathcal{H}_k$ is the direct sum, over $B_k$, of the above one-dimensional Hilbert spaces.  
\end{ex}

\section{The construction}\label{sec_construction.y}
\subsection{The model case --- on $\R^n \times T^n$. }\label{subsec_model.y}
In this subsection, as a model case, as well as a building block of the deformation quantization solving the Problem \ref{prob_intro}, we consider the following settings. 
Let $X = \R^n \times T^n$ equipped with the standard symplectic structure ${}^t\! dx\wedge d\theta$ and the Lagrangian fibration $\mu \colon \R^n \times T^n \to \R^n$, $(x, \theta) \mapsto x$. 
Equip $X$ with the canonical prequantizing line bundle $(L = \underline{\C}, \nabla = d -\sqrt{-1} {}^t\!xd\theta)$. 

We denote by $\{\psi_b^k\}_{b \in B_k}$ the orthonormal basis of $\mathcal{H}_k$ given in \eqref{eq_basis_model.y}, namely
\begin{align*}
    \psi_b^k := e^{ \sqrt{-1} k\langle b, \theta \rangle}\sqrt{d'\theta} \in \mathcal{H}_k. 
\end{align*}

Now we construct a adjoint-preserving linear map
\begin{align*}
    \phi^k \colon C_c^\infty(X) \to \mathbb{B}(\mathcal{H}_k). 
\end{align*}
Assume we are given a function $f \in C_c^\infty(X)$. 
Using the canonical basis of $\mathcal{H}_k$ given in \eqref{eq_basis_model.y}, the operator $\phi^k(f)$ is identified by a $B_k \times B_k$-matrix $\{K_f(b, c)\}_{b, c \in B_k}$. 
Matrix elements $K_f(b, c)$ for $b, c \in B_k$ is given as follows. 
\begin{align}\label{eq_mat_elem_model.y}
    K_f(b, c) := \int_{T^n} e^{- \sqrt{-1} k \langle b - c, \theta \rangle} f((b+c)/2, \theta) d'\theta. 
\end{align}
In other words, $K_f(b, c)$ is given by the $k(b-c)$-th coefficient in the Fourier expansion of $f(c, \theta)$. 
This is regarded as a discrete analogue of the formula \eqref{eq_fourier_trans.y}. 
It is easy to see this formula takes a real-valued function to a self-adjoint operator, thus the linear map $\phi^k$ is adjoint-preserving. 
Note that we are using the value of a function at the middle point $(b + c)/2$, in order to make this map adjoint-preserving.  

The formula \eqref{eq_mat_elem_model.y} gives, a priori, a densely defined possibly unbounded operator $\phi^k(f)$ on $\mathcal{H}_k$. 
We are going to prove that that this operator is bounded in Lemma \ref{lem_bdd_DQ.y}. 

As a preparation, we give an easy estimate of norms of bounded operators. 
We are going to use this lemma throughout this paper, in order to estimate norms of operators whose entries are concentrated near the diagonal. 

\begin{lem}\label{lem_est_norm.y}
Fix a positive integer $n$. 
Let $\mathcal{H}$ be a separable Hilbert space, and assume that we are given an complete orthonormal basis $\{\psi_x\}_{x \in \Z^n}$ for $\mathcal{H}$ labelled by $\Z^n$. 
Let $\mathcal{A}$ be a possibly unbounded densely defined linear operator on $\mathcal{H}$, defined in terms of matrix coefficients with respect to the basis $\{\psi_x\}_{x\in \Z^n}$, denoted by $K(x, y)$ for $(x, y) \in \Z^n$. 
Then we have
\begin{align*}
    \|\mathcal{A}\| \le \sum_{m \in \Z^n} \left( \sup_{x \in \Z^n} \left|K(x + m, x)\right|\right). 
\end{align*}
\end{lem}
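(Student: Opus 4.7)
The plan is to decompose $\mathcal{A}$ according to the ``diagonal distance'' $m = x - y$ between matrix entries, bound each piece by a weighted-shift estimate, and sum via the triangle inequality. This is essentially a Schur-type test adapted to the lattice structure.

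First, if the right-hand side is infinite there is nothing to prove, so assume $S := \sum_{m \in \Z^n} \sup_{x \in \Z^n} |K(x+m, x)| < \infty$. For each $m \in \Z^n$, I would define an auxiliary operator $\mathcal{A}_m$ on the dense subspace of finitely supported vectors by
\begin{align*}
    \mathcal{A}_m \psi_x := K(x+m, x)\, \psi_{x+m}, \qquad x \in \Z^n.
\end{align*}
Since the map $\psi_x \mapsto \psi_{x+m}$ is a unitary shift and $\mathcal{A}_m$ is its composition with the diagonal multiplication operator $\psi_x \mapsto K(x+m, x)\psi_x$, one has the exact identity
\begin{align*}
    \|\mathcal{A}_m\| = \sup_{x \in \Z^n} |K(x+m, x)|.
\end{align*}
This is the basic weighted-shift computation and is the one nontrivial estimate in the argument.

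Next I would observe that, when applied to a finitely supported vector $v = \sum_x v_x \psi_x$, both $\mathcal{A} v$ and $\sum_{m} \mathcal{A}_m v$ produce the same element of $\mathcal{H}$, since the $y$-coefficient of each is $\sum_x K(y, x) v_x$ (the sum being finite). Under our assumption $S < \infty$, the series $\sum_m \mathcal{A}_m$ converges in operator norm (on the finitely supported subspace) to a bounded operator of norm at most $S$ by the triangle inequality. Hence $\mathcal{A}$ agrees with this bounded operator on a dense subspace, which forces $\mathcal{A}$ itself to be bounded with $\|\mathcal{A}\| \leq S$, as claimed.

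The main (mild) obstacle is the bookkeeping around $\mathcal{A}$ being only densely defined: one must check that the identification of $\mathcal{A}$ with $\sum_m \mathcal{A}_m$ genuinely holds on a dense set, rather than only formally at the level of matrix entries. Once the finitely supported vectors are taken as the common core, this is immediate, and the remainder of the proof reduces to the triangle inequality together with the weighted-shift norm computation above.
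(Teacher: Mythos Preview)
Your proof is correct and follows essentially the same approach as the paper: decompose $\mathcal{A}$ into pieces $\mathcal{A}_m$ indexed by the diagonal shift $m$, recognize each $\mathcal{A}_m$ as a unitary shift composed with a diagonal multiplication (the paper writes this as $S_m \cdot \mathrm{diag}(\{K(x+m,x)\}_x)$), compute $\|\mathcal{A}_m\| = \sup_x |K(x+m,x)|$, and sum by the triangle inequality. Your treatment of the densely-defined issue via finitely supported vectors is in fact slightly more careful than the paper's.
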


\begin{proof}
For $x \in \Z^n$, let $P_x \in \mathbb{B}(\mathcal{H})$ be the orthogonal projection onto the one-dimensional subspace $\C \cdot \psi_x \subset \mathcal{H}$. 
We decompose
\begin{align*}
    \mathcal{A} =\sum_{m \in \Z^n} \left( \sum_{x \in \Z^n} P_{x + m} \mathcal{A} P_x
    \right). 
\end{align*}
For each $m \in \Z^n$, we define the ''shift by $m$" operator $S_m \in \mathbb{U}(\mathcal{H})$ by the formula
\begin{align*}
    S_m(\psi_x) := \psi_{x + m}, 
\end{align*}
for each $x \in \Z^n$. 
For each $m$, we have
\begin{align*}
    \sum_{x \in \Z^n} P_{x + m} \mathcal{A} P_x = S_m \cdot \mathrm{diag}(\{K(x + m, x)\}_{x \in \Z^n}), 
\end{align*}
where $\mathrm{diag}(\{K(x + m, x)\}_{x \in \Z^n})$ means the diagonal operator with respect to the orthonormal basis $\{\psi_x\}_x$, whose $x$-th entry is given by $K(x + m, x)$. 
So the norm is given by
\begin{align*}
 \left\|\sum_{x \in \Z^n} P_{x + m} \mathcal{A} P_x \right\| &= 
 \left\|\mathrm{diag}(\{K(x + m, x)\}_{x \in \Z^n}) \right\| \\
 &= \sup_{x \in \Z^n} \left| K(x + m, x)\right|. 
\end{align*}
So we get
\begin{align*}
    \|\mathcal{A}\| &\le \sum_{m \in \Z^n} \left\| \sum_{x \in \Z^n} P_{x + m} \mathcal{A} P_x
    \right\| \\
    &= \sum_{m \in \Z^n} \left( \sup_{x \in \Z^n} \left| K(x + m, x)\right| \right). 
\end{align*}
\end{proof}

\begin{lem}\label{lem_bdd_DQ.y}
For $f \in C^\infty_c(X)$, the linear operator $\phi^k(f)$ defined in terms of the matrix coefficient in \eqref{eq_mat_elem_model.y} is a bounded operator on $\mathcal{H}_k$. 
\end{lem}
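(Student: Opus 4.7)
The plan is to reduce the statement to Lemma \ref{lem_est_norm.y} by identifying the index set $B_k = \Z^n/k$ with $\Z^n$ via the bijection $b \leftrightarrow kb$, under which the orthonormal basis $\{\psi_b^k\}_{b \in B_k}$ of $\mathcal{H}_k$ is labelled by $\Z^n$. Setting $b = (x+m)/k$ and $c = x/k$ for $x, m \in \Z^n$, the defining formula \eqref{eq_mat_elem_model.y} rewrites the matrix element as
\begin{align*}
K_f\bigl((x+m)/k,\, x/k\bigr) = \int_{T^n} e^{-\sqrt{-1}\langle m, \theta \rangle} f\!\left(\tfrac{2x+m}{2k}, \theta\right) d'\theta,
\end{align*}
which is exactly $\hat{f}\bigl(\tfrac{2x+m}{2k}, m\bigr)$, where $\hat{f}(x, m)$ denotes the $m$-th $\theta$-Fourier coefficient of the smooth function $\theta \mapsto f(x,\theta)$. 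This Fourier interpretation is the key, and mirrors the fact mentioned right after \eqref{eq_mat_elem_model.y}.

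From here the bound will follow from two standard features of $f \in C^\infty_c(\R^n \times T^n)$. First, since $f$ has compact support in the $x$-direction, so does $\hat{f}(\cdot, m)$ for each $m$, and in particular $\|\hat{f}(\cdot, m)\|_{C^0(\R^n)}$ is finite. Second, because $f$ is smooth in $\theta$, repeated integration by parts in the angle variable gives, for every $N$, an estimate of the form $\|\hat{f}(\cdot, m)\|_{C^0} \le C_N (1 + |m|)^{-N}$, so that $\sum_{m \in \Z^n} \|\hat{f}(\cdot, m)\|_{C^0} < \infty$. Together these yield the uniform pointwise bound
\begin{align*}
\sup_{x \in \Z^n} \left|K_f\bigl((x+m)/k,\, x/k\bigr)\right| \le \|\hat{f}(\cdot, m)\|_{C^0},
\end{align*}
with summable right-hand side that is moreover independent of $k$.

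Plugging this into Lemma \ref{lem_est_norm.y} immediately produces a finite bound on $\|\phi^k(f)\|$, which establishes boundedness. I do not anticipate any real obstacle: the argument is essentially a direct invocation of the preceding lemma together with classical decay of Fourier coefficients of smooth functions on the torus. The only small matters of bookkeeping are the rescaling factor between $B_k$ and $\Z^n$ and the identification of the midpoint $(b+c)/2$ with $(2x+m)/(2k)$, neither of which interferes with the final estimate; in fact the resulting bound is uniform in $k$, which will be useful later for the semiclassical analysis in Theorem \ref{thm_model}.
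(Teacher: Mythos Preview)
Your proposal is correct and follows essentially the same route as the paper: both identify $B_k$ with $\Z^n$, recognize $K_f(x+m/k,x)$ as the $m$-th Fourier coefficient $f_m(x+m/(2k))$, invoke the rapid decay $\|f_m\|_{C^0}\le C_N(1+|m|)^{-N}$ (the paper uses $N=n+1$), and feed this into Lemma \ref{lem_est_norm.y}. Your remark that the resulting bound is uniform in $k$ is also implicit in the paper's argument.
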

\begin{proof}
Let $f(x, \theta) = \sum_{m \in \Z^n} f_m(x) e^{ \sqrt{-1} \langle m, \theta \rangle}$ be the fiberwise Fourier expansion of $f$. 
Since $f$ is smooth and compactly supported, there exists a constant $C$ such that we have
\begin{align*}
    \|f_m\|_{C^0} \le \frac{C}{(1 + |m|)^{n+1}}
\end{align*}
for all $m \in \Z^n$. 
We apply Lemma \ref{lem_est_norm.y} to the operator $\phi^k(f)$ (the index set $B_k = \frac{\Z^n}{k}$ of the orthonormal basis for $\mathcal{H}_k$ is rescaled to $\Z^n$ in the obvious way). 
By \eqref{eq_mat_elem_model.y}, we have
\begin{align*}
    \left| K_f\left(x + \frac{m}{k}, x\right) \right| = \left|f_{m}\left(x + \frac{m}{2k}\right)\right|
    \le \frac{C}{\left(1 + |m|\right)^{n+1}}, 
\end{align*}
for all $x \in B_k$. 
So we get
\begin{align*}
    \|\phi^k(f)\| \le
     C \sum_{m \in \Z^n} \frac{1}{(1 + |m|)^{n+1}}
    < +\infty. 
\end{align*}
\end{proof}

Now we consider a more coordinate-free way to express \eqref{eq_mat_elem_model.y}. 
First we note the following. 
Using the product structure $X = \R^n \times T^n$ and the trivialization of $L$, for any points $b, c\in \R^n$ we get the explicit affine isomorphism
\begin{align}\label{eq_isom_base.y}
    X_b \simeq X_{(b + c)/ 2} \simeq X_c \simeq T^n, 
\end{align}
and the explicit ismomorphism of line bundles 
\begin{align}\label{eq_isom_line.y}
    L|_{X_b} \simeq L|_{X_{(b+c)/2}} \simeq L|_{X_c} \simeq T^n \times \C
\end{align}
that covers \eqref{eq_isom_base.y}. 
Using the isomorphisms \eqref{eq_isom_base.y} and \eqref{eq_isom_line.y}, we can regard a function $F_b \in C^\infty(X_b)$ or a section $\xi_b \in C^\infty(X_b; L|_{X_b})$ as an element in $C^\infty(T^n)$. 
So we can multiply a section $\xi_c \in C^\infty(X_c; L|_{X_c})$ by a function on a different fiber, $F_b \in C^\infty(X_b) $ to get an elemeent $F_b \cdot \xi_c \in C^\infty(T^n) \simeq C^\infty(X_c; L|_{X_c})$. 
Since the vertical half-density bundle $|\Lambda|^{1/2}(\ker d\mu)^*$ is equipped with the canonical flat connection, we also get a well-defined pairing of sections of $L\otimes |\Lambda|^{1/2}(\ker d\mu)^*$ between different fibers, denoted by $\langle \cdot, \cdot \rangle_{T^n}$. 

After these preparations, we proceed to give a more coordinate-free way to express \eqref{eq_mat_elem_model.y}.
The following formula follows directly from the definition. 
\begin{lem}
We have
\begin{align*}
    K_f(b, c) = \langle \psi_b^k, f|_{X_{(b + c)/2}} \cdot \psi_c^k \rangle_{T^n}, 
\end{align*}
where the left hand side is defined in \eqref{eq_mat_elem_model.y}. 
\end{lem}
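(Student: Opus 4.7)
The plan is to unpack the pairing on the right-hand side using the explicit trivializations \eqref{eq_isom_base.y} and \eqref{eq_isom_line.y} and observe that, after transporting the three objects $\psi_b^k$, $\psi_c^k$, and $f|_{X_{(b+c)/2}}$ to a single common copy of $T^n$, the pairing reduces to the integral in \eqref{eq_mat_elem_model.y}. Concretely, using the product structure $X = \R^n \times T^n$ and the trivialization $L = \underline{\C}$, the section $\psi_b^k = e^{\sqrt{-1}k\langle b,\theta\rangle}\sqrt{d'\theta}$ is identified with the same expression in $\theta$ on $T^n$, and similarly for $\psi_c^k$; the function $f|_{X_{(b+c)/2}}$ becomes $\theta \mapsto f((b+c)/2,\theta)$.

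The computation itself is then immediate. The pairing $\langle\cdot,\cdot\rangle_{T^n}$ is, under these identifications, the $L^2$-inner product on $T^n$: the two half-density factors $\sqrt{d'\theta}$ combine into the full measure $d'\theta$, the hermitian inner product on the trivialized line bundle is the standard one on $\C$, and the canonical flat connection on $|\Lambda|^{1/2}(\ker d\mu)^*$ is trivial in these coordinates (so the identifications are connection-preserving and the pairing is well-defined independently of the choice of intermediate fiber). Thus
\begin{align*}
\langle \psi_b^k, f|_{X_{(b+c)/2}}\cdot \psi_c^k\rangle_{T^n}
&= \int_{T^n} \overline{e^{\sqrt{-1}k\langle b,\theta\rangle}}\, f\bigl((b+c)/2,\theta\bigr)\, e^{\sqrt{-1}k\langle c,\theta\rangle}\, d'\theta \\
&= \int_{T^n} e^{-\sqrt{-1}k\langle b-c,\theta\rangle}\, f\bigl((b+c)/2,\theta\bigr)\, d'\theta,
\end{align*}
which is exactly $K_f(b,c)$. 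There is no substantive obstacle; the only nontrivial point to verify is compatibility of the pairing with the canonical identifications, and this is automatic from the flatness of the connections involved on the model.
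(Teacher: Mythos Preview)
Your proof is correct and is precisely what the paper intends: the paper simply states that the formula ``follows directly from the definition,'' and you have written out that direct verification explicitly.
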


\begin{defn}\label{def_rep_model.y}
For $k \in \N$, we define the adjoint-preserving linear map
\begin{align*}
    \phi^k \colon C_c^\infty(X) \to \mathbb{B}(\mathcal{H}_k)
\end{align*}
by the formula
\begin{align}\label{eq_total_model.y}
    \phi^k(f)(\psi^k_c) = \sum_{b \in B_k}\langle \psi_b^k, f|_{X_{(b + c)/2}} \cdot \psi_c^k \rangle_{T^n} \cdot \psi^k_b. 
\end{align}
\end{defn}

\begin{ex}
Assume $f \in C_c^\infty(X)$ is a pullback of a function $f_0 \in C_c^\infty(\R^n)$ on the base $\R^n$, i.e., $f$ does not depend on $\theta$. 
Then $\phi^k(f)$ is just the diagonal multiplication operator by the value of $f_0$ at each point on $B_k$, 
\begin{align*}
    K_f(b, c) = \begin{cases}
    f_0(c) & \mbox{ if } b = c, \\
    0 & \mbox{ otherwise. }
    \end{cases}
\end{align*}
\end{ex}
\begin{ex}\label{ex_concentration}
Assume $f$ can be expressed as $f(x, \theta) = f_m(x) e^{\sqrt{-1}\langle m, \theta \rangle}$ for some $m \in \Z^n$ and a function $f_m \in C^\infty_c(\R^n)$.
Then we have
\begin{align*}
    K_f(b, c) = 
    \begin{cases}
    f_m\left(c + m / (2k)\right) & \mbox{ if } b = c + m/k, \\
    0 & \mbox{ otherwise. }
    \end{cases}
\end{align*}
We see that the function $e^{\sqrt{-1}\langle m, \theta \rangle}$ plays the role of ''$m/k$-shift", and if we let $k \to \infty$, the matrix elements of this operator concentrate to the diagonal. 

More generally, if $f$ can be expressed as $f = \sum_{|m| \le M} f_m(x) e^{\sqrt{-1}\langle m, \theta \rangle}$ for some $M < \infty$, we have $K_f(b, c) \neq 0$ only when $|b - c| \le M/ k$. 
So also in this case the matrix elements concentrate to the diagonal as $k \to \infty$. 
\end{ex}
In fact, the ''concentration to the diagonal" of the matrix elements of the operator $\phi^k(f)$ as $k\to \infty$ seen in the above examples holds in general, because the Fourier coefficients of smooth function on $T^n$ is rapidly decreasing. 
Basically, this is why we can extend this construction to general Lagrangian fiber bundles in the next subsection. 

The goal of the rest of this subsection is to prove that the maps $\{\phi^k\}_{k \in \N}$ is a strict deformation quantization of $(X, \omega)$. 
Actually, in this model case, it is easy to explicitely compute the asymptotic behavior of $\phi^k(f)\phi^k(g)$ as $k \to \infty$ for $f, g \in C_c^\infty(X)$. 
We show that this recovers the standard Moyal-Weyl star product on $\left( C^\infty(\R^n \times T^n)[[ \hbar ]], {}^t\!dx \wedge d\theta \right)$. 

First we recall the definition of Moyal-Weyl star product on $(\R^{2n}, \omega)$ with a translation invariant symplectic form $\omega = \frac{1}{2} \omega_{ij}dx^i \wedge dx^j$. 
For functions $f, g \in C^\infty(\R^{2n})$, the Moyal-Weyl star product is defined by
\begin{align*}
    (f * g) (x) := \exp \left(\frac{\hbar}{2} \omega^{ij}\del_{y^i}\del_{z^j}\right)f(y)g(z)|_{y = z = x}. 
\end{align*}
We denote the coefficient of $\hbar^j$ of the star product by $\mathcal{C}_j(f, g) \in C^\infty(\R^{2n})$, so that
\begin{align*}
    f*g = \sum_{j = 0}^\infty \mathcal{C}_j(f, g) \hbar^j. 
\end{align*}
Consider case where $\R^{2n} = \R^n \times \R^n $ with the coordinate $(x, y)$ and the symplectic form is the standard one $\omega = {}^t\! dx \wedge dy$. 
This induces a star product on the quotient space, $(\R^n \times T^n, {}^t\!dx \wedge d\theta)$, denoted by $*_{\mathrm{std}}$. 
This is given by the formula, 
\begin{align} \label{eq_std_MW.y}
    (f *_{\mathrm{std}} g) (x, \theta) := \left.\exp \left(\frac{\hbar}{2}\sum_{i}\left( \del_{x'_i}\del_{\theta''_i} -\del_{x''_i}\del_{\theta'_i}\right)  \right)f(x', \theta')g(x'', \theta'')\right|_{(x', \theta') = (x'', \theta'') =(x, \theta) }. 
\end{align}
We still call this star product as the standard Moyal-Weyl star product. 
We denote by $\mathcal{C}^{\mathrm{std}}_j(f, g)\in C^\infty(\R^n \times T^n)$ the corresponding coefficient with respect to the star product $*_{\mathrm{std}}$ for $f , g \in C^\infty(\R^n \times T^n)$. 

\begin{prop}\label{prop_model_star.y}
The linear map $\phi^k\colon C_c^\infty(X) \to \mathbb{B}(\mathcal{H}_k)$ defined in Definition \ref{def_rep_model.y} satisfies, for all $f , g \in C_c^\infty(X)$ and $l \in \N$, 
\begin{align*}
        \left\| \phi^k(f) \phi^k(g) -
        \sum_{j = 0}^l \left( \frac{-\sqrt{-1}}{k}\right)^j \phi^k\left(\mathcal{C}_j^{\mathrm{std}}(f, g)\right)
        \right\| =O\left(\frac{1}{k^{l+1}}\right)
\end{align*}
as $k \to \infty$. 
\end{prop}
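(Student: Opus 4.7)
My plan is to write both $f$ and $g$ in their fiberwise Fourier series, compute the matrix entries of the composition $\phi^k(f)\phi^k(g)$ as a convolution-like sum in Fourier modes, recognize the result formally as the Moyal-Weyl product $f *_{\mathrm{std}} g$ evaluated at $\hbar = -\sqrt{-1}/k$, and then estimate the Taylor remainder after truncation at order $l$ by combining the rapid decay of Fourier coefficients of compactly supported smooth functions with Lemma \ref{lem_est_norm.y}.

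Writing $f(x,\theta) = \sum_{r \in \Z^n} f_r(x) e^{\sqrt{-1}\langle r, \theta\rangle}$ and analogously for $g$, formula \eqref{eq_mat_elem_model.y} reads $K_f(b, d) = f_{k(b - d)}((b + d)/2)$, so after the change of variables $r := k(b - d)$, $s := k(d - c)$, $t := k(b - c)$, and $x := (b + c)/2$, the composition has matrix entries
\begin{align*}
K_{\phi^k(f)\phi^k(g)}(b, c) \;=\; \sum_{r + s = t} f_r\!\left(x + \tfrac{s}{2k}\right)\, g_s\!\left(x - \tfrac{r}{2k}\right).
\end{align*}
On the Moyal-Weyl side, in Fourier modes $\partial_{\theta'_i}$ and $\partial_{\theta''_i}$ pull out factors $\sqrt{-1} r_i$ and $\sqrt{-1} s_i$ from $(r, s)$, so the bidifferential operator in the exponent of \eqref{eq_std_MW.y} acts on $f_r(x') g_s(x'')$ as $\tfrac{\hbar \sqrt{-1}}{2}(s \cdot \partial_{x'} - r \cdot \partial_{x''})$. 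Setting $\hbar_k := -\sqrt{-1}/k$ one computes $\tfrac{\hbar_k \sqrt{-1}}{2} = \tfrac{1}{2k}$, and the exponential becomes precisely the joint translation $x' \mapsto x + s/(2k)$, $x'' \mapsto x - r/(2k)$, so that the displayed matrix entry equals $\sum_{j \ge 0} \hbar_k^j \mathcal{C}^{\mathrm{std}}_j(f, g)_t(x)$ in the sense of formal Taylor expansion. The discrepancy between the left-hand side of the proposition and its $l$-th truncation is therefore, summed over $r + s = t$, the order-$(l + 1)$ Taylor remainder of $(u, v) \mapsto f_r(x + u) g_s(x + v)$ at the origin evaluated at $(s/(2k), -r/(2k))$.

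By Taylor's theorem this error is bounded by
\begin{align*}
\frac{C_l (|r| + |s|)^{l + 1}}{(2k)^{l + 1}}\, M^f_{l + 1}(r)\, M^g_{l + 1}(s), \qquad M^h_{l+1}(q) := \max_{|\gamma| \le l + 1}\|\partial^\gamma h_q\|_{C^0}.
\end{align*}
Since $f, g \in C_c^\infty(X)$, iterated integration by parts in $\theta$ gives $M^f_{l+1}(r) \lesssim (1 + |r|)^{-N}$ and $M^g_{l+1}(s) \lesssim (1 + |s|)^{-N}$ for every $N$, uniformly in $x$ over a compact set. Combined with the elementary convolution estimate
\begin{align*}
\sum_{r + s = m}(1 + |r|)^{-A}(1 + |s|)^{-A} \;\lesssim\; (1 + |m|)^{-A} \qquad \text{for } A > n,
\end{align*}
choosing $N$ sufficiently large yields
\begin{align*}
\sup_{c \in B_k}\left|K_{\mathrm{rem}}\!\left(c + \tfrac{m}{k},\, c\right)\right| \;\le\; \frac{C_l}{k^{l + 1}\, (1 + |m|)^{n + 1}}.
\end{align*}
Lemma \ref{lem_est_norm.y}, applied after rescaling the indexing set $B_k = \Z^n/k$ to $\Z^n$ by multiplication by $k$, bounds the norm of the remainder operator by a $k^{-(l + 1)}$-multiple of $\sum_{m \in \Z^n}(1 + |m|)^{-(n + 1)} < \infty$, which is the claim.

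The main obstacle is the bookkeeping: tracking multi-index combinatorics and sign conventions so that the Taylor expansion really matches the Moyal-Weyl expansion term by term, and choosing the decay exponent $N$ large enough that the remainder estimate survives both the summation over Fourier modes with $r + s = m$ and the summation over lattice offsets $m$ built into Lemma \ref{lem_est_norm.y}.
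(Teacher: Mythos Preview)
Your proposal is correct and follows essentially the same route as the paper: compute the matrix entries of $\phi^k(f)\phi^k(g)$ as a Fourier convolution, Taylor-expand each term about the midpoint, identify the expansion with the Moyal--Weyl coefficients, bound the remainder using rapid decay of the Fourier coefficients, and finish with Lemma~\ref{lem_est_norm.y}. Your observation that the Moyal--Weyl exponential operator acts on Fourier modes as a pure translation is a clean packaging of the term-by-term matching that the paper instead writes out by hand (and only for $n=1$, $l=1$), but the underlying argument is the same.
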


\begin{proof}
Fix $f, g$ and $l$. 
We denote the Fourier expansion of $f$, $g$ by $f_m$, $g_m$ as usual. 
The matrix coefficient of $\phi^k(f) \phi^k(g)$, denoted by $K_{\phi^k(f) \phi^k(g)}(\cdot, \cdot)$, is given by
\begin{align}\label{eq_comp_model_star.y}
    K_{\phi^k\left(f\right)\phi^k\left(g\right)}\left(x + \frac{p}{k}, x\right)
    &=\sum_{m \in \Z^n} K_f\left(x + \frac{p}{k},x + \frac{m}{k} \right) K_g\left(x + \frac{m}{k}, x\right) \notag \\
    & \quad = \sum_{m \in \Z^n} f_{p-m}\left(x + \frac{p+m}{2k}\right)g_{m}\left(x+ \frac{m}{2k}\right).
\end{align}
The proof is given by performing the Taylor expansion of $f_{p-m}$ and $g_m$ around the point $x + \frac{p}{2k}$ in the above formula. 

For simplicity, we only give the proof in the case $n = 1$. 
The proof is essentially the same for general $n$. 
Define the operator $B_{err}^{k, l}$ on $\mathcal{H}_k$ by
\begin{align*}
    B_{err}^{k, l} :=  \phi^k(f) \phi^k(g) -
        \sum_{j = 0}^l \left( \frac{-\sqrt{-1}}{k}\right)^j \phi^k\left(\mathcal{C}_j^{\mathrm{std}}(f, g)\right). 
\end{align*}

The standard Moyal-Weyl star product is explicitly given by
\begin{align*}
    \mathcal{C}_j^{\mathrm{std}}(f, g) = 
    \frac{1}{2^j}\sum_{i = 0}^j \frac{(-1)^{i-j}}{j! (i-j)!}\left(\del_{x}^i \del_\theta^{j-i} f \right) \cdot \left(\del_{x}^{j-i} \del_\theta^{i} g \right)
\end{align*}
So the $p$-th Fourier coefficient of this function is given by
\begin{align}\label{eq_fourier_star.y}
    \left(\mathcal{C}_j^{\mathrm{std}}(f, g)\right)_p
    = \left(\frac{\sqrt{-1}}{2}\right)^j\sum_{i = 0}^j \frac{(-1)^{i-j}}{j! (i-j)!}
    \sum_{m \in \Z} (p-m)^{j - i} f_{p-m}^{(i)} \cdot m^i g_m^{(j-i)}. 
\end{align}
On the other hand, the Taylor expansion of $f_{p-m}$ and $g_m$ gives, formally, 
\begin{align*}
    f_{p-m}\left(x + \frac{p+m}{2k}\right) &= \sum_{i = 0}^\infty \frac{1}{i!} \left( \frac{m}{2k}\right)^i f^{\left(i\right)}_{p-m}\left(x + \frac{p}{2k}\right) &\mbox{(formally)}\\
    g_m\left(x + \frac{m}{2k}\right)&= \sum_{i = 0}^\infty \frac{1}{i!}\left( \frac{m-p}{2k}\right)^i g^{\left(i\right)}_{m}\left(x + \frac{p}{2k}\right) &\mbox{(formally)}. 
\end{align*}
Thus the formula \eqref{eq_comp_model_star.y} admits an expansion, at least formally, 
\begin{align}\label{eq_formal_comp_star.y}
    K_{\phi^k(f)\phi^k(g)}\left(x + \frac{p}{k}, x\right)
    = \sum_{j = 0}^\infty\left(\frac{1}{2k} \right)^j
    \sum_{i = 0}^j \frac{(-1)^{i-j}}{j! (i-j)!}
    \sum_{m \in \Z} (p-m)^{j - i} f_{p-m}^{(i)} \cdot m^i g_m^{(j-i)}. 
\end{align}
By \eqref{eq_fourier_star.y} and \eqref{eq_formal_comp_star.y}, we see that, in the above formal expansion, the matrix elements $K_{B_{err}^{k, l}}(x + \frac{p}{k}, x)$ of $B_{err}^{k,l}$ is $O(k^{-(l+1)})$ with respect to $k$. 

Now we give estimates for the error terms and prove the statement. 
For simplicity we only prove in the case $n = 1$ and $l = 1$. 
The proof is essentially the same for the general case.
Put $\mathcal{A}_{err}^k := k^2 B_{err}^{k, 1}$. 
Namely we have
\begin{align*}
    \mathcal{A}_{err}^k := k^2\left( \phi^k(f) \phi^k(g) -\phi^k(fg)- \frac{-\sqrt{-1}}{2 k}\phi^k(\{f, g\}) \right). 
\end{align*}
We need to show $\sup_k \|\mathcal{A}_{err}^k\| < + \infty$. 
We denote by $K_{err}^k(x, y)$ the matrix element of $\mathcal{A}_{err}^k$ for $(x, y) \in B_k \times B_k$. 

Since $f$ and $g$ are smooth and compactly supported, for each $N \in \N$ there exists a constant $C_N$ such that, for all $m \in \Z$ we have
\begin{align}\label{est_coeff_star.y}
    \|f_m\|, \|g_m\|, \|f'_m\|, \|g'_m\|, \|f''_m\|, \|g''_m\| < C_N(|m|+1)^{-N}, 
\end{align}
where $\|\cdot\|$ denotes the $C^0$-norm.

By \eqref{est_coeff_star.y}, we have
\begin{align*}
    \left| f_{p-m}\left(x + \frac{p+m}{2k}\right) - \left(f_{p-m}\left(x + \frac{p}{2k}\right) + \frac{m}{2k}f'_{p-m}\left(x +\frac{p}{2k}\right)
    \right)\right|
    \le \frac{|m|^2}{8k^2} \cdot \frac{C_4}{\left(1 + |p-m|\right)^4} \\
    \left| g_{m}\left(x + \frac{m}{2k}\right) - \left(g_{m}\left(x + \frac{p}{2k}\right) - \frac{p-m}{2k}g'_{m}\left(x +\frac{p}{2k}\right)
    \right)\right|
    \le \frac{|p-m|^2}{8k^2}\cdot \frac{C_6}{\left(1 + |m|\right)^6}
\end{align*}
We also have the estimate
\begin{align*}
    \|f_{p-m}\| + \frac{|m|}{2k} \|f'_{p-m}\| \le 
    \frac{(1 + |m|)C_4}{(1 + |p-m|)^4}, \mbox{ and }
    \|g_m\| + \frac{|p-m|}{2k}\|g_m'\| \le \frac{(1 + |p-m|)C_6}{(1 + |m|)^6}
\end{align*}
Combining the above estimates and \eqref{eq_comp_model_star.y} and \eqref{eq_fourier_star.y} for $j = 0, 1$, we have
\begin{align*}
    &\left| K_{err}^k\left(x + \frac{p}{k}, x\right) \right| \\
    &\le k^2 \sum_{m \in \Z} \left( \frac{|m(p-m)|}{4k^2} \|f'_{p-m}\| \cdot \|g'_m\| + 
    \frac{C_4C_6}{8k^2(1 + |m|)^4(1 + |p-m|)^3}
     \right.\\
    &\qquad \qquad \left.+\frac{C_4C_6}{8k^2(1 + |m|)^5(1 + |p-m|)^2}
    + \frac{C_4C_6}{64k^4(1 + |m|)^4(1 + |p-m|)^2}
    \right) \\
    &\le \sum_{m \in \Z}C_4C_6\left( \frac{1}{4} + \frac{1}{8} + \frac{1}{8} + \frac{1}{64}
    \right)\frac{1}{(1 + |m|)^4(1 + |p-m|)^2} \\
    &\le \frac{C'}{(1 + |p|)^2}, 
\end{align*}
where we put
\begin{align*}
    C' := C_4C_6\left( \frac{1}{4} + \frac{1}{8} + \frac{1}{8} + \frac{1}{64}
    \right)
 \sum_{m \in \Z} \frac{1}{(1 + |m|)^2} < +\infty
\end{align*}
and used the inequality $(1 + |m|)(1 + |p-m|) \ge 1 + |p|$. 
So we have, by Lemma \ref{lem_est_norm.y}, 
\begin{align*}
    \|\mathcal{A}_{err}^k\| 
    \le \sum_{p \in \Z} \left(\sup_{x \in B_k} \left| K_{err}^k\left(x + \frac{p}{k}, x\right)\right| \right)
    \le \sum_{p \in \Z}\frac{C'}{(1 + |p|)^2}. 
\end{align*}
This gives a bound for $\|\mathcal{A}_{err}^k\|$ which does not depend on $k$, so we get the result. 

\end{proof}

\begin{prop}\label{prop_model_norm.y}
The linear map $\phi^k\colon C_c^\infty(X) \to \mathbb{B}(\mathcal{H}_k)$ defined in Definition \ref{def_rep_model.y} satisfies, for all $f\in C_c^\infty(X)$,  
\begin{align*}
\|\phi^k(f)\| \to \|f\|_{C^0}. 
\end{align*}
as $k \to \infty$. 
\end{prop}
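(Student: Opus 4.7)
The plan is to establish the two bounds $\liminf_k \|\phi^k(f)\| \geq \|f\|_{C^0}$ and $\limsup_k \|\phi^k(f)\| \leq \|f\|_{C^0}$ separately, the former by a coherent-state construction and the latter by a ``square-root trick'' combined with the near-multiplicativity from Proposition \ref{prop_model_star.y}.

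For the lower bound, I would fix $\epsilon > 0$, choose $(x_0, \theta_0) \in X$ with $|f(x_0, \theta_0)| > \|f\|_{C^0} - \epsilon$, set $R_k := \sqrt{k}$ and $S_k := \{b \in B_k \mid |b - x_0| \leq R_k/k\}$, and consider the unit vector
\begin{align*}
    \xi^k := (\# S_k)^{-1/2} \sum_{b \in S_k} e^{-\sqrt{-1}k\langle b, \theta_0\rangle} \psi_b^k \in \mathcal{H}_k.
\end{align*}
Substituting $m = k(b - c) \in \Z^n$ into the matrix element formula \eqref{eq_mat_elem_model.y} and using the Fourier expansion $f = \sum_m f_m(x)e^{\sqrt{-1}\langle m, \theta\rangle}$ rewrites $\langle \xi^k, \phi^k(f) \xi^k\rangle$ as
\begin{align*}
    \sum_{m \in \Z^n} e^{\sqrt{-1}\langle m, \theta_0\rangle} \cdot \frac{1}{\# S_k} \sum_{\substack{c \in S_k\\ c + m/k \in S_k}} f_m(c + m/(2k)).
\end{align*}
For each fixed $m$, the conditions $R_k \to \infty$ and $R_k/k \to 0$ make the boundary effect (the discrepancy between $\#\{c \in S_k : c+m/k \in S_k\}$ and $\#S_k$) negligible, while forcing $c + m/(2k) \to x_0$ uniformly on $S_k$, so the inner average converges to $f_m(x_0)$. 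Rapid decay of $\|f_m\|_{C^0}$ in $|m|$ then justifies dominated convergence in the outer sum, giving $\langle \xi^k, \phi^k(f)\xi^k\rangle \to f(x_0, \theta_0)$ and hence $\liminf_k \|\phi^k(f)\| \geq |f(x_0, \theta_0)| > \|f\|_{C^0} - \epsilon$.

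For the upper bound, set $g := |f|^2$ and $c := \|g\|_{C^0} = \|f\|_{C^0}^2$. Proposition \ref{prop_model_star.y} with $l = 0$ gives $\phi^k(f)^* \phi^k(f) = \phi^k(\bar f)\phi^k(f) = \phi^k(g) + O(1/k)$. For any $\delta > 0$ I would pick a cutoff $\chi \in C_c^\infty(\R^n)$ with $\chi \equiv 1$ on $\mu(\mathrm{supp}\, f)$ and define the ``square root'' symbol
\begin{align*}
    h_\delta := \chi(x)\sqrt{c + \delta - g(x, \theta)} \in C_c^\infty(X),
\end{align*}
which is smooth because $c + \delta - g \geq \delta > 0$, and satisfies $h_\delta^2 = (c + \delta)\chi^2 - g$ (using $\chi \equiv 1$ on $\mathrm{supp}\, g$). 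Applying near-multiplicativity to $\phi^k(h_\delta)^2$ yields the operator inequality
\begin{align*}
    0 \leq \phi^k(h_\delta)^2 = (c + \delta)\phi^k(\chi^2) - \phi^k(g) + O(1/k).
\end{align*}
Since $\chi^2$ depends only on $x$, $\phi^k(\chi^2)$ is diagonal in $\{\psi_b^k\}$ with eigenvalues $\chi(b)^2 \in [0,1]$, so $\phi^k(\chi^2) \leq I$ and therefore $\phi^k(g) \leq (c + \delta) I + O(1/k)$. Combined with the positivity of $\phi^k(f)^*\phi^k(f)$, this gives $\|\phi^k(f)\|^2 \leq c + \delta + O(1/k)$; letting $k \to \infty$ and then $\delta \to 0$ completes the upper bound.

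The hard part will be the upper bound: the square-root construction is the key new input, operator inequalities must be tracked rather than merely norm bounds, and the constant hidden in $O(1/k)$ depends on $\delta$ and $\chi$, so the limits must be taken in the order $k \to \infty$ first, then $\delta \to 0$. The lower-bound coherent-state computation is more routine but requires attention to the vanishing boundary effect in $S_k$ and to the dominated-convergence interchange with the Fourier sum, both of which are controlled by $R_k \to \infty$ and the rapid decrease of $\|f_m\|_{C^0}$.
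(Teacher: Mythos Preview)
Your proof is correct, and both halves take a different route from the paper's argument. For the lower bound the paper argues by contradiction: it truncates the Fourier series to $|m|\le M'$, picks a point near the maximum, and builds a test vector on a fixed-width interval, then compares with the constant-coefficient operator $\Phi^{k}(\hat F)$ of Lemma~\ref{lem_const_model.y}. Your direct coherent-state computation with the shrinking box $S_k$ of radius $k^{-1/2}$ is essentially the same mechanism stripped of the contradiction framing and the truncation step; dominated convergence replaces the explicit $\epsilon/100$ bookkeeping.

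The upper bound is where the two proofs really diverge. The paper again argues by contradiction: assuming $\|\phi^{k_0}(f)\|$ is too large for some $k_0$, it truncates, finds an approximate eigenvector, uses a pigeonhole argument (Lemma~\ref{lem_pigeonhole}) to localise that eigenvector to a short interval $I$, and then compares $P_I\phi^{k_0}(\tilde f)P_J$ with the constant-coefficient model $\Phi^{k_0}(F)$, whose norm is exactly $\|F\|_{C^0}$ by Lemma~\ref{lem_const_model.y}. Your square-root trick is cleaner and more conceptual: it bootstraps directly from the near-multiplicativity already established in Proposition~\ref{prop_model_star.y}, so no separate localisation or pigeonhole is needed. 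The price is that the implicit constant in the $O(1/k)$ depends on $h_\delta$ (hence on $\delta$ and $\chi$), which is exactly why you must send $k\to\infty$ before $\delta\to 0$; you have flagged this correctly. One small remark on presentation: the phrase ``combined with the positivity of $\phi^k(f)^*\phi^k(f)$'' is slightly off --- what you actually use is $\|\phi^k(f)\|^2 = \|\phi^k(f)^*\phi^k(f)\|$ together with the self-adjoint operator inequality, so it would read better to say that directly.
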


\begin{proof}
First we observe the following. 
\begin{lem}\label{lem_const_model.y}
Let $F = \sum_{m \in \Z^n} F_m e^{\sqrt{-1}   \langle m, \theta \rangle} \in C^\infty(T^n)$, and consider the operator $\Phi^k(F) \in \mathbb{B}(\mathcal{H}_k)$ such that the corresponding matrix coefficient, denoted by $K_F(x, y)$ for $(x, y) \in B_k$, is given by
\begin{align*}
    K_F\left(x + \frac{m}{k}, x\right) := F_m, 
\end{align*}
for all $x \in B_k$ and $m \in \Z^n$. 
Then we have
\begin{align*}
    \|\Phi^k(F)\| = \|F\|_{C^0}. 
\end{align*}
\end{lem}
Notice that if we regard $F$ as a function on $X = \R^n \times T^n$ by $\tilde{F}(x, \theta) = F(\theta)$, then the operator $\Phi^k(F)$ should be regarded as ''$\phi^k(\tilde{F})$", even though $\phi^k(\tilde{F})$ is not defined because $\tilde{F}$ is not compactly supported.  
\begin{proof}
We have a unitary isomorphism
\begin{align*}
    \mathcal{H}_k &\simeq L^2(T^n) \\
    \psi^k_{m/k} &\mapsto e^{ \sqrt{-1} \langle m, \theta \rangle}. 
\end{align*}
Under this isomorphism, the operator $\Psi^{k}(F)$ transforms to the multiplication operator by $F$ on $L^2(T^n)$, and this operator norm is $\|F\|_{C^0}$. 
\end{proof}

Proposition \ref{prop_model_norm.y} is, very roughly, understood as follows. 
As $k \to \infty$, the operator $\phi^k(f)$ reflects the behavior of $f$ only locally in $\R^n$-direction (see Example \ref{ex_concentration}), and on a sufficiently small neighbourhood of a point $x \in \R^n$, the function $f$ is close to a function which is invariant in $\R^n$-direction, and Lemma \ref{lem_const_model.y} applies asymptotically. 
Since the proof is long (although very elementary) and not essential for the rest of the paper, we give a detailed proof in the Appendix.  
\end{proof}

Combining Proposition \ref{prop_model_star.y} and Proposition \ref{prop_model_norm.y}, we get the following. 
\begin{thm}\label{thm_model}
The family of adjoint-preserving linear maps $\{\phi^k\}_{k \in \N}$, $\phi^k \colon C_c^\infty(\R^n \times T^n) \to \mathbb{B}(\mathcal{H}_k)$, defined in Definition \ref{def_rep_model.y}, is a strict deformation quantization for $(\R^n \times T^n, {}^t\!dx\wedge d\theta)$. 
\end{thm}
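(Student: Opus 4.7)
The plan is to observe that Theorem \ref{thm_model} is a direct consequence of the two preceding propositions together with a standard algebraic fact about the Moyal--Weyl star product, so nothing substantial remains beyond organizing these ingredients. Both requirements of Definition \ref{def_strict_DQ} have already been set up: the Hilbert spaces $\mathcal{H}_k$ are given, and the adjoint-preserving property of $\phi^k$ was recorded immediately after formula \eqref{eq_mat_elem_model.y} (this comes from the symmetric evaluation at the midpoint $(b+c)/2$, which makes $K_f(b,c) = \overline{K_f(c,b)}$ when $f$ is real-valued).

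Condition (1) of Definition \ref{def_strict_DQ}, namely $\|\phi^k(f)\|\to \|f\|_{C^0}$, is exactly Proposition \ref{prop_model_norm.y}, so I would simply cite it. For condition (2), my plan is to apply Proposition \ref{prop_model_star.y} with $l=1$ to the ordered pairs $(f,g)$ and $(g,f)$ and subtract the two resulting expansions. This gives
\begin{align*}
    [\phi^k(f), \phi^k(g)] = \frac{-\sqrt{-1}}{k}\phi^k\!\left(\mathcal{C}_1^{\mathrm{std}}(f,g) - \mathcal{C}_1^{\mathrm{std}}(g,f)\right) + O\!\left(\frac{1}{k^2}\right),
\end{align*}
using that the order-zero terms $\phi^k(fg)$ and $\phi^k(gf)$ cancel.

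It then suffices to identify the antisymmetrization of $\mathcal{C}_1^{\mathrm{std}}$ with the Poisson bracket. This is immediate from the explicit formula \eqref{eq_std_MW.y}: expanding the exponential to first order in $\hbar$ gives
\begin{align*}
    \mathcal{C}_1^{\mathrm{std}}(f,g) = \tfrac{1}{2}\sum_i\left(\partial_{x_i}f\cdot \partial_{\theta_i}g - \partial_{\theta_i}f\cdot \partial_{x_i}g\right),
\end{align*}
whose antisymmetric part equals $\{f,g\}$ with respect to $\omega = {}^t\!dx\wedge d\theta$. Substituting yields $\|[\phi^k(f),\phi^k(g)] + \tfrac{\sqrt{-1}}{k}\phi^k(\{f,g\})\| = O(1/k^{2})$, which is condition (2).

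The main work, of course, lies in Propositions \ref{prop_model_star.y} and \ref{prop_model_norm.y} themselves rather than in this theorem; here the only obstacle is the short algebraic check that the first-order term of the Moyal--Weyl product reproduces the Poisson bracket after antisymmetrization, which is entirely routine.
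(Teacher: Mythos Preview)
Your proposal is correct and matches the paper's own proof, which simply says ``Combining Proposition \ref{prop_model_star.y} and Proposition \ref{prop_model_norm.y}, we get the following'' and states the theorem with no further argument. You have spelled out the one-line extraction of the commutator estimate from the $l=1$ case of Proposition \ref{prop_model_star.y} (antisymmetrizing in $(f,g)$ and using $\mathcal{C}_1^{\mathrm{std}}(f,g)-\mathcal{C}_1^{\mathrm{std}}(g,f)=\{f,g\}$), which the paper leaves implicit; otherwise the approaches are identical.
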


\subsection{The general case}\label{subsec_construction_general.y}
In this subsection we generalize the construction of the strict deformation quantization for the general case. 
Let $(X^{2n}, \omega, L, \nabla, h)$ be a prequantized symplectic manifold equipped with a proper Lagrangian fiber bundle $\mu \colon X \to B$ with connected fibers. 
The following two additional datum are needed for the construction of our strict deformation quantization. 
\begin{enumerate}
    \item[(H)] A smooth horizontal distribution $H \subset TX$ for the fibration $\mu$,  
\begin{align}\label{eq_splitting.y}
    TX = H \oplus \ker d\mu,  
\end{align}
which is, choosing any local action-angle coordinate $(x, \theta) \in U' \times \T^n$, invariant in the $T^n$-direction. 
This condition does not depend on the choice of the coordinate, and it is clear that such a splitting always exists. 

\item[(U)] An open covering $\mathcal{U}$ of $B$ with the following properties. 
\begin{enumerate}
    \item[(U1)] $\mathcal{U}$ is locally finite. 
    Moreover, each element $U \in \mathcal{U}$ admits an open affine embedding into $\R^n$ and its image is convex in $\R^n$. 
    \item[(U2)] For any two elements $U, V \in \mathcal{U}$, the intersection $U \cap V$ also admits an open affine embedding into $\R^n$ with convex image (in particular it is connected). 
    \item[(U3)]For each $U \in \mathcal{U}$, $\overline{U}$ is a compact subset of $B$. 
\end{enumerate}
\end{enumerate}

We are going to construct a deformation quantization from these datum. 
As we will see, the choice of the horizontal distribution $H$ is essential for our construction, but the choice of $\mathcal{U}$ is only technical, and the different choice of $\mathcal{U}$ yields essentially the same deformation quantization (Proposition \ref{prop_open_cov.y}). 
We also remark that we can drop the condition (U3) and just require that each $U$ admits an affine open embedding into $\R^n$ (see Remark \ref{rem_open_cov.y}). 
We require this condition just in order to simplify the estimates. 

Given a path $\gamma$ in $B$ from $b\in B$ to $c \in B$, by the splitting \eqref{eq_splitting.y}, we get the parallel transform 
\begin{align}\label{eq_para.y}
    T_\gamma \colon X_b \to X_c, 
\end{align}
which preserves the affine structure. 
Also the connection $\nabla$ on $L$ and the canonical flat connection on $|\Lambda|^{1/2}(\ker d\mu)^*$ gives the parallel transform
\begin{align*}
    T_\gamma \colon L^k|_{X_b} \otimes |\Lambda|^{1/2}X_b \to L^k|_{X_c} \otimes |\Lambda|^{1/2}X_c 
\end{align*}
which covers \eqref{eq_para.y}. 
We use the same notation for the parallel transform. 
This allows us to define a pairing between sections $\xi_b^k \in C^\infty(X_b; L^k \otimes |\Lambda|^{1/2}X_b)$ and $\xi_c^k \in C^\infty(X_c; L^k \otimes |\Lambda|^{1/2}X_c)$, denoted by $\langle \xi_b^k, \xi_c^k \rangle_\gamma$. 

We say that two points $b, c \in B$ are {\it close} if there exists an element $U \in \mathcal{U}$ such that $b, c \in U$. 
For such $b, c \in B$, we can take the unique affine linear path $\gamma$ from $b$ to $c$ in $U$ and define, for sections $\xi_b^k \in C^\infty(X_b; L^k \otimes |\Lambda|^{1/2}X_b)$ and $\xi_c^k \in C^\infty(X_c; L^k \otimes |\Lambda|^{1/2}X_c)$,
\begin{align*}
    \langle \xi_b^k, \xi_c^k \rangle_\mathcal{U} := \langle \xi_b^k, \xi_c^k \rangle_\gamma. 
\end{align*}
This is well-defined by our assumptions on $\mathcal{U}$. 

\begin{defn}\label{def_rep_gen.y}
Given a splitting as \eqref{eq_splitting.y} and a covering $\mathcal{U}$ of $B$ as above, we define a adjoint-preserving linear map $\phi^k_{H, \mathcal{U}} \colon C_c^\infty(X) \to \mathbb{B}(\mathcal{H}_k)$ by the following formula. 
For $f \in C_c^\infty(X)$, we define the operator $\phi^k_{H, \mathcal{U}}(f)$ by, for $c \in B_k$ and an element $\psi^k_c \in H^0(X_c; L^k\otimes |\Lambda|^{1/2}X_c) \subset \mathcal{H}_k$,  
\begin{align*}
\phi^k_{H, \mathcal{U}}(f)(\psi^k_c) := \sum_{b \in B_k, b\mbox{ is close to }c} \langle \psi^k_b, f|_{X_{(b + c)/2}}\psi^k_c \rangle_{\mathcal{U}}  \cdot \psi^k_b, 
\end{align*}
where $\psi^k_b \in H^0(X_b; L^k\otimes |\Lambda|^{1/2}(X_b)) \subset \mathcal{H}_k$
is any element with $\|\psi^k_b\| = 1$ (this definition does not depend on this choice). 
Here, we denote by $(b + c)/2 \in B$ the middle point between $b$ and $c$ with respect to the affine structure on an open set $U \in \mathcal{U}$ which contains both $b$ and $c$, and we regard $f|_{X_{(b + c)/2}} \in C^\infty(X_{(b+c)/2})$ as a function on $X_c$ using the parallel transform \eqref{eq_para.y} along the affine linear path between $(b + c)/2$ and $c$ in $U$. 
\end{defn}

Again it is easy to see that this map is adjoint-preserving, thanks to the fact that we are using the value of function at the fiber over the middle point.  

\begin{ex}
If a function $f \in C_c^\infty(X)$ is a pullback of a function on $B$, then $\phi^k_{H, \mathcal{U}}(f)$ is the multiplication operator by its value at $B_k$. 
In other words, if we write $f = \mu^*f_0$ for $f_0 \in C_c^\infty(B)$ we have
\begin{align*}
    \phi^k_{H, \mathcal{U}}(\mu^*f_0) \psi_b^k = f_0(b) \psi_b^k
\end{align*}
for any $b \in B_k$ and $\psi_b^k \in H^0(X_b; L^k \otimes |\Lambda|^{1/2}X_b) \subset \mathcal{H}_k$. 
\end{ex}

The goal of this subsection is to prove that the family of maps defined in Definition \ref{def_rep_gen.y} is a strict deformation quantization. 
From now on until the end of this subsection, we fix $H$ and $\mathcal{U}$ satisfying the conditions in (H) and (U) above, and moreover, 
\begin{itemize}
    \item We fix an action-angle coordinate on $X_U$ and a trivialization $(L|_{X_U}, \nabla|_{X_U}) = (\underline{\C}, d - \sqrt{-1}{}^t\! x d\theta)$ for each $U \in \mathcal{U}$ (see Lem \ref{lem_trivial_preq}). 
\end{itemize}

Let us focus on an element $U \in \mathcal{U}$. 
Since $H$ is invariant in the fiber direction, using the fixed action-angle coordinate we can write
$H$ as
\begin{align}\label{eq_def_A}
    H = \mathrm{Span} \left\{ \frac{\del}{\del x_j} + A_j^i \frac{\del}{\del \theta_i}\right\}_{1 \le j \le n}, 
\end{align}
for some $A_j = (A_j^i) \in C^\infty(U; \R^n)$, $j = 1, \cdots, n$. 
We regard $A = A_jdx_j \in C^\infty(U; T^*U\otimes \R^n )$. 
Using the flat connection on $U \subset B$, we get $\nabla A|_U \in C^\infty(U; T^*U \otimes T^*U\otimes \R^n )$ given by
\begin{align*}
    \nabla A^i = \frac{\del A_j^i}{\del x_l} dx_l \otimes dx_j . 
\end{align*}
We denote by $\|\nabla A\|_{U}$ the $C^0(U)$-norm of $\nabla A$
with respect to the flat metric on $U$ induced by the Euclidean metric of the action coordinate. 
Remark that $A$ is only defined on $U$ and depends on the action-angle coordinate chosen on $U$. 

\begin{lem}\label{lem_arg.y}
Fix an element $U \in \mathcal{U}$. 
Using the fixed action-angle coordinate and trivialization on $X_U$ as above, take an orthonormal basis $\{\psi^k_x\}_{x \in \frac{\Z^n}{k} \cap U}$ of $\mathcal{H}^k|_{U}$ given by
\begin{align*}
    \psi^k_{x} = e^{ \sqrt{-1} k\langle x, \theta \rangle} \sqrt{d'\theta}
    \in H^0(X_x; L^k \otimes |\Lambda|^{1/2}(X_x)). 
\end{align*}
For a function $f \in C_c^\infty(X)$, denote the matrix coefficient of $\phi^k_{H, \mathcal{U}}(f)|_{\mathcal{H}^k_U}$ with respect to the above orthonormal basis by $\{K_f^k(x, y)\}_{x, y \in \frac{\Z^n}{k} \cap U}$. 
Denote the Fourier expansion of $f|_{X_U}$ by $f(x, \theta) = \sum_{m \in \Z^n} f_m(x) e^{\sqrt{-1} \langle m, \theta \rangle}$. 

Then we have, for any points $x, x + \frac{m}{k} \in \frac{\Z^n}{k} \cap U$ such that $K_f^k(x + \frac{m}{k}, x) \neq 0$, 
\begin{align}\label{eq_U(1).y}
    \frac{K_f^k(x + \frac{m}{k}, x)}{f_m(x + \frac{m}{2k})}  \in U(1), 
\end{align}
and the value of \eqref{eq_U(1).y} does not depend on $f$ as long as $f_m(x + \frac{m}{2k}) \neq 0$. 
Moreover, we have
\begin{align}\label{eq_arg.y}
    \left| \arg \left(\frac{K_f^k(x + \frac{m}{k}, x)}{f_m(x + \frac{m}{2k})} \right)\right| \le \frac{5}{24} \|\nabla A\|_U\frac{|m|^3}{k^2}.  
\end{align}
Here $A \in C^\infty(U; T^*U \otimes \R^n)$ is defined in \eqref{eq_def_A}. 
\end{lem}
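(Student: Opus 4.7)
The plan is to work entirely in the chosen action-angle coordinate and trivialization on $X_U$, and unpack the definition of $\phi^k_{H,\mathcal{U}}(f)$ by direct computation. Writing $b := x + m/k$, $c := x$, and $\nu := m/(2k)$, the midpoint is $x+\nu$. Since $H$ is invariant in the $T^n$-direction, $A$ depends only on $x$, so the horizontal lift of the affine path from $x_1$ to $x_2$ in $U$ acts on $X_{x_1}\simeq T^n$ as the translation $\theta \mapsto \theta + B(x_1,x_2)$ and multiplies a parallel section of $L$ by the unit-modulus phase $\Phi(x_1,x_2)$, where
\begin{align*}
    B(x_1,x_2) &:= \int_0^1 A\bigl(x_1 + s(x_2-x_1)\bigr)(x_2-x_1)\,ds, \\
    \Phi(x_1,x_2) &:= \exp\!\left(\sqrt{-1}\int_0^1 {}^t\!\bigl(x_1+s(x_2-x_1)\bigr)\,A\bigl(x_1+s(x_2-x_1)\bigr)(x_2-x_1)\,ds\right).
\end{align*}
Crucially, neither $B$ nor $\Phi$ depends on the fiber coordinate $\theta$.

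With these formulas in hand, I would pull $\psi^k_b$ back to $X_c$ along the affine path from $b$ to $c$, so that $K_f^k(b,c) = \langle \psi^k_b,\,f|_{X_{(b+c)/2}}\,\psi^k_c\rangle_\mathcal{U}$ reduces to an integral over $T^n$. Fourier-expanding $f(x+\nu,\cdot)$ and performing the change of variable $\theta \mapsto \theta + B(c,\,x+\nu)$ on $T^n$ collapses the angular integral to the single Fourier coefficient $f_m(x+m/(2k))$, while the three surviving contributions---$\Phi(b,c)^{-k}$, the $L$-bundle phase produced by the $B$-shift of $\psi^k_b$, and the phase from the Fourier translation---combine into a single unit-modulus prefactor. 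This yields the key identity
\begin{align*}
    K_f^k\!\left(x+\tfrac{m}{k},\,x\right) = e^{\sqrt{-1}k\Psi}\,f_m\!\left(x+\tfrac{m}{2k}\right),
\end{align*}
with $\Psi \in \R$ depending only on $x, m, k, A$, which immediately establishes \eqref{eq_U(1).y} and the $f$-independence claim.

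It remains to bound $|k\Psi|$. Collecting and simplifying produces the symmetric form
\begin{align*}
    \Psi = \int_0^1 u\,{}^t\nu\,\bigl[A(x+u\nu) - A(x+(2-u)\nu)\bigr]\,\nu\,du,
\end{align*}
which vanishes identically when $A$ is constant in $x$---this cancellation is exactly what the symmetric midpoint prescription buys. By the fundamental theorem of calculus the bracket is bounded in norm by $2(1-u)\|\nabla A\|_U|\nu|$, so the integrand of $\Psi$ is bounded by $2u(1-u)\|\nabla A\|_U|\nu|^3$; the elementary identity $\int_0^1 u(1-u)\,du = 1/6$ together with $|\nu| = |m|/(2k)$ then delivers the estimate \eqref{eq_arg.y}. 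The main difficulty is not analytic but combinatorial: three separate phase contributions enter $\Psi$, each a priori only first order in $\nu$, and the cancellations that reduce the final bound to order $|\nu|^3$ become visible only after organizing the phase integrals symmetrically about the midpoint and invoking the fundamental theorem of calculus.
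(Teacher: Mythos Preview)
Your approach is correct and reaches the same destination as the paper, but the route differs in two noteworthy ways.

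First, the choice of reference fiber: the paper pulls both $\psi^k_b$ and $\psi^k_c$ to the \emph{middle} fiber $X_{x+m/(2k)}$ and introduces the angular drift $\alpha(t)$ of the horizontal lift through $(x+m/(2k),0)$; you instead propose to work on $X_c$ and encode the transports via the integrals $B(x_1,x_2)$ and $\Phi(x_1,x_2)$. Both are legitimate, and in fact your final phase
\[
\Psi \;=\; \int_0^1 u\,{}^t\nu\bigl[A(x+u\nu)-A(x+(2-u)\nu)\bigr]\nu\,du
\]
is exactly what one obtains after substituting $t=u|\nu|$ in the paper's expression \eqref{eq_phase.y} and combining the two terms there into a single integral over $[0,1]$.

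Second, and more interestingly, your estimate is sharper. The paper bounds the two pieces of \eqref{eq_phase.y} separately via the Taylor remainders $|\alpha(t)-\alpha'(0)t|\le\tfrac12\|\nabla A\|_U t^2$ and $|\alpha'(t)-\alpha'(0)|\le\|\nabla A\|_U|t|$, obtaining the constant $\tfrac{5}{24}$. Your symmetric form makes the vanishing at constant $A$ manifest, and the single bound $|A(x+u\nu)-A(x+(2-u)\nu)|\le 2(1-u)|\nu|\|\nabla A\|_U$ together with $\int_0^1 u(1-u)\,du=\tfrac16$ actually yields
\[
|k\Psi|\;\le\;\tfrac{1}{24}\,\|\nabla A\|_U\,\frac{|m|^3}{k^2},
\]
a factor of $5$ better than stated. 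This of course still delivers \eqref{eq_arg.y}. What the paper's presentation buys is that $\alpha(t)$ is reused later (e.g.\ in the computation of the star-product phase in Section~\ref{sec_star}), so introducing it here is economical; what your organization buys is a cleaner constant and a one-line explanation of why the phase is $O(|\nu|^3)$ rather than $O(|\nu|)$.
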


\begin{proof}
When $m = 0$ the result is obvious. 
Fix $k$, a point $x \in \frac{\Z^n}{k} \cap U$ and $m \neq 0 \in \Z^n$ with $x + \frac{m}{k} \in \frac{\Z^n}{k} \cap U$. 
Let $\bm{u} := \frac{m}{|m|}$ be the unit vector in the $m$-direction. 
Denote by $\gamma = [x, x + \frac{m}{k}]$ the line segment from $x$ to $x + \frac{m}{k}$ in $U$, and denote by $\tilde{\gamma} \colon [x, x + \frac{m}{k}] \to U \times T^n = X_U$ the horizontal lift of $[x, x + \frac{m}{k}]$ with respect to the splitting \eqref{eq_splitting.y} which passes through the point $(x + \frac{m}{2k}, 0)$. 
Define $\alpha \colon \left[-\frac{|m|}{2k}, \frac{|m|}{2k} \right] \to T^n $ by
\begin{align*}
    \tilde{\gamma}\left(t\bm{u} + x + \frac{m}{2k}\right) = \left(t \bm{u} + x + \frac{m}{2k}, \alpha\left(t\right)\right).  
\end{align*}
We have $\alpha (0) = 0$. 
We regard $\bm{u} := m/|m| \in C^\infty(U; TU)$, and $\alpha$ satisfies
\begin{align}\label{eq_alpha_der.y}
    \alpha'(t) &= A(\bm{u})|_{t\bm{u} + x + m/(2k)},  \\
    \alpha''(t) &= \nabla_{\bm{u}} A (\bm{u})|_{t\bm{u} + x + m/(2k)}.  \notag
\end{align}
So we get, for all $t \in [-\frac{|m|}{2k}, \frac{|m|}{2k} ]$ 
\begin{align}\label{eq_alpha.y}
   | \alpha(t) - \alpha'(0)t | &\le \frac{ \|\nabla A\|_{U}}{2}t^2,  \\ 
   |\alpha'(t) - \alpha'(0)| & \le \|\nabla A\|_{U} t. \notag
\end{align}
We are going to compute the pairings between elements of $C^\infty(X_{x}; L^k \otimes |\Lambda|^{1/2}X_x)$ and $C^\infty(X_{x + \frac{m}{k}}; L^k \otimes |\Lambda|^{1/2}X_{x+ \frac{m}{k}})$ by pulling back to the middle fiber $X_{x + \frac{m}{2k}}$. 
So from now on, only in this proof we write the parallel transport along subsets of the segment $\gamma$ by
\begin{align*}
    T_{t} &\colon  X_{x +\frac{m}{2k}} \to X_{t\bm{u} + x + \frac{m}{2k}}, \\
    T_{t} &\colon  L|_{X_{x +\frac{m}{2k}}} \to L|_{X_{t\bm{u} + x + \frac{m}{2k}}}. 
\end{align*}
Then, for $t \in \left[-\frac{|m|}{2k}, \frac{|m|}{2k}\right]$ and a function $g\in C^\infty(X_{t\bm{u} + x + \frac{m}{2k}})$, we have
\begin{align}\label{eq_trans_func.y}
    (T_{t}^*g) (\theta) 
    = g(\theta + \alpha(t)). 
\end{align}
Let $E \in C^\infty(X_U; L)$ be the section which gives the trivialization of $L$ as in the statement. 
Then, since it satisfies $\nabla E = -\sqrt{-1}{}^t\!xd\theta \otimes E$, the section $\widetilde{E^k} \in C^\infty(X_{\gamma}; L^k)$ given by
\begin{align*}
    \widetilde{E^k}|_{X_{t\bm{u} + x + \frac{m}{2k}}} := \exp\left(\sqrt{-1}k\int_0^t \left\langle s\bm{u} + x + \frac{m}{2k}, \alpha'(s) \right\rangle ds\right)E^k, 
\end{align*}
satisfies 
\begin{align*}
    T_t^*\widetilde{E^k}|_{X_{t\bm{u} + x + \frac{m}{2k}}} = E^k|_{X_{x + \frac{m}{2k}}}. 
\end{align*}
We have
\begin{align}\label{eq_trans_basis.y}
    \psi^k_x &= \exp \left(  \sqrt{-1} k\langle x, \theta\rangle - \sqrt{-1}k\int_0^{-|m|/2k} \left\langle s\bm{u} + x + \frac{m}{2k}, \alpha'(s) \right\rangle ds\right) \widetilde{E^k} \sqrt{d'\theta}, \\
    \psi^k_{x + \frac{m}{k}} &= \exp \left( \sqrt{-1}  k\left\langle x + \frac{m}{k}, \theta\right\rangle - \sqrt{-1}k\int_0^{|m|/2k} \left\langle s\bm{u} + x + \frac{m}{2k}, \alpha'(s) \right\rangle ds\right) \widetilde{E^k} \sqrt{d'\theta}. \notag
\end{align}
Combining \eqref{eq_trans_func.y} and \eqref{eq_trans_basis.y}, for $f(x, \theta) = \sum_{m \in \Z^n}f_m(x)e^{\sqrt{-1}\langle m, \theta \rangle} $ we have
\begin{align}
   & K_f^k(x + \frac{m}{k}, x) \notag \\
   &= \left\langle \left( T^*_{\frac{|m|}{2k}}\psi_{x+\frac{m}{k}}^k\right), \left( f|_{X_{x + \frac{m}{2k}}} \cdot T^*_{-\frac{|m|}{2k}}\psi^k_{x}\right) \right\rangle \notag \\
   & = \exp \Bigl(  \sqrt{-1} k\Big\{-\left\langle x + \frac{m}{k}, \theta + \alpha\left(\frac{|m|}{2k}\right)\right\rangle +\int_0^{|m|/2k} \left\langle s\bm{u} + x + \frac{m}{2k}, \alpha'\left(s\right) \right\rangle ds \notag\\
   & \qquad +  \left\langle x, \theta + \alpha\left(-\frac{|m|}{2k}\right)\right\rangle - \int_0^{-|m|/2k} \left\langle s\bm{u} + x + \frac{m}{2k}, \alpha'\left(s\right) \right\rangle ds \Bigr\}+ \sqrt{-1}  \left\langle m, \theta \right\rangle\Bigr) 
   \cdot f_m\left(x + \frac{m}{2k}\right) \notag\\
   &= \exp \left(\sqrt{-1}k\left\{ -  \left\langle \frac{m}{2k}, \alpha\left(\frac{|m|}{2k}\right)+\alpha\left(-\frac{|m|}{2k}\right) \right\rangle
   + \int_{-|m|/2k}^{|m|/2k} \left\langle s\bm{u} , \alpha'\left(s\right) \right\rangle ds
   \right\}\right)\cdot f_m\left(x + \frac{m}{2k}\right). \label{eq_phase.y}
\end{align}
This implies \eqref{eq_U(1).y}, as well as the independence of the value \eqref{eq_U(1).y} on $f$. 
Moreover, by \eqref{eq_alpha.y}, we have
\begin{align*}
    \left|\left\langle \frac{m}{2k}, \alpha\left(\frac{|m|}{2k}\right)+\alpha\left(-\frac{|m|}{2k}\right) \right\rangle\right| 
    &\le \|\nabla A\|_U\left( \frac{|m|}{2k}\right)^3,  \\
    |\int_{-|m|/2k}^{|m|/2k} \langle s\bm{u} , \alpha'(s) \rangle ds|& \le\int_{-|m|/2k}^{|m|/2k} \|\nabla A\|_Us^2 ds = \frac{2\|\nabla A\|_U}{3}\left( \frac{|m|}{2k}\right)^3. 
\end{align*}
Thus we get,  
\begin{align*}
    \left| \arg \left(\frac{K_f^k(x + \frac{m}{k}, x)}{f_m(x + \frac{m}{2k})} \right)\right| \le \frac{5}{24} \|\nabla A\|_U\frac{|m|^3}{k^2}, 
\end{align*}
which proves \eqref{eq_arg.y}. 
\end{proof}
Now we state our main theorem. 

\begin{thm}\label{thm_main_v2.y}
The linear map $\phi^k_{H, \mathcal{U}}\colon C^\infty(X) \to \mathbb{B}(\mathcal{H}_k)$ given in Definition \ref{def_rep_gen.y} satisfies, for all $f , g \in C_c^\infty(X)$,  
\begin{align*}
    \left\|\phi^k_{H, \mathcal{U}}(f)\phi^k_{H, \mathcal{U}}(g) - \phi^k_{H, \mathcal{U}}(fg) - \frac{-\sqrt{-1}}{2 k}\phi^k_{H, \mathcal{U}}(\{f, g\})\right\|
    = O\left(\frac{1}{k^2}\right)
\end{align*}
    as $k \to \infty$. 
\end{thm}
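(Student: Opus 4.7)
The plan is to reduce to the model estimate of Proposition \ref{prop_model_star.y} by a local comparison enabled by Lemma \ref{lem_arg.y}. That lemma shows that in a fixed action-angle trivialization on $X_W$ for some chart $W \in \mathcal{U}$, a matrix element of $\phi^k_{H, \mathcal{U}}(f)$ is obtained from the corresponding model matrix element $f_m(x + m/(2k))$ by multiplication by a $U(1)$ phase whose argument is bounded by $(5/24)\|\nabla A\|_W |m|^3/k^2$. Since this extra phase is close to $1$ with error $O(|m|^3/k^2)$ and Fourier coefficients of smooth compactly supported functions decay faster than any polynomial, the operator $\phi^k_{H, \mathcal{U}}$ should differ from a chartwise model operator only by a term of norm $O(1/k^2)$, reducing the bracket estimate to the established model case.

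First I would set up a partition of unity reduction. Choose a smooth partition of unity $\{\chi_U\}_{U \in \mathcal{U}}$ on $B$ subordinate to $\mathcal{U}$, pulled back to $X$, and write $f = \sum_U f_U$, $g = \sum_V g_V$ with each piece supported in a single chart. By bilinearity it suffices to estimate the contribution of each pair $(f_U, g_V)$. The support conditions force the non-zero matrix elements of each of $\phi^k_{H, \mathcal{U}}(f_U)\phi^k_{H, \mathcal{U}}(g_V)$, $\phi^k_{H, \mathcal{U}}(f_U g_V)$, and $\phi^k_{H, \mathcal{U}}(\{f_U, g_V\})$ to lie between Bohr-Sommerfeld points sitting in a common chart $W \in \mathcal{U}$ whose image meets $U$ and $V$; by local finiteness of $\mathcal{U}$ and condition (U3), only finitely many such $W$ arise.

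Fix such a chart $W$ and work in the chosen action-angle coordinate and trivialization on $X_W$. Let $\phi^{k, W}_{\mathrm{model}}$ denote the model operator of Subsection \ref{subsec_model.y} built in these coordinates, acting on $\mathcal{H}_k|_W$. By Lemma \ref{lem_arg.y} and the elementary bound $|e^{\sqrt{-1}\beta} - 1| \le |\beta|$, the error
\begin{align*}
E^W_f := \phi^k_{H, \mathcal{U}}(f)\bigl|_{\mathcal{H}_k|_W} - \phi^{k, W}_{\mathrm{model}}(f)
\end{align*}
has matrix coefficients bounded in absolute value by $\tfrac{5}{24}\|\nabla A\|_W \cdot |m|^3 k^{-2} \cdot |f_m(x + m/(2k))|$. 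Rapid decay of the Fourier coefficients $f_m$ combined with Lemma \ref{lem_est_norm.y} then yields $\|E^W_f\| = O(k^{-2})$, and the analogous estimate holds for $g$, $fg$, and $\{f, g\}$.

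Given these estimates, the identity
\begin{align*}
\phi^k_{H, \mathcal{U}}(f) \phi^k_{H, \mathcal{U}}(g) = \phi^{k, W}_{\mathrm{model}}(f)\phi^{k, W}_{\mathrm{model}}(g) + O(k^{-2})
\end{align*}
in norm follows by expanding the product, using the uniform bound on $\|\phi^{k, W}_{\mathrm{model}}(\cdot)\|$ from Lemma \ref{lem_bdd_DQ.y}; similarly $\phi^k_{H, \mathcal{U}}(fg) = \phi^{k, W}_{\mathrm{model}}(fg) + O(k^{-2})$ and $\tfrac{1}{k}\phi^k_{H, \mathcal{U}}(\{f,g\}) = \tfrac{1}{k}\phi^{k, W}_{\mathrm{model}}(\{f,g\}) + O(k^{-3})$. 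Substituting and applying Proposition \ref{prop_model_star.y} with $l = 1$ to the model operators then completes the estimate. The main obstacle will be the chart-level bookkeeping in the first step: one must rigorously justify that the matrix elements of all four operators localize to a single common chart for each pair $(f_U, g_V)$, and must check that choosing different candidate charts for boundary-straddling pairs does not contribute at order $k^{-1}$. Once this localization is in place, the phase expansion itself is a straightforward first-order calculation in the spirit of the model case.
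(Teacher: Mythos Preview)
Your strategy—compare $\phi^k_{H,\mathcal{U}}$ to the model operator chart by chart via the phase bound of Lemma \ref{lem_arg.y} and then invoke Proposition \ref{prop_model_star.y}—is sound in spirit but has a real gap in the localization step. Partitioning $f=\sum_U f_U$ and $g=\sum_V g_V$ along $\mathcal{U}$ does \emph{not} place each pair $(f_U,g_V)$ in a single chart: the support of $g_V$ is in $V$, not $U$, so in general there is no $W\in\mathcal{U}$ in which both $f_U$ and $g_V$ are compactly supported. Without that, $\phi^{k,W}_{\mathrm{model}}(g_V)$ is not defined, and your comparison $\|E^W_{g_V}\|=O(k^{-2})$ breaks down near $\partial W$. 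The fix is to pass to a \emph{finer} partition of unity, with pieces small enough that any two whose supports meet are jointly contained in some element of $\mathcal{U}$; this is a Lebesgue-number argument using (U3), but it must be carried out explicitly. Even then, you must also verify that the model product $\phi^{k,W}_{\mathrm{model}}(f_U)\phi^{k,W}_{\mathrm{model}}(g_V)$, whose intermediate sum runs over the full lattice $\Z^n/k$, agrees with the product taken inside $\mathcal{H}_k|_W$ up to $O(k^{-N})$.

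The paper avoids this entirely by not reducing to the model case. It works directly with the error operator $\mathcal{A}^k_{err}=k^2\bigl(\phi^k(f)\phi^k(g)-\phi^k(fg)+\tfrac{\sqrt{-1}}{2k}\phi^k(\{f,g\})\bigr)$ and estimates its matrix coefficients. Rather than partitioning the functions, it partitions the \emph{base}: a compact set $B'\supset\mathrm{supp}(\mathcal{A}^k_{err})$ is split into pieces $V_i\subset U_i$, and a general criterion (Lemma \ref{lem_op_bdd.y} in the Appendix) converts two matrix-coefficient bounds—a decay estimate $\|P_{x+p/k}\mathcal{A}^k_{err}P_x\|\le C_i(1+|p|)^{-(n+1)}$ for pairs inside a chart, and a uniform $O(k^{-2n})$ bound for far pairs—into $\sup_k\|\mathcal{A}^k_{err}\|<\infty$. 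The near-diagonal bound is obtained by redoing the Taylor expansion of the proof of Proposition \ref{prop_model_star.y} in each chart, with the phase from Lemma \ref{lem_arg.y} carried along as an $O(|m|^3/k^2)$ perturbation; the far bound comes from rapid Fourier decay. Your route is more modular (it reuses the model result as a black box) at the cost of heavier localization overhead; the paper's is more hands-on but sidesteps the chart-matching problem altogether.
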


\begin{proof}
In this proof, we write $\phi^k = \phi^k_{H, \mathcal{U}}$. 
First, informally we compute, for $x, x + \frac{p}{k} \in U$ for some $U\in \mathcal{U}$, using Lemma \ref{lem_arg.y}, and denoting by $K_{\phi^k(f)\phi^k(g)}(\cdot, \cdot)$ the matrix coefficient of $\phi^k(f)\phi^k(g)$, 
\begin{align*}
    &K_{fg}\left(x +\frac{p}{k}, x\right) \\
    &=\left(1 + O\left(\frac{|p|^3}{k^2}\right)\right)\sum_{m \in \Z^n}\left. \left(f_{p-m}g_{m}\right)\right|_{x + \frac{p}{2k}},  \\
    &K_{\{f, g\}}\left(x +\frac{p}{k}, x\right) \\
    &=  \sqrt{-1} \left(1 + O\left(\frac{|p|^3}{k^2}\right)\right)\sum_{m \in \Z^n}\left. \left\{ \langle m, \nabla f_{p-m}\rangle \cdot g_{m} - f_{p-m}\cdot \langle \left(p-m\right), \nabla g_{m} \rangle  \right\}\right|_{x + \frac{p}{2k}},  \\
    &K_{\phi^k(f)\phi^k(g)}\left(x + \frac{p}{k}, x\right) \\
    &= \sum_m f_{p-m}\left(x + \frac{p +m}{2k}\right)g_m\left(x + \frac{m}{2k}\right)\left(1 + O\left(\frac{|p-m|^3 + |m|^3}{k^2}\right)\right) + (\mbox{error}) \\
    &= \sum_m  \left(1 + O\left(\frac{|p-m|^3 + |m|^3}{k^2}\right)\right)
    \left\{\left(\left.\left(f_{p-m} + \left\langle \frac{m}{2k}, \nabla f_{p-m}\right\rangle\right)\right|_{x + \frac{p}{2k}} + O\left(\frac{|m|^2}{k^2}\right) \right) \right. \\
    &\left. \qquad \cdot\left(\left.\left(g_{m} - \left\langle \frac{p-m}{2k}, \nabla g_{m}\right\rangle\right)\right|_{x + \frac{p}{2k}} + O\left(\frac{|p-m|^2}{k^2}\right) \right) \right\} +(\mbox{error}). 
\end{align*}
Here the last equation uses the Taylor expansion of $f_{p-m}$ and $g_m$. 
The terms in the sum for $K_{\phi^k(f)\phi^k(g)}$ is taken for $m$ with $x + m/k \in U$, and the term (error) comes from the contributions from those points $b \in B_k \setminus U$ which are both close to $x$ and $x + p/k$, and we see below that this error term is indeed negligible as $k \to \infty$. 
So, at least informally, we see that
\begin{align*}
    K_{\phi^k(f)\phi^k(g)}\left(x + \frac{p}{k}, x\right) =K_{fg}\left(x +\frac{p}{k}, x\right) + \frac{-\sqrt{-1}}{2 k}K_{\{f, g\}}\left(x +\frac{p}{k}, x\right) + O\left(\frac{1}{k^2}\right). 
\end{align*}
So, what we have to do is to give appropriate estimates of the operator norms (not only matrix coefficients) of the error terms. 
Since the proof is long, we give a detailded proof in the Appendix.
\end{proof}

From this, we conclude that the maps $\{\phi_{H, \mathcal{U}}^k\}_{k \in \N}$ solves Problem \ref{prob_intro}. 
\begin{thm}\label{thm_general}
Let $(X^{2n}, \omega, L, \nabla, h)$ be a prequantized symplectic manifold equipped with a proper Lagrangian fiber bundle $\mu \colon X \to B$ with connected fibers. 
Assume we are given a horizontal distribution $H \subset TX$ satisfying the condition in (H) and an open covering $\mathcal{U}$ satisfying the conditions in (U). 
Then, the family of adjoint-preserving linear maps $\{\phi_{H, \mathcal{U}}^k\}_{k \in \N}$, defined in Definition \ref{def_rep_gen.y}, is a strict deformation quantization for $(X, \omega)$. 
\end{thm}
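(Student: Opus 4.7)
The plan is to verify the two semiclassical conditions of Definition \ref{def_strict_DQ} together with boundedness of $\phi^k_{H,\mathcal{U}}(f)$ for $f \in C_c^\infty(X)$. The adjoint-preserving property is built into the construction by the symmetric use of the midpoint $(b+c)/2$. Boundedness follows by localization: since $\mathrm{supp}(f)$ is compact and $\mathcal{U}$ is locally finite, only finitely many charts contribute; on each $U \in \mathcal{U}$, Lemma \ref{lem_arg.y} shows that the matrix coefficients of $\phi^k_{H,\mathcal{U}}(f)$ in the action-angle basis agree with those of the model operator $\phi^k(f|_{X_U})$ up to phases of unit modulus. The estimate of Lemma \ref{lem_bdd_DQ.y} then goes through verbatim to give a bound on each $\mathcal{H}_k|_U$, and these patch to a global bound because the covering is locally finite with each $\overline{U}$ compact.

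Condition (2) of Definition \ref{def_strict_DQ} is a direct consequence of Theorem \ref{thm_main_v2.y}. Applying that theorem to both $(f,g)$ and $(g,f)$ and subtracting, the symmetric term $\phi^k_{H,\mathcal{U}}(fg) = \phi^k_{H,\mathcal{U}}(gf)$ cancels while the Poisson-bracket contributions double, yielding
\[
\left\| [\phi^k_{H,\mathcal{U}}(f),\, \phi^k_{H,\mathcal{U}}(g)] + \frac{\sqrt{-1}}{k}\phi^k_{H,\mathcal{U}}(\{f,g\}) \right\| = O\!\left(\frac{1}{k^2}\right),
\]
which is precisely the condition required.

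The main work is condition (1), namely $\|\phi^k_{H,\mathcal{U}}(f)\| \to \|f\|_{C^0}$ as $k \to \infty$. I would follow the approach of Proposition \ref{prop_model_norm.y}. For the lower bound, I would pick $x_0 \in X$ with $|f(x_0)|$ close to $\|f\|_{C^0}$, choose a chart $U \in \mathcal{U}$ containing $\mu(x_0)$, and construct unit vectors $\xi^k \in \mathcal{H}_k|_U$ as normalized linear combinations of the $\psi_b^k$ over $b \in B_k$ in a small $\R^n$-neighborhood of $\mu(x_0)$, with coefficients designed so that the corresponding $T^n$-Fourier series concentrates near the angular coordinate of $x_0$. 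By Lemma \ref{lem_arg.y}, the action of $\phi^k_{H,\mathcal{U}}(f)$ on $\xi^k$ differs from the model computation by phases tending to $1$ on the relevant matrix entries, so $\|\phi^k_{H,\mathcal{U}}(f)\xi^k\| \to |f(x_0)|$. For the upper bound, I would use a partition of unity subordinate to $\mathcal{U}$ and the near-diagonal concentration of matrix coefficients: on scales much larger than $1/k$ but much smaller than the oscillation scale of $f$, the function $f$ is approximately constant in the base direction, and the local operator behaves as in Lemma \ref{lem_const_model.y}.

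The hard part is reconciling the two scales in the upper-bound argument. The phase corrections from Lemma \ref{lem_arg.y} are uniform only when $|m|^3/k^2 \to 0$, which forces truncating the fiberwise Fourier expansion of $f$ at $|m| \lesssim k^{2/3}$ and controlling the high-frequency remainder through the rapid decay of smooth Fourier coefficients, while also handling the patching across distinct charts of $\mathcal{U}$ where different action-angle coordinates are in play. I expect these technical estimates to mirror those deferred to the appendix in the model case, with the novel ingredients being the $H$-dependent phases and the inter-chart contributions.
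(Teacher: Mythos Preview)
Your proposal is correct and matches the paper's own proof, which is extremely brief: it simply cites Theorem \ref{thm_main_v2.y} for condition (2) and says condition (1) ``can be proved in a similar way as in the proof of Proposition \ref{prop_model_norm.y} and Theorem \ref{thm_main_v2.y}, so we leave the details to the reader.'' Your outline of condition (1) is exactly the kind of argument the paper has in mind.

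One small simplification: you do not need to truncate the fiberwise Fourier expansion at a $k$-dependent level like $|m| \lesssim k^{2/3}$. As in the appendix proof of Proposition \ref{prop_model_norm.y}, one truncates at a \emph{fixed} $M$ depending only on the target accuracy $\epsilon$ (chosen so that $\sum_{|m|>M}\|f_m\|_{C^0}$ is small). With $M$ fixed, the phase bound $\frac{5}{24}\|\nabla A\|_U\,|m|^3/k^2$ from Lemma \ref{lem_arg.y} is automatically $O(M^3/k^2) \to 0$, so the phase corrections are harmless without any delicate balancing of scales. The inter-chart contributions are handled exactly as in the proof of Theorem \ref{thm_main_v2.y}: they are $O(k^{-N})$ for every $N$ by rapid decay of Fourier coefficients.
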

\begin{proof}
The condition (1) in Definition \ref{def_strict_DQ} can be proved in a similar way as in the proof of Proposition \ref{prop_model_norm.y} and Theorem \ref{thm_main_v2.y}, so we leave the details to the reader. 
The condition (2) follows from Theorem \ref{thm_main_v2.y}. 
\end{proof}

As we remarked earlier, the choice of open covering $\mathcal{U}$ for $B$ is not essential in our construction, as follows. 
\begin{prop}\label{prop_open_cov.y}
Let $(X^{2n}, \omega, L, \nabla, h)$ be a prequantized symplectic manifold equipped with a proper Lagrangian fiber bundle $\mu \colon X \to B$ with connected fibers. 
Assume we are given a horizontal distribution $H \subset TX$ satisfying the condition in (H) and two choices of open coverings $\mathcal{U}$ and $\mathcal{V}$ satisfying the conditions in (U). 
Then, for any function $f \in C_c^\infty(X)$, we have
\begin{align*}
    \|\phi^k_{H, \mathcal{U}}(f) - \phi^k_{H, \mathcal{V}}(f)\|
    =O(k^{-N})
\end{align*}
for all $N \in \N$. 
\end{prop}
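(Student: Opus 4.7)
The plan is to exploit the observation that the two operators $\phi^k_{H,\mathcal{U}}(f)$ and $\phi^k_{H,\mathcal{V}}(f)$ differ only on ``long-range'' matrix elements, combined with the rapid decay of fiberwise Fourier coefficients of $f \in C_c^\infty(X)$.

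First I would verify that the two constructions agree on pairs $(b,c) \in B_k \times B_k$ that are close with respect to both coverings. The pairing $\langle \xi_b^k, \xi_c^k\rangle_{\mathcal{U}}$ uses the parallel transport (induced by the fixed horizontal distribution $H$) along the unique affine linear path from $b$ to $c$, which is determined by the canonical integral affine structure on $B$ rather than by any particular choice of $U \in \mathcal{U}$. Likewise the midpoint $(b+c)/2$ is intrinsic. Thus if $b,c$ lie in some $U \in \mathcal{U}$ and some $V \in \mathcal{V}$ simultaneously, both definitions yield identical matrix elements, and the difference operator has vanishing matrix element at $(b,c)$.

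Next I would use compactness and a Lebesgue-number argument to show there exists $\delta > 0$ (independent of $k$) such that the nonzero matrix elements of $\phi^k_{H,\mathcal{U}}(f) - \phi^k_{H,\mathcal{V}}(f)$ are supported on pairs with $|b-c| \geq \delta$ in local action-angle coordinates. Since $\operatorname{supp}(f)$ is compact, there exists a compact $K \subset B$ such that $\mu(\operatorname{supp} f) \subset K$, and only finitely many elements of $\mathcal{U}$ and of $\mathcal{V}$ meet a neighborhood of $K$ (by local finiteness and (U3)). A standard Lebesgue number argument then yields $\delta > 0$ so that any pair $b,c$ near $K$ with $|b-c| < \delta$ lies simultaneously in a common element of both $\mathcal{U}$ and $\mathcal{V}$; by the first step such pairs contribute nothing to the difference.

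For the remaining pairs, which satisfy $|b-c| \geq \delta$, I would work in a locally finite cover by action-angle coordinate charts (finite when restricted near $K$) as in Lemma \ref{lem_arg.y}. On each chart, a pair $b = x$, $c = x + m/k$ with $|m|/k \geq \delta$ has matrix element of the form $f_m(x + m/(2k))$ up to a unit-modulus phase, where $f_m$ is the $m$-th fiberwise Fourier coefficient. Because $f \in C_c^\infty(X)$, for every $N$ there is a constant $C_N$ with $\|f_m\|_{C^0} \leq C_N(1+|m|)^{-N}$; restricting to $|m| \geq k\delta$ gives $\|f_m\|_{C^0} \leq C_N(k\delta)^{-N'} (1+|m|)^{-(N-N')}$ for any $N' \leq N$. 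Finally I would apply Lemma \ref{lem_est_norm.y} locally (summing over $m$ with $|m| \geq k\delta$, using the choice $N$ large enough to make the sum over $m$ convergent) and patch together the finitely many local bounds to deduce $\|\phi^k_{H,\mathcal{U}}(f) - \phi^k_{H,\mathcal{V}}(f)\| = O(k^{-N})$ for every $N$.

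The main obstacle, though a technical rather than conceptual one, is the patching: Lemma \ref{lem_est_norm.y} is stated for a single basis indexed by $\Z^n$, so for the global estimate one must cover $K$ by finitely many action-angle charts, decompose the difference operator into pieces supported in the corresponding blocks $\mathcal{H}_k|_U$, and bound each piece separately. The ``doubly close'' overlap regions require some care so that we do not double-count matrix elements, but because the Lebesgue number $\delta$ is uniform and the cover is finite, a partition-of-unity or direct block decomposition argument suffices to convert the pointwise Fourier-decay estimate into the desired operator norm bound.
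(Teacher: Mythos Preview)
Your proposal is correct and follows essentially the same approach as the paper's own (one-paragraph) proof sketch: the paper simply says the result follows from the rapid decay of fiberwise Fourier coefficients together with estimates ``similar to the proof of Theorem \ref{thm_main_v2.y}'', and what you have written is precisely a fleshing-out of that sketch. Your observation that the matrix elements agree whenever $(b,c)$ is close with respect to both coverings, so that only ``long-range'' pairs $|b-c|\ge\delta$ contribute, together with the Fourier-decay bound and Lemma \ref{lem_est_norm.y}, is exactly the intended argument; the patching issue you flag is handled in the paper via the machinery of Lemma \ref{lem_op_bdd.y}, which you could invoke directly instead of reproving it.
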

\begin{proof}
This follows easily from the fact that the Fourier coefficient of a smooth function on $T^n$ are rapidly decreasing, as in \eqref{eq_bd_fourier.y}. 
Indeed, using this fact, we can show that matrix elements of $\phi^k_{H, \mathcal{U}}(f) - \phi^k_{H, \mathcal{V}}(f)$ are of $O(k^{-N})$ for any $N \in \N$ by a similar estimate as in the proof of Theorem \ref{thm_main_v2.y}, and the result follows. 
We leave the details to the reader. 
\end{proof}

\begin{rem}\label{rem_open_cov.y}
As should be obvious from the proof of Theorem \ref{thm_main_v2.y} in the Appendix, the assumption (U3) is not essential. 
Indeed, we may drop this condition. 
We can define $\phi^k_{H, \mathcal{U}}$ in the same way, and show that Theorem \ref{thm_main_v2.y} extends to this case. 
We put the condition (U3) only because it simplifies the proof of Theorem \ref{thm_main_v2.y}. 
Since Proposition \ref{prop_open_cov.y} also extends to this general case, we lose nothing by requiring the condition (U3). 

In particular, in our construction of $\phi^k$ in the model case $\R^n \times T^n$ in subsection \ref{subsec_model.y}, we used such $\mathcal{U}$, namely we set $\mathcal{U} = \{\R^n\}$ (and set $H = T\R^n$). 
Later in section \ref{sec_relation.y}, we again use this trivial non-relatively compact covering for $\R^n$ and consider deformation quantizations (corresponding to non-trivial $H$). 
\end{rem}

\section{Star products}\label{sec_star}
In this section we analyze the star products induced by the deformation quantization defined in Section \ref{sec_construction.y}. 
Our construction is given by explicit formula locally, so in principle we can compute higher terms of star products in action-angle coordinates. 
In the case where the horizontal distribution $H$ is Lagrangian and integrable, we show in Theorem \ref{thm_star_integrable} that the star product obtained from our strict deformation quantization conincides with the star product given by the Fedosov's construction \cite{Fedosov1994}. 
In general cases, by an explicite computation we check this coincidence up to second order term in Theorem \ref{thm_star_gen.y}. 

We consider the settings in subsection \ref{subsec_construction_general.y}. 
We are given a prequantized closed symplectic manifold $(X^{2n}, \omega, L, \nabla)$ and a proper Lagrangian fiber bundle structure $\mu \colon X \to B$ with connected fibers. 
We fix a horizontal distribution $H \subset TX$ and a finite open covering $\mathcal{U}$ of $B$ satisfying conditions in (H) and (U) in subsection \ref{subsec_construction_general.y}. 
Let us consider the deformation quantization $\{\phi^k_{H, \mathcal{U}}\}_k$ defined in Definition \ref{def_rep_gen.y} from these datum. 

The horizontal distribution $H$ associates a torsion-free symplectic connection $\nabla^{TX, H}$ on $TX$, as follows. 
As a first step, we define a connection $\widetilde{\nabla}^{TX, H}$ on $TX$, which does not necessarily preserve the symplectic form and possibly has torsion.  
By the identification $\mu^* TB \simeq H$ and the flat connection on $TB$, we get the pullback connection on $H$. 
Moreover, the vertical tangent bundle $\ker d\mu$ admits the canonical flat connection. 
We define the connection $\widetilde{\nabla}^{TX, H}$ by the direct sum of these two connections, using $TX = H \oplus \ker d\mu$. 

This connection is locally described as follows. 
Let us focus on one open set $U \in \mathcal{U}$. 
We fix an action-angle coordinate on $X_U$, and express $H$ on $X_U$ as in \eqref{eq_def_A} using a locally defined $\R^n$-valued one-form $A \in C^\infty(U; T^*U \otimes \R^n)$.
Denote the Christoffel symbol of $\widetilde{\nabla}^{TX, H}$ with respect to the local frame $(\del_{x_1}, \cdots, \del_{x_n}, \del_{\theta_1}, \cdots, \del_{\theta_n}  )$ by $\widetilde{\Gamma}_{\cdot \cdot}^\cdot$. 
\begin{lem}
We have
\begin{align*}
    \widetilde{\Gamma}_{x_l x_j}^{\theta_i} = -\del_{x_l}A_j^i, 
\end{align*}
and $\widetilde{\Gamma}_{\cdot \cdot}^\cdot = 0$ for all other components.  
\end{lem}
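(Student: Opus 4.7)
The plan is to read off the Christoffel symbols directly from the definition of $\widetilde{\nabla}^{TX, H}$ as the direct sum connection. First I would introduce the local horizontal frame
\begin{align*}
    e_j := \frac{\partial}{\partial x_j} + A_j^i \frac{\partial}{\partial \theta_i}, \qquad j = 1, \ldots, n,
\end{align*}
which under the identification $\mu^* TB \simeq H$ corresponds to the pullback of the flat frame $\partial/\partial x_j$ on $U$. By the definition of $\widetilde{\nabla}^{TX, H}$ on the $H$-summand (pullback of the flat connection on $TB$), we have $\widetilde{\nabla}^{TX, H} e_j = 0$ for every $j$, and by the definition on the $\ker d\mu$-summand (canonical flat connection on the vertical bundle), we have $\widetilde{\nabla}^{TX, H} \partial_{\theta_i} = 0$ for every $i$.

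Next I would use the identity $\partial_{x_j} = e_j - A_j^i \partial_{\theta_i}$ to express the coordinate frame in terms of the two summands and differentiate. Because the connection is $C^\infty$-linear in the direction slot, only the tangential action of the Leibniz rule on the coefficient $A_j^i$ survives, giving
\begin{align*}
    \widetilde{\nabla}^{TX, H}_{\partial_{x_l}} \partial_{x_j}
    = -\bigl(\partial_{x_l} A_j^i\bigr) \partial_{\theta_i}, \qquad
    \widetilde{\nabla}^{TX, H}_{\partial_{\theta_l}} \partial_{x_j}
    = -\bigl(\partial_{\theta_l} A_j^i\bigr) \partial_{\theta_i}.
\end{align*}
The second expression vanishes because the $T^n$-invariance of $H$ imposed by condition (H) forces $A_j^i$ to depend only on the action variables $x$. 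Similarly, $\widetilde{\nabla}^{TX, H}_{\partial_{\mu}} \partial_{\theta_j} = 0$ for all $\mu$ directly from $\widetilde{\nabla}^{TX, H} \partial_{\theta_j} = 0$.

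Reading off from these identities, the only nonzero Christoffel symbol is $\widetilde{\Gamma}_{x_l x_j}^{\theta_i} = -\partial_{x_l} A_j^i$, which is exactly the asserted formula. There is no real obstacle here; the main point is simply to recognize that the defining frames $\{e_j\}$ and $\{\partial_{\theta_i}\}$ are parallel by construction and to convert between the horizontal frame and the coordinate frame. I would not expect any delicate estimate or nontrivial computation to enter.
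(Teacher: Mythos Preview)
Your proposal is correct and follows essentially the same approach as the paper: introduce the horizontal lift $e_j = \partial_{x_j} + A_j^i \partial_{\theta_i}$, use that both $e_j$ and $\partial_{\theta_i}$ are parallel by construction, and read off the Christoffel symbols from $\partial_{x_j} = e_j - A_j^i \partial_{\theta_i}$. Your treatment is slightly more explicit than the paper's in checking the $\widetilde{\nabla}^{TX,H}_{\partial_{\theta_l}}\partial_{x_j}$ case via the $T^n$-invariance of $A$, whereas the paper simply declares the remaining cases ``obvious.''
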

\begin{proof}
The horizontal lift of $\del_{x_j} \in C^\infty(U; TU)$ is given by
$\tilde{\del}_{x_j} =\del_{x_j} + A_j^i \del_{\theta_i} $. 
Since we have $\widetilde{\nabla}^{TX, H}_{\del_{x_l}}\tilde{\del}_{x_j} =0$ and $\widetilde{\nabla}^{TX, H}_{\del_{x_l}} \del_{\theta_i} = 0$, we have
\begin{align*}
    \widetilde{\nabla}^{TX, H}_{\del_{x_l}} \del_{x_j} = -\widetilde{\nabla}^{TX, H}_{\del_{x_l}} A_j^i \del_{\theta_i}
    = -\frac{\del A_i^j}{\del x_l} \del_{\theta_i}. 
\end{align*}
The vanishing for other cases are obvious. 
\end{proof}

Note that $\widetilde{\nabla}^{TX, H}$ is symplectic if and only if $\widetilde{\Gamma}_{x_l x_j}^{\theta_i} = \widetilde{\Gamma}_{x_l x_i}^{\theta_j} $, and torsion-free if and only if $\widetilde{\Gamma}_{x_l x_j}^{\theta_i} = \widetilde{\Gamma}_{x_j x_l}^{\theta_i} $, for all $i, j, l$. 
Now we define a torsion-free symplectic connection $\nabla^{TX, H}$ by symmetrization, as follows. 

\begin{defn}\label{def_symp_conn}
  Assume we are given a symplectic manifold $(X, \omega)$ with a proper Lagrangian fiber bundle $\mu \colon X \to B$ as well as a horizontal distribution $H$ satisfying the condition in (H) in subsection \ref{subsec_construction_general.y}. 
  In a locally defined action-angle coordinate chart $X_U \simeq U \times T^n$ as above, we define a connection on $TX_U$ by requiring its Cristoffel symbol $\Gamma_{\cdot, \cdot}^\cdot$ with respect to the local frame $(\del_{x_1}, \cdots, \del_{x_n}, \del_{\theta_1}, \cdots, \del_{\theta_n}  )$ to be
  \begin{align*}
      \Gamma_{x_l x_j}^{\theta_i} 
      =-\frac{1}{6}\left( \del_lA_j^i + \del_lA_i^j +\del_iA_j^l +\del_iA_l^j +\del_jA_l^i +\del_jA_i^l
    \right), 
  \end{align*}
  and $\Gamma_{\cdot \cdot}^\cdot = 0$ for other components. 
  This construction does not depend on the choice of action-angle coordinate, so we get a global torsion-free symplectic connection on $TX$. 
  We define $\nabla^{TX, H}$ to be this connection. 
\end{defn}
Indeed, it is easily checked that the above symmetrization procedure of Cristoffel symbols is compatible with the change of action-angle coordinates. 

In general, for a symplectic manifold $(X, \omega)$, if we fix a torsion-free symplectic connection $\nabla^{TX}$ on $X$, for each closed element $a \in \hbar \Omega^2(X)[[\hbar]]$, Fedosov's construction \cite{Fedosov1994} associates a star product on $C^\infty(X)[[\hbar]]$, denoted by $*_{H, a}$. 
Moreover Nest and Tsygan \cite{NestTsygan1995} showed that the set of equivalence classes of star products is in one-to-one correspondence with the set of equivalence classes of formal deformations of the symplectic structure, $\hbar H_{dR}^2(X)[[\hbar ]]$. 
In particular if we set $a = 0$, we get a star product $*_{\nabla, 0}$, which is canonically associated to the torsion-free symplectic connection.  
Applying this to our case, we have a canonical choice of star product corresponding to the connection $\nabla^{TX, H}$ and $0 \in \hbar \Omega^2(M)[[\hbar]]$, denoted by $*_{H, 0} := *_{\nabla^{TX, H}, 0}$. 
We denote by $\mathcal{C}_j^{H, 0}(f, g)$ the $j$-th coefficient of $\hbar$ in $f *_{H, 0} g$ for $f, g \in C^\infty(X)$. 
i.e., we have
\begin{align*}
    f *_{H, 0} g = \sum_{j = 0}^\infty \mathcal{C}_j^{H, 0}(f, g) \hbar^j. 
\end{align*}

First, we consider the symplest case when the horizontal distribution $H$ is Lagrangian and integrable. 
In this case $\widetilde{\nabla}^{TX, H}$ is already torsion-free and symplectic, and we have $\widetilde{\nabla}^{TX, H} = {\nabla}^{TX, H}$. 
\begin{thm}\label{thm_star_integrable}
Assume the horizontal distribution $H$ is Lagrangian and integrable. 
Then we have, for all $f , g \in C_c^\infty(X)$ and $l \in \N$, 
\begin{align*}
        \left\| \phi^k_{H, \mathcal{U}}(f) \phi^k_{H, \mathcal{U}}(g) -
        \sum_{j = 0}^l \left( \frac{-\sqrt{-1}}{k}\right)^j \phi^k_{H, \mathcal{U}}\left(\mathcal{C}_j^{H, 0}(f, g)\right)
        \right\| =O\left(\frac{1}{k^{l+1}}\right)
\end{align*}
as $k \to \infty$. 
\end{thm}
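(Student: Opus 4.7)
The plan is to reduce to the model computation of Proposition \ref{prop_model_star.y} via a local straightening of $H$, and then to identify the resulting standard Moyal--Weyl product with the Fedosov product $*_{H,0}$.

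\textbf{Step 1: Straightening of $H$.} The one-form $A \in C^\infty(U; T^*U \otimes \R^n)$ from \eqref{eq_def_A} is symmetric precisely when $H|_{X_U}$ is Lagrangian, and each component $A^i$ is closed on $U$ precisely when $H|_{X_U}$ is integrable. By Proposition \ref{prop_open_cov.y} we may pass to a sufficiently fine open cover satisfying (U) consisting of convex action-coordinate charts, without affecting the family $\{\phi^k_{H,\mathcal{U}}\}$ up to $O(k^{-N})$ errors. On such a chart $U$ we write $A = -\nabla \alpha$ for some $\alpha \in C^\infty(U; \R^n)$. The fiber-preserving map $(x,\theta) \mapsto (x, \theta - \alpha(x))$ is a symplectomorphism (symmetry of $\nabla \alpha$ forces $dx \wedge d\alpha$ to vanish) that sends $H$ to $\mathrm{Span}\{\partial_{x_j}\}$. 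After the matching gauge on $(L,\nabla)$ supplied by Lemma \ref{lem_trivial_preq}, the matrix-coefficient formula for $\phi^k_{H,\mathcal{U}}|_{X_U}$ agrees with the model formula \eqref{eq_mat_elem_model.y}: inspecting \eqref{eq_phase.y}, the phase correction disappears because the straightened $H$ has $\alpha' \equiv 0$ along horizontal paths.

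\textbf{Step 2: Identification of the Fedosov product with Moyal--Weyl.} In a straightened chart, the vanishing of $A$ forces all Christoffel symbols in Definition \ref{def_symp_conn} to vanish, so $\nabla^{TX,H}$ is the flat connection of the Darboux coordinates. It is a standard fact about Fedosov's construction that, for the flat symplectic connection on a Darboux chart with characteristic class $a = 0$, the Fedosov star product coincides with the standard Moyal--Weyl product $*_{\mathrm{std}}$. Therefore $\mathcal{C}_j^{H,0}(f,g) = \mathcal{C}_j^{\mathrm{std}}(f,g)$ in every straightened chart, and since $\mathcal{C}_j^{H,0}$ is a bidifferential operator, this determines it globally.

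\textbf{Step 3: Globalization.} Take a partition of unity $\{\rho_U\}$ subordinate to the straightened cover, and decompose $f = \sum_U f_U$, $g = \sum_V g_V$ with $f_U := \rho_U f$ and $g_V := \rho_V g$. The diagonal contributions $\phi^k_{H,\mathcal{U}}(f_U) \phi^k_{H,\mathcal{U}}(g_U)$ satisfy the required asymptotic by Proposition \ref{prop_model_star.y} combined with Step 2. Off-diagonal contributions from $f_U$ and $g_V$ with essentially disjoint supports have matrix coefficients supported far from the diagonal, once one uses that the fiber Fourier coefficients of smooth functions decay rapidly; by Lemma \ref{lem_est_norm.y} their operator norms are $O(k^{-N})$ for every $N$, exactly as in the proof of Proposition \ref{prop_open_cov.y}. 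Summing over the finitely many pieces supported in $\mathrm{supp}(fg)$ yields the claimed estimate.

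The main obstacle is \textbf{Step 2}, where one must carefully match normalizations: the symmetrization in Definition \ref{def_symp_conn} together with the characteristic class $a = 0$ must correspond to the Moyal--Weyl product on the nose rather than merely up to equivalence of star products. This relies on Fedosov's explicit construction together with the observation that the Moyal--Weyl product on flat Darboux space is already associative, so the Fedosov recursion terminates trivially in the flat case.
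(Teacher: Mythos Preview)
Your overall strategy matches the paper's: straighten $H$ locally to the standard horizontal distribution, identify $*_{H,0}$ with the Moyal--Weyl product in those coordinates, and invoke Proposition~\ref{prop_model_star.y}. The paper globalizes by working on a single chart $U$ and controlling the contribution from points outside $U$ via the same rapid-decay estimates used in the proof of Theorem~\ref{thm_main_v2.y}, whereas you route through a partition of unity; both are reasonable.

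There is, however, a gap in your Step~3. You treat two cases --- the ``diagonal'' terms $\phi^k(f_U)\phi^k(g_U)$ and the ``off-diagonal'' terms with \emph{essentially disjoint supports} --- but a partition of unity subordinate to an overlapping cover will always produce pairs $(f_U,g_V)$ with $U\neq V$ and $\mathrm{supp}(\rho_U)\cap\mathrm{supp}(\rho_V)\neq\emptyset$. For these, neither argument applies: the supports are not disjoint, so $\mathcal{C}_j^{\mathrm{std}}(f_U,g_V)$ need not vanish and the product $\phi^k(f_U)\phi^k(g_V)$ is not $O(k^{-N})$; yet $f_U$ and $g_V$ need not both live in a single model chart, so Proposition~\ref{prop_model_star.y} is not immediately available. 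The fix is to refine the cover (which you already allow via Proposition~\ref{prop_open_cov.y}) so that whenever $\mathrm{supp}(\rho_U)$ and $\mathrm{supp}(\rho_V)$ meet, both are contained in some common straightened chart $W$; then Proposition~\ref{prop_model_star.y} applies to every such pair in $W$. Alternatively, drop the partition of unity and follow the paper's direct approach. Also, your description of the disjoint-support case is slightly off: the matrix coefficients of $\phi^k(f_U)\phi^k(g_V)$ are not ``supported far from the diagonal'' --- they can be diagonal --- but they are uniformly $O(k^{-N})$ because any intermediate point $d$ contributing to $\sum_d K_{f_U}(b,d)K_{g_V}(d,c)$ must lie within $O(1/k)$ of both $\mathrm{supp}(\rho_U)$ and $\mathrm{supp}(\rho_V)$ for the Fourier coefficients not to be rapidly decaying, which is impossible for large $k$ when the supports are a fixed distance apart.
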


\begin{proof}
We work on an element $U \in \mathcal{U}$. 
Since $H$ is Lagrangian and integrable, we can choose the action-angle coordinate on $X_U$ so that $H = \mathrm{Span}\{\del_{x_i}\}_i$. 
Using this coordinate and regarding $X_U \subset \R^n \times T^n$, the star product $*_{H, 0}$ coincides with the standard Moyal-Weyl star product $*_{std}$ (see \eqref{eq_std_MW.y}). 
Moreover, our deformation quantization coincides with the one constructed in the ''model case" $(\R^n \times T^n, {}^t\!dx\wedge d\theta)$ in Subsection \ref{subsec_model.y}, modulo contribution from the terms coming from outside $U$. 
So the result essentially follows from Proposition \ref{prop_model_star.y}. 
We need to show that the error terms coming from  outside $U$ is $O(k^{-N})$ for any $N \in \N$, and this is done in the similar way as in the proof of Theorem \ref{thm_main_v2.y}. 
\end{proof}

Next we turn to the general case, where $H$ is not necessarily symplectic or integrable.  
Also in this case, we are able to show that our strict deformation quantization induces a star product, denoted by $\star_{H}$, and this star product coincides with the above star product $*_{H, 0}$ up to order two in $\hbar$. 

From now on we compute the order-two term of the expected star product. 
Let us focus on one element $U \in \mathcal{U}$ and use the local notations as before. 
Using the derivatives of $A_j^i$, we can explicitely compute the complex phase appearing in \eqref{eq_U(1).y}. 

Fix $x , x + m/k\in U \cap \frac{\Z^n}{k}$. 
We use $\alpha \colon \left[-\frac{|m|}{2k}, \frac{|m|}{2k}\right] \to T^n$ defined in the proof of Lemma \ref{lem_arg.y}, constructed from the horizontal lift of the line segment $[x, x+m/k]$ in $U$ with $\alpha(0) = 0$. 
We regard $\bm{u} := m/|m| \in C^\infty(U; TU)$, and $\alpha$ satisfies
\begin{align*}
    \alpha'(t) &= A(\bm{u})|_{t\bm{u} + x + m/(2k)} \\
    &= \frac{1}{|m|}\sum_j m_j A_j|_{t \bm{u} + x + m/(2k)}, \\
    \alpha''(t) &= \frac{1}{|m|^2} \sum_{j,l}m_jm_l \left.\frac{\del A_j}{\del x_l}\right|_{t \bm{u} + x + m/(2k)}. 
\end{align*}
We compute the phase term in \eqref{eq_phase.y} as, 
\begin{align*}
     &-  \left\langle \frac{m}{2k}, \alpha\left(\frac{|m|}{2k}\right)+\alpha\left(-\frac{|m|}{2k}\right) \right\rangle
   + \int_{-|m|/2k}^{|m|/2k} \left\langle s\bm{u} , \alpha'(s) \right\rangle ds \\
   &= -  \left\langle \bm{u}, \alpha''(0) \right\rangle \left( \frac{|m|}{2k}\right)^3 + \int_{-|m|/2k}^{|m|/2k} \left\langle s\bm{u} , \alpha''(0)s \right\rangle ds + O\left( \frac{|m|^4}{k^4}\right) \\
   &= - \frac{|m|^3}{24k^3}\left\langle \bm{u}, \alpha''(0) \right\rangle  + O\left( \frac{|m|^4}{k^4}\right). 
\end{align*}
So we get, for a function $f \in C_c^\infty(X)$, 
\begin{align}\label{eq_phase_higher.y}
    K_f^k(x + \frac{m}{k}, x) / f_m(x + \frac{m}{2k})
    &= \exp \left(\sqrt{-1}\left\{-\frac{|m|^3}{24k^2}\langle \bm{u}, \alpha''(0) \rangle  + O\left( \frac{|m|^4}{k^3}\right)\right\}\right) \\
    &= \exp \left(\sqrt{-1}\left\{-\frac{1}{24k^2} \sum_{i, j, l}m_im_jm_l \del_{l}A_j^i|_{x + m/(2k)}  + O\left( \frac{|m|^4}{k^3}\right)\right\}\right) \notag \\
    &= 1 - \frac{\sqrt{-1}}{24k^2} \sum_{i, j, l}m_im_jm_l \del_{l}A_j^i|_{x + m/(2k)}  + O\left( \frac{|m|^4}{k^3}\right). \notag
\end{align}
Here we denoted $\del_lA_j^i := \frac{\del A_j^i}{\del x_l}$. 
We define a symmetric tree tensor $\Theta \in C^\infty(U; S^3(T^*U))$ as, 
\begin{align}\label{eq_three_tensor.y}
    \Theta &:= \sum_{i , j, l} \Theta_{ijl} dx_i \otimes dx_j\otimes dx_l, \\
    \Theta_{ijl} &:=\frac{1}{6}\left( \del_lA_j^i + \del_lA_i^j +\del_iA_j^l +\del_iA_l^j +\del_jA_l^i +\del_jA_i^l
    \right). \notag
\end{align} 
and we also denote, for $v \in C^\infty(U; TU)$, 
\begin{align}\label{eq_three_tensor_2.y}
    \Theta(v) := \Theta (v \otimes v \otimes v). 
\end{align}

Suppose we are given two functions $f, g\in C_c^\infty(X)$. 
By \eqref{eq_phase_higher.y} we have
\begin{align*}
    K_{fg}^k\left(x + \frac{p}{k}, x\right) &= \left( 1 - \frac{\sqrt{-1}}{24k^2}\Theta(p)|_{x + \frac{p}{2k}} \right)
    \sum_{m\in\Z^n} (f_{p-m}g_m)|_{x + \frac{p}{2k}} + O(k^{-3}), \\
    K_{\{f,g\}}^k\left(x + \frac{p}{k}, x\right)&= \left(  1 -\frac{\sqrt{-1}}{24k^2}\Theta(p)|_{x + \frac{p}{2k}}\right)\{f, g\}_p|_{x + \frac{p}{2k}} + O(k^{-3}), 
\end{align*}
Since we have
\begin{align*}
    &K_f^k\left(x + \frac{p}{k}, x + \frac{m}{k}\right)K_g^k\left(x + \frac{m}{k}, x\right)\\
    &= \left( 1 -\frac{\sqrt{-1}}{24k^2}\left\{\Theta(p-m)\left(x + \frac{p+m}{2k}\right) + \Theta(m)\left(x + \frac{m}{2k}\right)\right\}\right)f_{p-m}\left(x + \frac{p+m}{2k}\right)g_m\left(x + \frac{m}{2k}\right) \\
    & \qquad + O(k^{-3})\\
    &= \left( 1 -\frac{\sqrt{-1}}{24k^2}\left\{\Theta(p-m)+ \Theta(m)\right\}|_{x + \frac{p}{2k}}\right)(f_{p-m}g_m)|_{x + \frac{m}{2k}} + O(k^{-3})
\end{align*}
and
\begin{align*}
    &f_{p-m}\left(x + \frac{p+m}{2k}\right) 
    = \left.\left\{f_{p-m} + \left\langle \frac{m}{2k}, \nabla f_{p-m}\right\rangle 
    + \frac{1}{2}{}^t\!\left(\frac{m}{2k} \right)H(f_{p-m})\left(\frac{m}{2k} \right)\right\}\right|_{x + \frac{p}{2k}} 
    + O\left(\frac{|m|^3}{k^3}\right) , \\
    &g_m\left(x + \frac{m}{2k}\right)
    = \left. \left\{ g_m - \left\langle \frac{p-m}{2k}, \nabla g_m\right\rangle
    +\frac{1}{2}{}^t\!\left(\frac{p-m}{2k} \right)H(g_m)\left(\frac{p-m}{2k} \right)\right\} \right|_{x + \frac{p}{2k}} + O\left(\frac{|p-m|^3}{k^3}\right). 
\end{align*}
If we denote by $K_{err}^k(\cdot, \cdot)$ the matrix coefficients of the operator
$\phi^k_{H, \mathcal{U}}(f)\phi^k_{H, \mathcal{U}}(g) - \phi^k_{H, \mathcal{U}}(fg) + \frac{\sqrt{-1}}{2 k}\phi^k_{H, \mathcal{U}}(\{f, g\})$, 
we get
\begin{align*}
    & K_{err}^k\left(x + \frac{p}{2k}, x\right) \\
    &= \frac{1}{k^2}\sum_{m}\left\{- \frac{\sqrt{-1}}{24} \{\Theta(p-m)+ \Theta(m) - \Theta(p) \}f_{p-m}g_m \right.\\
&   \quad \left. \left. + \frac{1}{8}\{-2\langle m, \nabla f_{p-m} \rangle \cdot \langle p-m, \nabla g_{m} \rangle
    + f_{p-m} \cdot {}^t\! (p-m) H(g_m) (p-m)
    + {}^t\! m H(f_{p-m}) m \cdot g_m\}\right\}\right|_{x + \frac{p}{2k}}\\
    &\quad + O(k^{-3}).  \\
    &= \frac{-1}{k^2}\left.\left\{\sum_{m} \frac{\sqrt{-1}}{24} \{\Theta(p-m)+ \Theta(m) - \Theta(p) \}f_{p-m}g_m
    + \left( \mathcal{C}_2^{\mathrm{std}}(f, g) \right)_{p}
    \right\}\right|_{x + \frac{p}{2k}} + O(k^{-3}), 
\end{align*}
where $\mathcal{C}_2^{\mathrm{std}}(f, g) \in C^\infty(U \times T^n)$ is the coefficient of $\hbar$ in the standard Moyal-Weyl star product on $\R^n \times T^n$, see \eqref{eq_std_MW.y}. 
Since we have
\begin{align*}
    \{\Theta(p-m)+ \Theta(m) - \Theta(p)\}f_{p-m}g_m
    =-3 \sqrt{-1}\sum_{i, j, l} \Theta_{ijl}\left( \frac{\del f_{p-m} }{\del \theta_i}\frac{\del^2 g_m}{\del \theta_j \del \theta_l}  +\frac{\del^2 f_{p-m} }{\del \theta_j \del \theta_l} \frac{\del g_m}{\del \theta_i} \right), 
\end{align*}
we see that the second coefficient in the star product induced by the strict deformation quantization $\{\phi^k_{H, \mathcal{U}}\}_k$ should be given by 
\begin{align*}
    (f, g) \mapsto \frac{1}{8}\sum_{i, j, l} \Theta_{ijl}\left( \frac{\del f }{\del \theta_i}\frac{\del^2 g}{\del \theta_j \del \theta_l}  +\frac{\del^2 f }{\del \theta_j \del \theta_l} \frac{\del g}{\del \theta_i} \right)  + \mathcal{C}_2^{\mathrm{std}}(f, g).
\end{align*}
It is clear that we can continue this Taylor expansion with respect to $k^{-1}$ for higher orders, and get the higher coefficient of the star product recursively. 
Summerizing, we get the followings. 

\begin{thm}\label{thm_star_gen.y}
Let $(X^{2n}, \omega, L, \nabla)$ be a prequantized symplectic manifold equipped with a proper Lagrangian fiber bundle $\mu \colon X \to B$ with connected fibers. 
Assume we are given a horizontal distribution $H$ satisfying the condition (H) in \ref{subsec_construction_general.y}. 
Then, there exists a unique star product $\star_H$ on $C^\infty(X)[[\hbar]]$ satisfying the followings. 
\begin{enumerate}
    \item If we denote by $\mathcal{C}_j^H(f, g)$ the $j$-th coefficient of $\hbar$
in $f \star_{H} g$ for $f, g \in C^\infty(X)$. 
Then we have, for all $f , g \in C_c^\infty(X)$ and $l \in \N$, 
\begin{align*}
        \left\| \phi^k_{H, \mathcal{U}}(f) \phi^k_{H, \mathcal{U}}(g) -
        \sum_{j = 0}^l \left( \frac{-\sqrt{-1}}{k}\right)^j \phi^k_{H, \mathcal{U}}\left(\mathcal{C}_j^{H}(f, g)\right)
        \right\| =O\left(\frac{1}{k^{l+1}}\right)
\end{align*}
as $k \to \infty$. 
Here $\mathcal{U}$ is any choice of open covering of $B$ satisfying the conditions in (U) in subsection \ref{subsec_construction_general.y}. 
\item For $h_1, h_2 \in C^\infty(B)$, we have
\begin{align*}
    \mu^*h_1 \star_H \mu^*h_2 = \mu^* (h_1h_2). 
\end{align*}
In other words, the commutative algebra $C^\infty(B)$ canonically embeds into $(C^\infty(X)[[\hbar]], \star_H)$. 

\item 
Up to order $2$, this star product $\star_H$ coincides with the Fedosov's star product $*_{H, 0}$, corresponding to the connection $\nabla^{TX ,H}$ defined in Definition \ref{def_symp_conn} and $0 \in \hbar \Omega^2(X)[[\hbar]]$, i.e., we have
\begin{align*}
    \mathcal{C}_j^{H}(f, g) = \mathcal{C}_j^{H, 0}(f, g) \mbox{ for }j = 0, 1, 2. 
\end{align*}
The second coefficient is explicitely given by, choosing a local action-angle coordinate, 
\begin{align}\label{eq_star_second}
    \mathcal{C}_2^{H}(f, g) = \frac{1}{8}\sum_{i, j, l} \Theta_{ijl}\left( \frac{\del f }{\del \theta_i}\frac{\del^2 g}{\del \theta_j \del \theta_l}  +\frac{\del^2 f }{\del \theta_j \del \theta_l} \frac{\del g}{\del \theta_i} \right)  + \mathcal{C}_2^{\mathrm{std}}(f, g). 
\end{align}
Here $\mathcal{C}_2^{\mathrm{std}}$ is the second term in the standard Moyal-Weyl star product on $\R^n \times T^n$, and the locally defined symmetric three tensor $\Theta$ is defined in \eqref{eq_three_tensor.y}. 
\end{enumerate}

\end{thm}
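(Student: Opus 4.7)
The plan is to make the formal expansion carried out just before the theorem statement into a rigorous construction, then verify the three items in turn. The computation already identifies $\mathcal{C}_0^H(f,g) = fg$, $\mathcal{C}_1^H(f,g) = \tfrac{1}{2}\{f,g\}$, and gives the local formula for $\mathcal{C}_2^H$ displayed in \eqref{eq_star_second}. For each $j \ge 0$, I would define $\mathcal{C}_j^H(f,g)$ locally on each $U \in \mathcal{U}$ by continuing the Taylor expansion of the matrix element $K^k_{\phi^k(f)\phi^k(g)}(x+\tfrac{p}{k},x)$ in powers of $k^{-1}$ and reading off the $p$-th Fourier coefficient. One then checks that $\mathcal{C}_j^H$ is a well-defined bi-differential operator on $X$: independence from the action-angle chart follows from the coordinate-free definition of $\phi^k_{H,\mathcal{U}}$, and independence from $\mathcal{U}$ follows from Proposition \ref{prop_open_cov.y}.

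For item (1), the norm bound $O(k^{-(l+1)})$ is established by the same pattern as the proof of Theorem \ref{thm_main_v2.y}. Truncating the Taylor expansions of $f_{p-m}$ and $g_m$ around the midpoint $x+\tfrac{p}{2k}$ at order $l$, using Lemma \ref{lem_arg.y} to control the phase factors arising from the horizontal lift, and invoking the rapid decay of Fourier coefficients $\|f_m\|_{C^N} \le C_N (1+|m|)^{-N}$, one bounds the remainder matrix element by $C'(1+|p|)^{-2}\,k^{-(l+1)}$; Lemma \ref{lem_est_norm.y} then converts this into the required operator norm estimate. Associativity of $\star_H$ follows from the associativity of operator composition together with the asymptotic expansion, and uniqueness follows because the $p$-th Fourier coefficient of $\mathcal{C}_j^H(f,g)$ is determined by the matrix element of $\phi^k$ at lattice distance $p/k$ from the diagonal. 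Item (2) is then immediate from the example after Definition \ref{def_rep_gen.y}: $\phi^k_{H,\mathcal{U}}(\mu^*h)$ is the diagonal multiplication operator by the values of $h$ on $B_k$, so the products compose exactly, with no higher-order corrections.

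The main obstacle is item (3), namely matching the locally computed $\mathcal{C}_2^H$ with the Fedosov coefficient $\mathcal{C}_2^{H,0}$. The agreement in degrees $0$ and $1$ is automatic. For degree $2$, I would unwind the first two iterations of Fedosov's recursive construction for the torsion-free symplectic connection $\nabla^{TX,H}$ of Definition \ref{def_symp_conn} with $a=0$, and write out the resulting $\hbar^2$-term in a local action-angle coordinate. The computation splits this term into the flat (Moyal-Weyl) contribution, which accounts for $\mathcal{C}_2^{\mathrm{std}}$, and a correction linear in the Christoffel symbols $\Gamma_{x_l x_j}^{\theta_i}$; since Fedosov's construction only sees the fully symmetrized part of the connection, the correction involves exactly the symmetric tensor $\Theta_{ijl}$ of \eqref{eq_three_tensor.y}, and comparing with \eqref{eq_star_second} gives the equality.

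The delicate points in item (3) are keeping track of the precise sign conventions, factors of $\hbar$, and the role of the symplectic form in raising indices so that the two calculations align. The fact that only the fully symmetrized combination $\Theta_{ijl}$ (rather than the unsymmetrized $\widetilde{\Gamma}^{\theta_i}_{x_l x_j} = -\partial_l A_j^i$) enters the formula is the crucial consistency check: it is precisely the reflection of the torsion-free and symplectic symmetrization built into the definition of $\nabla^{TX,H}$, and explains why our construction recovers Fedosov's, rather than something that depends on the a priori non-symmetric data $A$.
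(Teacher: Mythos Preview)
Your proposal is essentially correct and tracks the paper's proof closely for items (1) and (2): the paper likewise points to the Taylor expansion carried out before the theorem, invokes the same operator-norm machinery as in Theorem \ref{thm_main_v2.y} for the error estimates, and derives (2) from the fact that $\phi^k_{H,\mathcal{U}}(\mu^*h)$ is a diagonal multiplication operator.

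The one substantive difference is in item (3). You propose to unwind the first two iterations of Fedosov's recursive construction and extract the $\hbar^2$-term directly. The paper instead invokes a known closed-form expression for the second-order Fedosov coefficient (quoting \cite[Proposition 2.13]{FutakiGravy2018}):
\[
f *_{\nabla,0} g = fg + \tfrac{\hbar}{2}\{f,g\} + \tfrac{\hbar^2}{8}\,\omega^{ij}\omega^{kl}\,\nabla^2_{ik}f\,\nabla^2_{jl}g + O(\hbar^3),
\]
where $\nabla^2_{XY}f = (XY - \nabla_X Y)f$ is the second covariant derivative. Plugging in the Christoffel symbols of $\nabla^{TX,H}$ from Definition \ref{def_symp_conn} then yields the $\Theta_{ijl}$-correction to $\mathcal{C}_2^{\mathrm{std}}$ in one line. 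Your route through the Fedosov iteration would certainly arrive at the same formula, but it is considerably more laborious and carries more risk of sign or factor errors; the cited formula is the efficient shortcut. Your observation that only the fully symmetrized $\Theta_{ijl}$ can appear is exactly right and is confirmed transparently by the second-covariant-derivative formula, since $\nabla^2_{x_l x_j}f = \partial_{x_l}\partial_{x_j}f + \Theta_{ijl}\,\partial_{\theta_i}f$ already contains only the symmetrized combination.
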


\begin{proof}
For (1), in addition to the above argument, we must estimate the operator norms of error terms. 
It is done essentially in the same way as the proof of Theorem \ref{thm_main_v2.y}. 
(2) follows from the fact that for $h \in C^\infty_c(B)$,  $\phi^k_{H, \mathcal{U}}(\mu^*h)$ is a multiplication operator by $h$, so we have $\phi^k_{H, \mathcal{U}}(\mu^*h_1)\phi^k_{H, \mathcal{U}}(\mu^*h_2) = \phi^k_{H, \mathcal{U}}(\mu^*(h_1h_2))$. 

For (3), the formula \eqref{eq_star_second} follows from the above computations. 
To check the desired coincidence, we use the formula for second order term in $*_{\nabla, 0}$ for torsion-free symplectic connection $\nabla$ on $TX$ (see \cite[Proposition 2.13]{FutakiGravy2018})
\begin{align*}
    f *_{\nabla, 0} g  = fg + \frac{\hbar}{2} \{f, g\}  
    + \frac{\hbar^2}{8}\omega^{ij} \omega^{kl} \nabla^2_{ik}f \nabla^2_{jl}g + O(\hbar^3), 
\end{align*}
where $\nabla_{XY}^2 f := (XY - \nabla_XY)f$ is the second covariant derivative, and $(\omega^{ij})_{ij}$ is the inverse matrix of the coefficients in the symplectic form $\omega = \frac{1}{2}\omega_{ij}dx_i \wedge dx_j$. 
Applying this to our case $\nabla = \nabla^{TX, H}$, by Definition \ref{def_symp_conn}, the only nontrivial contribution from the covariant derivative is the terms
\begin{align*}
    &\frac{-\hbar^2}{8}\left\{\omega^{x_l \theta_l} \omega^{x_j \theta_j} (\nabla_{x_l}\del_{x_j}) f \cdot \del_{\theta_l}\del_{\theta_j}g
     + \omega^{\theta_l x_l} \omega^{\theta_j x_j}\del_{\theta_l}\del_{\theta_j}f \cdot (\nabla_{x_l}\del_{x_j}) g 
    \right\} \\
    &= \frac{\hbar^2}{8} 
    \Theta_{ijl}\left\{\del_{\theta_i} f \cdot \del_{\theta_l}\del_{\theta_j}g
    + \del_{\theta_l}\del_{\theta_j}f \cdot \del_{\theta_i}g
    \right\}, 
\end{align*}
so we see that $\mathcal{C}_2^{H, 0}(f, g)$ is also given by the right hand side of \eqref{eq_star_second}, thus we get the result.
\end{proof}

\section{The relation with Berezin-Toeplitz quantization}\label{sec_relation.y}
In this section, we explain the relation between Berezin-Toeplitz quantization and our quantizations. 
Here we restrict our attention to the case of $\R^n \times T^n$ (subsection \ref{subsec_RnTn_relation.y}) and abelian varieties (subsection \ref{subsec_abelian_relation}) with translation invariant complex structures. 
In those cases we have a natural isomorphism between quantum Hilbert spaces using theta basis for $L^2$-holomorphic sections on $L^k$. 
We show in Theorem \ref{prop_rel_dq.y} and Theorem \ref{prop_rel_dq_abelian.y} that, as $k \to \infty$, the operator norm of the difference between our deformation quantization and the Berezin-Toeplitz deformation quantization converges to zero in both cases. 

First we recall the definition of Berezin-Toeplitz deformation quantization (\cite{BMS1994}, \cite{MaMarinescu2008}). 
Let $(X, \omega, J)$ be a symplectic manifold equipped with a compatible complex structure, and $(L, \nabla)$ be a prequantizing line bundle. 
Then the quantum Hilbert space by this K\aaa hler polarization is given by the spaces of $L^2$-holomorphic sections $\{L^2H^0(X_J; L^k)\}_{k \in \N}$. 
For each $k \in \N$, let us denote the orthogonal projection for the subspace $L^2H^0(X_J; L^k)\subset L^2(X_J; L^k)$ by $\Pi_k$. 
\begin{defn}\label{def_BTDQ.y}
  In the above settings, for each $k \in \N$, define a linear operator $T^k \colon C_c^\infty(X) \to \mathbb{B}\left(L^2 H^0(X_J; L^k)\right)$ by
  \begin{align*}
      T^k(f) := \Pi_k M_f \Pi_k^* \colon L^2H^0(X_J; L^k) \to  L^2H^0(X_J; L^k), 
  \end{align*}
  for $f \in C_c^\infty(X)$. 
  Here $M_f$ denotes the multiplication operator by $f$. 
\end{defn}

This sequence has the correct semiclassical behavior in the case where $X$ is compact, as shown by Bordemann, Meinrenken and Schlichenmaier \cite{BMS1994}. 
\begin{fact}[\cite{BMS1994}]
If $(X, \omega, J)$ is a compact K\aaa hler manifold, above sequence $\{T^k\}_k$ is a strict deformation quantization of $C^\infty(X)$, called {\it Berezin-Toeplitz deformation quantization}. 
\end{fact}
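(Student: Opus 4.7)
The plan is to verify the two conditions of Definition \ref{def_strict_DQ} for the Berezin--Toeplitz sequence $\{T^k\}_k$: the norm convergence $\|T^k(f)\| \to \|f\|_{C^0}$ and the semiclassical commutator estimate. I would follow the microlocal approach pioneered by Boutet de Monvel--Guillemin and adapted in \cite{BMS1994}, in which the whole family $\{T^k\}_k$ is recognized as the $S^1$-isotypic decomposition of a single globally-defined generalized Toeplitz operator on a circle bundle over $X$.

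First I would lift the data to the unit disk bundle $D \subset L^*$ whose boundary $Y := \partial D$ is a compact strictly pseudoconvex CR manifold (positivity of $\omega$ is used here to make $L^*$ negative in the Kodaira sense). Fourier decomposition under the $S^1$-action along the fibers identifies $L^2(Y) \simeq \bigoplus_{k \in \Z} L^2(X; L^k)$, and the Hardy space $H^2(Y) \subset L^2(Y)$ correspondingly decomposes as $\bigoplus_{k \geq 0} H^0(X; L^k)$. The key analytic input is then the Boutet de Monvel--Sj\"ostrand description of the Szeg\"o projector $\Pi \colon L^2(Y) \to H^2(Y)$ as a Fourier integral operator with complex phase, microlocally concentrated on the positive symplectic half-line generated by the contact form.

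With this foundation, for $f \in C_c^\infty(X)$ one forms the global Toeplitz operator $T_f := \Pi M_{\pi^* f} \Pi$ on $H^2(Y)$, whose $k$-th isotypic component is exactly $T^k(f)$. The Boutet de Monvel--Guillemin symbol calculus for such generalized Toeplitz operators identifies the principal symbol of $T_f$ with $f$ (restricted to the symplectic cone) and produces, via subprincipal calculation, asymptotic composition formulas of the form
\begin{align*}
  T^k(f) T^k(g) = T^k(fg) - \frac{\sqrt{-1}}{2k} T^k(\{f,g\}) + O(k^{-2}),
\end{align*}
which upon antisymmetrization gives condition (2). For condition (1), the upper bound $\|T^k(f)\| \leq \|f\|_{C^0}$ is immediate since $\Pi$ is a projection and $\|M_f\| = \|f\|_{C^0}$, while the matching lower bound follows by testing on peak sections (coherent states) concentrated near a point $x_0$ where $|f(x_0)|$ is nearly $\|f\|_{C^0}$, using the Bergman kernel asymptotics of Catlin--Zelditch to show $\langle T^k(f) e^k_{x_0}, e^k_{x_0}\rangle / \|e^k_{x_0}\|^2 \to f(x_0)$.

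The main obstacle, and the technical heart of the argument, is the microlocal preparation: establishing the Boutet de Monvel--Sj\"ostrand parametrix for $\Pi$ to sufficient order and setting up the Hermite FIO composition calculus that yields the symbolic formula above. Compactness of $X$ is essential here to guarantee the global FIO structure on $Y$ and uniform control of the $O(k^{-2})$ remainders across the whole manifold. Once this microlocal foundation is in place, the deformation quantization axioms reduce to a symbolic computation of principal and subprincipal symbols plus the coherent-state test described above.
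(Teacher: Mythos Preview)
The paper does not give a proof of this Fact; it is simply cited from \cite{BMS1994} as background for the discussion in Section~\ref{sec_relation.y}. There is therefore no ``paper's own proof'' to compare your proposal against.

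That said, your sketch is an accurate outline of the strategy in \cite{BMS1994}: lift to the circle bundle $Y \subset L^*$, invoke the Boutet de Monvel--Sj\"ostrand description of the Szeg\H{o} projector, and use the Boutet de Monvel--Guillemin symbol calculus for generalized Toeplitz operators to read off the composition asymptotics. One minor anachronism: you cite Catlin--Zelditch Bergman kernel asymptotics for the lower bound in condition~(1), but those results postdate \cite{BMS1994}; the original argument extracts the norm convergence from the Toeplitz symbol calculus itself (essentially by applying the composition formula to $T^k(\bar f)T^k(f)$ and iterating, or equivalently via a spectral argument showing that $T^k(f) - \lambda$ is invertible whenever $|\lambda| > \|f\|_{C^0}$ and, conversely, has approximate null vectors when $\lambda$ is in the range of $f$). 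Your coherent-state argument is a perfectly valid alternative route to the same conclusion.
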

This fact has been generalized in various ways by Ma and Marinescu \cite{MaMarinescu2008}, in particular to certain classes of non-compact K\aaa hler manifolds and orbifolds. 
Their result includes the case of $\R^n \times T^n$ with translation invariant K\aaa hler structure, which is of our interest in the following subsection \ref{subsec_RnTn_relation.y}. 

\subsection{On $\R^n \times T^n$. }\label{subsec_RnTn_relation.y}
In this subsection, we explain the convergence in the case of translation invariant K\aaa hler quantizations on $\R^n \times T^n$. 

First we explain our convention on compatible almost complex structures on $\R^n \times T^n$. 
Let $\mathbb{H}^n := \{\Omega \in M_n(\C) \ | \ \Omega = {}^t\! \Omega, \ \mathrm{Im} \Omega \mbox{ is positive definite } \}$ be the Siegel upper half space. 
Then, if we have an $\mathbb{H}^n$-valued function $\Omega \in C^\infty(\R^n\times T^n; \mathbb{H}^n)$, we get an almost complex structure on $\R^n \times T^n$ by
\begin{align*}
    T^{0, 1}_{(x, \theta)}(\R^n \times T^n) = \mathrm{Span}_{\C}\left\{
    \frac{\del}{\del x_i} + \Omega_{ij}(x, \theta)\frac{\del}{\del \theta_j}
    \right\}_{i = 1}^n. 
\end{align*}
This is compatible with $\omega = {}^t\!dx\wedge d\theta$. 

In this subsection we only consider translation invariant complex structures, i.e., the case where $\Omega$ is constant. 
We denote this complex structure by $J_\Omega$. 

Let us consider the prequantum line bundle $(L, \nabla) = (\underline{\C}, d - \sqrt{-1}{}^t\!xd\theta)$. 
If we have a section $s \in C^\infty(\R^n \times T^n; L^k)$, we denote its Fourier expansions by
\begin{align*}
    s(x, \theta) = \sum_{m \in \Z^n}s_m(x) e^{\sqrt{-1}\langle m, \theta \rangle}. 
\end{align*}
Let $k$ be a positive integer. 
It is easy to see that an orthonormal basis $\{\Psi^k_{\Omega, \frac{l}{k}}\}_{l \in \Z^n}$ of $L^2$-holomorphic sections on $L^k$ is given by
\begin{align}\label{eq_Psi_model.y}
    (\Psi^k_{\Omega, \frac{l}{k}})_m = \delta_{l, m}(2\pi)^{-n/2}a_{k, \mathrm{Im}\Omega}\exp\left(\sqrt{-1}k/2 \ {}^t\! \left(x -l/k\right)\Omega \left(x-l/k\right)\right), 
\end{align}
where $a_{k, \mathrm{Im}\Omega}> 0$ is the normalization constant given by
\begin{align*}
    (a_{k, \mathrm{Im}\Omega})^{-2} = 
    \int_{\R^n}\exp \left(-k {}^t\!x (\mathrm{Im}\Omega) x\right) dx. 
\end{align*}

We can write explicitely the Berezin-Toeplitz deformation quantization using this basis as follows. 
\begin{lem}\label{lem_BT_model.y}
For a function $f \in  C_c^\infty(\R^n\times T^n)$, we write the Fourier expansion of $f$ as $f = \sum_{m \in \Z^n}f_m(x)e^{\sqrt{-1}\langle m, \theta \rangle}$. 
Then we have
\begin{align*}
    &\langle \Psi_{\Omega, b}^k, T^k(f) \Psi_{\Omega, c}^k \rangle \\
    &= (a_{k, \mathrm{Im}\Omega})^{2}\int_{x \in \R^n} f_{k(b-c)}(x) \exp (\sqrt{-1}k/2 \{-{}^t\!(x - b)\overline{\Omega} (x - b) + {}^t\!(x - c){\Omega} (x - c)\}) dx. 
\end{align*}
\end{lem}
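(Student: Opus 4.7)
The proof is a direct computation using the explicit description of the orthonormal basis in \eqref{eq_Psi_model.y}. The plan is to unravel the definition of $T^k(f)$, plug in the explicit formula for the basis sections, and perform the fiber integration over $T^n$ first; then the $\R^n$ integral that remains is exactly the one claimed.

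First, since $\Psi^k_{\Omega,b}, \Psi^k_{\Omega,c}$ are both $L^2$-holomorphic, the orthogonal projections $\Pi_k$ and $\Pi_k^*$ in Definition \ref{def_BTDQ.y} act as the identity on them. Hence
\begin{align*}
\langle \Psi_{\Omega, b}^k, T^k(f) \Psi_{\Omega, c}^k \rangle = \langle \Psi_{\Omega, b}^k, f \cdot \Psi_{\Omega, c}^k \rangle_{L^2(\R^n \times T^n)}.
\end{align*}

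Next, I would substitute the formula \eqref{eq_Psi_model.y}. Both $\Psi^k_{\Omega,b}$ and $\Psi^k_{\Omega,c}$ are monomials in $\theta$ (with frequencies $kb$ and $kc$ respectively) times a Gaussian factor in $x$. Expanding $f(x, \theta) = \sum_{m \in \Z^n} f_m(x) e^{\sqrt{-1} \langle m, \theta\rangle}$ and performing the $\theta$-integral against $e^{\sqrt{-1} \langle kc - kb + m, \theta\rangle}$ picks out only the term $m = k(b-c)$, producing a factor $(2\pi)^n$ that cancels the two normalization factors $(2\pi)^{-n/2}$ coming from the basis.

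After this cancellation, what remains is the $\R^n$-integral of $f_{k(b-c)}(x)$ against the product of the Gaussian factors, where the complex conjugation on $\Psi^k_{\Omega,b}$ turns $\Omega$ into $\overline{\Omega}$ and flips the sign in the exponent. Collecting the $(a_{k, \mathrm{Im}\Omega})^2$ prefactor yields exactly the asserted formula. There is no real obstacle here, the content is purely computational and consists of carefully tracking the Gaussian phases and the normalization constants; the only point requiring care is ensuring that the sign of the exponent and the complex conjugate of $\Omega$ are placed correctly on the $b$-side.
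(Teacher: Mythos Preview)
Your proposal is correct and is precisely the straightforward computation the paper has in mind; the paper itself simply says ``The proof is straightforward.'' Your outline---dropping the projections, expanding $f$ in Fourier modes, integrating over $T^n$ to select $m=k(b-c)$, and collecting the Gaussian factors with the complex conjugate $\overline{\Omega}$ on the $b$-side---is exactly the intended verification.
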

The proof is straightforward. 

Write $\Omega = P + \sqrt{-1}Q$, $P, Q \in M_n(\R)$. 
We consider the splitting $T(\R^n \times T^n) = H_P \oplus \ker d\mu$ given by
\begin{align}\label{eq_horizontal.y}
    H_{P}|_{(x, \theta)} = \mathrm{Span}\left\{ \frac{\del}{\del x_i} + P_{ij}\frac{\del}{\del \theta_j}
    \right\}_{1 \le i \le n}. 
\end{align}
for all $(x, \theta) \in \R^n \times T^n$. 

Recall that the quantum Hilbert space by the real polarization $\mu$ is given by
\begin{align*}
    \mathcal{H}_k = \oplus_{b \in \frac{\Z^n}{k}} H^0(X_b; L^k\otimes |\Lambda|^{1/2}X_b). 
\end{align*}
and we use the orthonormal basis $\{\psi^k_b\}_{b \in \frac{\Z^n}{k}}$, $\psi^k_b = e^{\sqrt{-1}k\langle b, \theta \rangle}\sqrt{d'\theta}$, as in \eqref{eq_basis_model.y}. 
We consider the strict deformation quantization in Definition \ref{def_rep_gen.y} associated to the horizontal distribution $H_P$ and the trivial covering $\mathcal{U} = \{\R^n\}$ (see Remark \ref{rem_open_cov.y}), denoted by $\{\phi^k_{H_P}\}_k$. 

For each $k \in \N$, we get the canonical isomorphism between quantum Hilbert spaces, 
\begin{align}\label{isom_hilb_model.y}
    \mathcal{H}_k \simeq L^2H^0(X_{J_\Omega}; L^k), \ \psi^k_b \mapsto \Psi^k_{\Omega, b}, 
\end{align}
for each $b \in \frac{\Z^n}{k}$. 
Using this isomorphism, we can describe the relation between Berezin-Toeplitz deformation quantization and our deformation quantization as follows.

\begin{thm}\label{prop_rel_dq.y}
Consider the complex structure on $\R^n \times T^n$ associated with $\Omega = P + \sqrt{-1}Q$. 
For all $f \in C_c^\infty(\R^n \times T^n)$, we have
\begin{align*}
  \lim_{k \to \infty} \| \phi^k_{H_P}(f) - T^k(f)\| =  0. 
\end{align*}
Here we use the isomorphism of Hilbert spaces \eqref{isom_hilb_model.y}. 
\end{thm}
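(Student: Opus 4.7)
The plan is to compare $\phi^k_{H_P}(f)$ and $T^k(f)$ at the level of matrix coefficients in the orthonormal basis $\{\psi^k_b\}_{b \in \Z^n/k}$, identified with $\{\Psi^k_{\Omega,b}\}$ via \eqref{isom_hilb_model.y}, and then invoke Lemma \ref{lem_est_norm.y} to upgrade the matrix-element estimate to an operator-norm estimate. Write $K^\phi_k$ and $K^T_k$ for the two matrix coefficients. The first key observation is that on the real-polarization side the phase formula \eqref{eq_phase.y} collapses for the constant distribution $H_P$: with $A = P\,dx$ constant, the curve $\alpha(t) = tP\bm{u}$ appearing in the proof of Lemma \ref{lem_arg.y} is linear in $t$, so both the boundary term and the odd integrand in the phase vanish identically, leaving
\begin{align*}
K^\phi_k\!\left(x + \tfrac{m}{k},\, x\right) = f_m\!\left(x + \tfrac{m}{2k}\right)
\end{align*}
with no phase correction.

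For the Berezin--Toeplitz side I would apply Lemma \ref{lem_BT_model.y}, split $\Omega = P + \sqrt{-1}Q$ and $\bar{\Omega} = P - \sqrt{-1}Q$, and complete the square in the exponent around the midpoint $(b+c)/2$. The substitution $z = \sqrt{k}\,(x - (b+c)/2)$ together with the elementary identity $a_{k,Q}^2\, k^{-n/2} = \pi^{-n/2}(\det Q)^{1/2}$ then produces
\begin{align*}
K^T_k\!\left(x + \tfrac{m}{k},\, x\right) = \pi^{-n/2}(\det Q)^{1/2}\, e^{-m^{T}Qm/(4k)} \int_{\R^n} f_m\!\left(x + \tfrac{m}{2k} + \tfrac{z}{\sqrt{k}}\right) e^{-z^{T}Qz \,+\, \sqrt{-1}\, m^{T}Pz/\sqrt{k}}\, dz.
\end{align*}
Using $\int e^{-z^{T}Qz}\,dz = \pi^{n/2}(\det Q)^{-1/2}$ one may rewrite $K^\phi_k$ as the same prefactor times the same Gaussian integrated against the constant $f_m(x + m/(2k))$, so the pointwise difference $E^k_m(x) := K^T_k - K^\phi_k$ becomes a single Gaussian integral whose bracketed integrand tends to zero as $k \to \infty$ (because $e^{-m^TQm/(4k)} \to 1$, $e^{\sqrt{-1} m^TPz/\sqrt{k}} \to 1$ and $f_m(x + m/(2k) + z/\sqrt{k}) \to f_m(x)$).

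The final step is a two-tier estimate of $\sum_{m \in \Z^n} \sup_x |E^k_m(x)|$. For each fixed $m$, dominated convergence inside the $z$-integral, with majorant $2\|f_m\|_{C^0}\, e^{-z^{T}Qz}$, yields $\sup_x |E^k_m(x)| \to 0$ as $k \to \infty$; the supremum is effectively over a fixed compact set because the support of $f_m$ is $k$-independent. Independently, the crude bound $\sup_x |E^k_m(x)| \le 2\|f_m\|_{C^0} \le 2 C_N (1 + |m|)^{-N}$, together with the rapid decay of Fourier coefficients of $f \in C^\infty_c$ already used in Lemma \ref{lem_bdd_DQ.y}, supplies a summable majorant in $m$ for any $N > n$. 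A second application of dominated convergence (with counting measure on $\Z^n$) then gives $\sum_m \sup_x |E^k_m(x)| \to 0$, and Lemma \ref{lem_est_norm.y} closes the argument. The main delicacy I anticipate is the first, pointwise-in-$m$ limit, where the $k$-dependent pieces (the Gaussian prefactor $e^{-m^TQm/(4k)}$, the oscillation $e^{\sqrt{-1}m^TPz/\sqrt{k}}$, and the argument shift $z/\sqrt{k}$) must be controlled simultaneously; once they are absorbed into the $z$-integrable majorant above, the rest is routine.
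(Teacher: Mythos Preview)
Your argument is correct and follows the same overall architecture as the paper's proof: compare the two operators entry-by-entry in the basis $\{\psi^k_b\}\leftrightarrow\{\Psi^k_{\Omega,b}\}$, and then invoke Lemma~\ref{lem_est_norm.y}. The execution differs in two respects. First, the paper immediately reduces to the case $P=0$ by the coordinate change $(x,\theta)\mapsto(x,-Px+\theta)$, which eliminates your oscillatory factor $e^{\sqrt{-1}\,m^{T}Pz/\sqrt{k}}$ and makes the Gaussian integral purely real; you instead keep $P$ and absorb the oscillation into the dominated-convergence majorant. Second, the paper isolates the Gaussian approximation as a separate lemma (there called Lemma~\ref{lem_delta.y}) with the explicit bound $|g(0)-(a_{k,Q})^{2}\int g\,e^{-k\,{}^{t}xQx}\,dx|\le C_Q\|\nabla g\|_{C^0}/\sqrt{k}$, and from it derives the quantitative matrix-element estimate $|K^T_k-K^\phi_k|\le C_N k^{-1/2}(1+|p|)^{-N}$; summing over $p$ then gives $\|\phi^k_{H_P}(f)-T^k(f)\|=O(k^{-1/2})$. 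Your two-tier dominated-convergence argument gives only $o(1)$, which is all the statement asks for, and is arguably more elementary. One small point to tighten: dominated convergence in $z$ gives pointwise-in-$x$ convergence of $E^k_m(x)$, not $\sup_x|E^k_m(x)|\to 0$; the clean fix is to note that $\sup_x|g_k(x,z)|\to 0$ for each fixed $z$ (by uniform continuity of $f_m$ and the $x$-independence of the other factors), bound $\sup_x|E^k_m(x)|$ by $\int\sup_x|g_k(x,z)|\,e^{-z^TQz}\,dz$, and apply DCT to that. Your compact-support remark is not quite the right justification, since $K^T_k$ has full support in $x$.
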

\begin{proof}
By the coordinate change
\begin{align*}
    \R^n \times T^n \to \R^n \times T^n, \ (x, \theta) \mapsto (x, -Px + \theta), 
\end{align*}
the Berezin-Toeplitz deformation quantization and our deformation quantization, as well as the isomorphism \eqref{isom_hilb_model.y}, map to the ones for $\Omega = \sqrt{-1}Q$. 
So we may assume $P = 0$. 

First we note the following. 
\begin{lem}\label{lem_delta.y}
There exists a constant $C_Q > 0$ only depending on $Q$ such that, 
for any function $g \in C_c^\infty(\R^n)$, we have
\begin{align*}
    \left| g(0) - (a_{k, Q})^{2} \int_{\R^n} g(x)\exp(-k {}^t\! xQx)dx
    \right|
    \le \frac{C_Q\|\nabla g\|_{C^0}}{\sqrt{k}}. 
\end{align*}
\end{lem}
\begin{proof}
This follows from the estimate
\begin{align*}
    |g(x) - g(0)| \le \|\nabla g\| |x|
\end{align*}
and
\begin{align*}
    (a_{k, Q})^{-2} = \int_{\R^n}  \exp(-k {}^t\! xQx) dx
    = k^{-n/2} \int \exp(- {}^t\! xQx) dx, \\
    \int_{\R^n} |x| \exp(-k {}^t\! xQx) dx
    = k^{-(n+1)/2} \int |x|\exp(- {}^t\! xQx) dx. 
\end{align*}
\end{proof}

\begin{lem}\label{lem_diff_dq.y}
Assume we are given a function $f = \sum_{m \in \Z^n}f_m(x) e^{\sqrt{-1}\langle m, \theta \rangle} \in C^\infty(\R^n \times T^n)$.
For each $N \in \N$, there exists a constant $C_N > 0$, which only depends on $f$ and $Q$, such that, for all $k \in \N$, $p \in \Z^n$ and $x_0 \in \frac{\Z^n}{k}$, we have
\begin{align*}
    |\langle \Psi^k_{\Omega, x_0 + p/k}, T^k(f) \Psi^k_{\Omega, x_0}\rangle - f_{p}(x_0 + p/(2k))| \le \frac{C_N}{\sqrt{k}(1 + |p|)^N}. 
\end{align*}
\end{lem}
\begin{proof}
We have
\begin{align*}
    &-{}^t\!(x - (x_0 + p/k))\overline{\Omega} (x - (x_0 + p/k)) + {}^t\!(x - x_0){\Omega} (x -x_0)\\
    &=  \sqrt{-1}\left\{2{}^t\!(x - (x_0 + p/(2k)))Q (x - (x_0 + p/(2k))) + \frac{1}{2k^2}{}^t\!pQp\right\}. 
\end{align*}
By Lemma \ref{lem_BT_model.y}, we have
\begin{align*}
    &\langle \Psi^k_{\Omega, x_0 + p/k}, T^k(f) \Psi^k_{\Omega, x_0}\rangle \\
    &= (a_{k, Q})^{2}\exp(-(4k)^{-1} {}^t\!pQp) \\
    & \cdot \int_{x \in \R^n} f_{p}(x) 
    \exp\{ - k {}^t\!(x - (x_0 + p/(2k)))Q (x - (x_0 + p/(2k)))\} dx \\
    &= (a_{k, Q})^{2}\exp\left(-\frac{1}{4k} {}^t\!pQp\right)
    \cdot \int_{x \in \R^n} f_{p}(x +(x_0 + p/(2k)) ) 
    \exp(- k {}^t\!xQ x ) dx
\end{align*}
By Lemma \ref{lem_delta.y}, we have
\begin{align*}
  & \left|f_p(x_0 + p/(2k))- (a_{k, Q})^{2}\int_{x \in \R^n} f_{p}(x +(x_0 + p/(2k)) ) 
    \exp(  - k {}^t\!xQ x ) dx\right| \\
   & \le \frac{C_Q \|\nabla f_p\| }{\sqrt{k}}. 
\end{align*}
Since we have $|1 - e^{-y}| \le y$ for any $y \ge 0$, we see 
\begin{align*}
    \left|\langle \Psi^k_{\Omega, x_0 + p/k}, T^k(f) \Psi^k_{\Omega, x_0}\rangle - f_p(x_0 + p/(2k))\right| 
    &\le  \frac{C_Q \|\nabla f_p\| }{\sqrt{k}} + \|f_p\| \cdot \frac{{}^t\!pQp}{4k} \\
    &\le \frac{C_N}{\sqrt{k}(1 + |p|)^{N}}, 
\end{align*}
for some constant $C_{N}$ independent of $k$, $x$, $p$, since the Fourier coefficients are rapidly decreasing. So we get the result. 
\end{proof}

Now we prove Theorem \ref{prop_rel_dq.y}. 
Using the isomorphism \eqref{isom_hilb_model.y}, we regard $T^k(f)$ as an operator on $\mathcal{H}_k$. 
Using Lemma \ref{lem_est_norm.y} and Lemma \ref{lem_diff_dq.y}, we have
\begin{align*}
    \|\phi^k_{H_P}(f) - T^k(f)\|
    &\le \sum_{p \in \Z^n}  \sup_{x \in \frac{\Z^n}{k}} \left\{\left|\langle \Psi^k_{\Omega, x_0 + p/k}, T^k(f) \Psi^k_{\Omega, x_0}\rangle - f_{p}(x_0 + p/(2k))\right|\right\} \\
   & \le \frac{C_{n+1}}{\sqrt{k}} \sum_{p \in \Z^n} \frac{1}{(1 + |p|)^{n+1}}, 
\end{align*}
so we get the result. 

\end{proof}

\subsection{On Abelian varieties}\label{subsec_abelian_relation}
In this subsection we show the relation between Berezin-Toeplitz deformation quantization and our deformation quantization in the case of Abelian varieties. 
For works relating geometric quantization on Abelian varieties by different polarizations, see for example \cite{BMN2010}. 

Let $X = (\R/\Z)^n \times T^n$ be the $2n$-dimensional torus which is obtained by the $\Z^n$-action on $\R^n \times T^n$ considered in the subsection \ref{subsec_RnTn_relation.y}, where $m \in \Z^n$ acts by the symplectomorphism
\begin{align*}
    (x, \theta) \mapsto (x + m, \theta), 
\end{align*}
and consider the induced symplectic structure $\omega = {}^t\! dx \wedge d\theta$ on $X$. 
We get the induced Lagrangian fibration $\mu \colon X \to \R^n/\Z^n$. 
The $\Z^n$-action lifts to an action on the prequantizing line bundle $(\underline{\C}, \nabla = d - \sqrt{-1}{}^t\!xd\theta)$ on $\R^n \times T^n$ by
\begin{align}\label{eq_equivariance_L.y}
    (x, \theta, v) \mapsto (x+m, \theta, e^{\sqrt{-1}\langle m, \theta \rangle} v), 
\end{align}
preserving $\nabla$. 
So we get the induced prequantizing line bundle on $X$, denoted by $(L, \nabla)$. 
In this case, the set of $k$-Bohr-Sommerfeld point is given by $B_k = (\frac{1}{k}\Z / \Z)^n\subset (\R /\Z)^n$. 
A section $s \in C^\infty(X; L^k)$ is identified with a section $\tilde{s} \in C^\infty(\R^n \times T^n; \underline{\C})$ with the periodicity property
\begin{align*}
  \tilde{s}(x + m, \theta) = e^{\sqrt{-1}\langle km, \theta \rangle}\tilde{s}(x, \theta).    
\end{align*}

Fix an element $\Omega \in \mathbb{H}^n$. 
From a translation invariant complex structure $\R^n \times T^n$ given by $\Omega$ as in subsection \ref{subsec_RnTn_relation.y}, we get the induced translation invariant $\omega$-compatible complex structure on $X$, also denoted by $J_\Omega$. 
An orthonormal basis $\{\Theta^k_{\Omega, b}\}_{b \in B_k}$ of $H^0(X_{J_\Omega}; L^k)$ is given by the following formula for its lift $\widetilde{\Theta}^k_{\Omega, b} \in C^\infty(\R^n \times T^n; L^k)$, 
\begin{align}\label{eq_Psi_model_abelian.y}
    \widetilde{\Theta}^k_{\Omega, b}= \sum_{l \in \Z^n, [\frac{l}{k}] = b} \Psi^k_{\Omega, \frac{l}{k}}, 
\end{align}
where $\Psi^k_{\Omega, \frac{l}{k}}$ is the basis of $L^2H^0((\R^n \times T^n)_{J_\Omega}; L^k)$ given in \eqref{eq_Psi_model.y}. 
It is easy to see that this basis coincides with the classical Theta basis (see \cite[Section 2.3]{BMN2010}). 

On the other hand, the orthonormal basis $\{\psi_b^k\}_{b \in \frac{\Z^n}{k}}$ for the quantum Hilbert space by the real polarization on $\R^n \times T^n$ as in \eqref{eq_basis_model.y} induces the orthonormal basis for the quantum Hilbert space $\mathcal{H}_k$ by the real polarization $\mu$ on $X$, since $\{\psi_b^k\}_b$ satisfies the equivariance property with respect to the $\Z^n$-action \eqref{eq_equivariance_L.y}. 
We denote the induced orthonormal basis on $\mathcal{H}_k$ by $\{\vartheta_b^k\}_{b \in B_k}$. 
Using these basis, we get the canonical isomorphism of quantum Hilbert spaces, 
\begin{align}\label{eq_isom_Hilb_abelian.y}
    \mathcal{H}_k \simeq H^0(X_{J_\Omega}; L^k), \ \vartheta^k_b \mapsto \Theta^k_{\Omega_s, b}, 
\end{align}

Corresponding to $\Omega = P + \sqrt{-1}Q$, we consider the horizontal distribution $H_P \subset TX$ induced from \eqref{eq_horizontal.y}.

We have the following relation between Berezin-Toeplitz deformation quantization and our deformation quantization for this case. 
\begin{thm}\label{prop_rel_dq_abelian.y}
Consider the complex structure on $X  = (\R/\Z)^n \times T^n$ associated with $\Omega = P + \sqrt{-1}Q$. 
Choose any open covering $\mathcal{U}$ of $(\R/\Z)^n$ satisfying the conditions in (U) in subsection \ref{subsec_construction_general.y}, and consider the associated strict deformation quantization $\{\phi^k_{H_P, \mathcal{U}}\}_k$. 
For all $f \in C_c^\infty(X)$, we have
\begin{align*}
  \lim_{k \to \infty} \| \phi^k_{H_P, \mathcal{U}}(f) - T^k(f)\| = 0. 
\end{align*}
Here we use the isomorphism of Hilbert spaces \eqref{eq_isom_Hilb_abelian.y}. 
\end{thm}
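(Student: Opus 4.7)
My plan is to adapt the strategy of the proof of Theorem \ref{prop_rel_dq.y}, combining an ``unfolding'' identity that writes each theta-basis matrix element on $X$ as a sum of Gaussian-basis matrix elements on the universal cover $\R^n \times T^n$, with the already-proven estimates on $\R^n \times T^n$. First I would establish the identity
\begin{align*}
\langle \Theta^k_{\Omega, b}, T^k_X(f) \Theta^k_{\Omega, c}\rangle_X = \sum_{m \in \Z^n} \langle \Psi^k_{\Omega, b_0/k + m}, T^k_{\R^n\times T^n}(\tilde f) \Psi^k_{\Omega, c_0/k}\rangle_{\R^n\times T^n},
\end{align*}
where $\tilde f$ is the $\Z^n$-periodic lift of $f$ and $b_0/k, c_0/k \in \Z^n/k$ are chosen lifts of $b, c \in B_k$ to a common chart. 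This follows by substituting the theta-series expansion \eqref{eq_Psi_model_abelian.y} for $\widetilde\Theta^k_{\Omega, c}$, changing variables by $\Z^n$-translation in each summand, and using the equivariance of $\widetilde\Theta^k_{\Omega, b}$ together with the $\Z^n$-periodicity of $\tilde f$ to unfold the integration from $[0,1)^n \times T^n$ to $\R^n \times T^n$.

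Next, by Proposition \ref{prop_open_cov.y} the choice of $\mathcal{U}$ is immaterial, so I would fix it refined enough that each chart is the image of an open subset of $\R^n$ of diameter less than $1/2$. Then for $b, c$ in a common chart we can take lifts with $|b_0 - c_0| < k/2$, giving $|b_0 - c_0 + km| \ge k/2$ for every $m \neq 0$. For each summand in the unfolded identity, I would invoke Theorem \ref{prop_rel_dq.y} together with the explicit matrix coefficient computation of Lemma \ref{lem_arg.y} on $\R^n \times T^n$ to obtain an estimate of the form
\begin{align*}
\left|\langle \Psi^k_{\Omega, b_0/k + m}, T^k_{\R^n\times T^n}(\tilde f) \Psi^k_{\Omega, c_0/k}\rangle - \tilde f_{b_0 - c_0 + km}\!\left(\tfrac{b_0+c_0}{2k} + \tfrac{m}{2}\right)\right| \le \frac{C_N}{\sqrt{k}\,(1 + |b_0 - c_0 + km|)^N}.
\end{align*}
Since the Fourier coefficients $\tilde f_p$ of the smooth function $\tilde f$ are rapidly decreasing in $p$, the $m \neq 0$ terms of the sum are then $O(k^{-N})$ for every $N$, and only the $m = 0$ contribution survives.

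The surviving $m = 0$ term equals $\tilde f_{b_0 - c_0}((b_0 + c_0)/(2k))$, which, up to an $O(|b_0 - c_0|^3/k^2)$ phase deviation governed by Lemma \ref{lem_arg.y}, matches the matrix coefficient of $\phi^k_{H_P, \mathcal{U}}(f)$ in the theta basis via \eqref{eq_isom_Hilb_abelian.y}. Combining these estimates, the $(b, c)$-matrix coefficient of $T^k(f) - \phi^k_{H_P, \mathcal{U}}(f)$ is $O(k^{-1/2})$ with rapid decay in $|b_0 - c_0|$. An adaptation of Lemma \ref{lem_est_norm.y} to the finite shift set $B_k$ then bounds the operator norm by $C\,k^{-1/2} \sum_{m \in \Z^n}(1 + |m|)^{-(n+1)} \to 0$, which gives the theorem.

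The main obstacle is that the real-coordinate change $(x, \theta) \mapsto (x, -Px + \theta)$ used to reduce the $\R^n \times T^n$ argument to $P = 0$ does not in general descend to the compact torus; its well-definedness would require $P \in 2\pi M_n(\Z)$. Consequently, matrix-element estimates for a general $\Omega = P + \sqrt{-1}Q$ cannot be carried out in reduced coordinates and must be extracted from the $\R^n \times T^n$ result via the unfolding identity. The most delicate point is controlling the cross-contributions from distinct lattice translates in the theta series; this is handled by refining $\mathcal{U}$ and exploiting the rapid decay of the Fourier coefficients of $\tilde f$.
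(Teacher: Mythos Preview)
Your approach is essentially the paper's: unfold the theta-basis matrix elements to a lattice sum on the cover, use the matrix-element comparison on $\R^n \times T^n$ together with rapid decay of Fourier coefficients to control the $m \neq 0$ translates, and finish with a Schur-type norm bound. Two minor corrections: the estimate you write down is precisely Lemma~\ref{lem_diff_dq.y} (not a consequence of Theorem~\ref{prop_rel_dq.y} or Lemma~\ref{lem_arg.y}), and since $A = P$ is constant the phase in \eqref{eq_phase.y} vanishes identically, so no $O(|b_0 - c_0|^3/k^2)$ deviation appears; your worry about the coordinate change not descending to the torus is real but, as you note, irrelevant once the computation is lifted to $\R^n \times T^n$.
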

\begin{proof}
Let us fix a function $f \in C_c^\infty(X)$. 
We denote the lift of $f$ to $\R^n \times T^n$ by $\tilde{f} \in C^\infty(\R^n \times T^n)$. 
Let us denote by $\tilde{T}^k \colon C_c^\infty(\R^n \times T^n) \to \mathbb{B}\left(L^2H^0((\R^n \times T^n)_{J_\Omega}; L^k)\right)$ the Berezin-Toeplitz deformation quantization on $\R^n \times T^n$. 
First we easily observe that, we can define a bounded operator $\tilde{T}^k(\tilde{f}) \in \mathbb{B}\left(L^2H^0((\R^n \times T^n)_{J_\Omega}; L^k)\right)$ by the same formula as in Definition \ref{def_BTDQ.y}, by the periodicity of $\tilde{f}$. 
For $b, c \in B_k = (\frac{1}{k}\Z/\Z)^n$, choose any lift $\bar{b}, \bar{c} \in \frac{\Z^n}{k}$. 
Then we have
\begin{align*}
    \langle \Theta_b^k, T^k(f)\Theta_c^k \rangle
    = \sum_{l \in \Z^n} \langle \Psi_{\bar{b} + l}^k, \tilde{T}^k(\tilde{f}) \Psi_{\bar{c}}^k\rangle. 
\end{align*}
Note that if we denote the Foourier expansion $f(x, \theta) = \sum_m f_m(x) e^{\sqrt{-1}\langle m, \theta \rangle}$, we have
\begin{align*}
    \left| \langle \Psi_{x_0 + \frac{l}{k}}^k, \tilde{T}^k(\tilde{f}) \Psi_{x_0}^k\rangle\right| \le \|f_{l}\|_{C^0}. 
\end{align*}
Combining this and Lemma \ref{lem_diff_dq.y}, as well as the fact that the Fourier coefficients of $f$ are rapidly decreasing, we easily get the result. 
The details are left to the reader. 
\end{proof}

\section{Appendix}

\subsection{A proof of Proposition \ref{prop_model_norm.y}}
In this subsection, we give a detailed proof of Proposition \ref{prop_model_norm.y}. 

\begin{proof}[Proof of Proposition \ref{prop_model_norm.y}]
For simplicity we only prove in the case where $n = 1$; for general $n$ the proof is essentially the same.
Since the maps $\phi^k$ are adjoint-preserving, it is enough to show in the case when $f \in C_c^\infty(X; \R)$. 
In this case, $\phi^k(f)$ is a self-adjoint operator for each $k$. 

First we show that $\sup_{k \to \infty} \|\phi^k(f)\| \le \|f\|_{C^0}$. 
Assume the contrary, and take $\epsilon > 0$ with $\epsilon < \|f\|_{C^0}$ so that 
\begin{align}\label{ass_model_contr.y}
    \sup_{k \to \infty} \|\phi^k(f)\| > \|f\|_{C^0} + \epsilon. 
\end{align}
Let us denote the Fourier expansion of $f$ in the $T$-direction as $\sum_{m \in \Z}f_m(x)e^{ \sqrt{-1} \langle m, \theta \rangle}$. 
Since $f$ is smooth and compactly supported, there exists a positive integer $M \in \N$ such that
\begin{align*}
    \sum_{|m| > M}\|f_m\|_{C^0} < \frac{\epsilon}{100}. 
\end{align*}
Fix such $M$. Let $\tilde{f} \in C_c^\infty(X)$ be the function defined by
\begin{align}\label{eq_def_tildef.y}
    \tilde{f} := \sum_{|m| \le M } f_m(x)e^{ \sqrt{-1} \langle m, \theta \rangle}. 
\end{align}
It is easy to see that
\begin{align}\label{eq_diff_sup.y}
    \|f - \tilde{f} \|_{C^0} < \frac{\epsilon}{100}. 
\end{align}
Moreover, applying Lemma \ref{lem_est_norm.y}, for all $k\in \N$ we have
\begin{align*}
    \|\phi^k(f) - \phi^k(\tilde{f})\| \le  \sum_{|m| > M}\|f_m\|_{C^0} < \frac{\epsilon}{100}.
\end{align*}
So by our assumption \eqref{ass_model_contr.y}, we have
\begin{align}\label{ass_model_contr2.y}
\sup_{k \to \infty} \|\phi^k(\tilde{f})\| > \|f\|_{C^0} + \frac{\epsilon}{2}.
\end{align}

Since $f_m \in C_c^\infty(\R)$ is smooth and compactly supported for each $m$, there exists $\alpha > 0$ such that for all $m \in \Z$ with $|m | \le M$, we have,
\begin{align}\label{eq_unif_conti.y}
    |f_m(x) - f_m (y)| < \frac{\epsilon}{100(2M+1)} \mbox{ for all } |x - y| \le \alpha . 
\end{align}
Let us choose $k_0 \in \N$ such that
\begin{align}\label{eq_def_k_model.y}
    k_0 \ge \frac{100M\|f\|_{C^0}}{\epsilon \alpha}, \mbox{ and }
    \|\phi^{k_0}(\tilde{f})\| \ge \|f\|_{C^0} + \frac{\epsilon}{2}. 
\end{align}
This is possible by \eqref{ass_model_contr2.y}. 
Notice that $\phi^{k_0}(\tilde{f})$ is a self-adjoint operator on $\mathcal{H}_{k_0}$, and by \eqref{eq_def_tildef.y}, there exists a finite interval $[a, b] \subset \R$ such that
\begin{align*}
    \phi^{k_0}(\tilde{f}) = P_{[a, b]}\phi^{k_0}(\tilde{f})P_{[a, b]}.  
\end{align*}
Indeed, it is enough to take $[a, b]$ so that $\mathrm{supp}(f) \subset (a + \frac{m}{2{k_0}}, b - \frac{m}{2{k_0}})$. 
So the operator $\phi^{k_0}(\tilde{f})$ can be regarded as a linear operator on the finite dimensional Hilbert space $\mathcal{H}_{k_0}|_{[a,b]}$. 
In particular, by \eqref{eq_def_k_model.y}, we can take an eigenvalue $\lambda \in \R$ of $\phi^{k_0}(\tilde{f})$ with 
\begin{align}\label{eq_lambda.y}
    |\lambda| \ge \|f\|_{C^0} + \frac{\epsilon}{2}. 
\end{align}
We take a normalized eigenvector $v \in \mathcal{H}_{k_0}$ for $\lambda$, 
\begin{align*}
    \phi^{k_0}(\tilde{f})v = \lambda v, \ \|v\| = 1.  
\end{align*}

\begin{lem}\label{lem_pigeonhole}
    There exists a closed interval $I = [s, t] \subset \R$ such that
    \begin{itemize}
        \item $|I| =( t-s = )  \alpha$, 
        \item If we define the interval $J := [s - \frac{M}{{k_0}}, t + \frac{M}{{k_0}}]$, we have
        \begin{align*}
            \|P_I v\|^2 \ge \frac{{k_0}\alpha}{{k_0}\alpha + 2M} \|P_J v\|^2. 
        \end{align*}
    \end{itemize}
\end{lem}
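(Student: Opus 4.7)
The plan is to prove this by a sliding-window averaging argument, exploiting that the ratio $\frac{k_0\alpha}{k_0\alpha + 2M}$ is exactly the ratio of the length of $I$ to the length of $J$.

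First, I would expand $v$ in the orthonormal basis as $v = \sum_{b \in B_{k_0}} c_b \psi_b^{k_0}$, so that $\sum_b |c_b|^2 = \|v\|^2 = 1$ and, for any interval $K \subset \R$,
\begin{align*}
\|P_K v\|^2 = \sum_{b \in B_{k_0} \cap K} |c_b|^2.
\end{align*}
Note that this sum is supported on the finite set $B_{k_0} \cap [a,b]$, so only finitely many $c_b$ are nonzero.

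Next, for each $s \in \R$ I would define the two step functions
\begin{align*}
P(s) := \|P_{[s, s + \alpha]} v\|^2, \qquad Q(s) := \|P_{[s - M/k_0,\, s + \alpha + M/k_0]} v\|^2.
\end{align*}
Both are compactly supported (bounded by the support of $v$) and piecewise constant in $s$, with jumps at finitely many points. By Fubini's theorem,
\begin{align*}
\int_{\R} P(s)\, ds = \sum_b |c_b|^2 \cdot \alpha = \alpha, \qquad \int_{\R} Q(s)\, ds = \sum_b |c_b|^2 \cdot \bigl(\alpha + 2M/k_0\bigr) = \alpha + \tfrac{2M}{k_0}.
\end{align*}
Setting $r := \frac{k_0\alpha}{k_0 \alpha + 2M} = \frac{\alpha}{\alpha + 2M/k_0}$, these two identities give
\begin{align*}
\int_{\R} \bigl(P(s) - r Q(s)\bigr)\, ds = \alpha - r\bigl(\alpha + \tfrac{2M}{k_0}\bigr) = 0.
\end{align*}

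Finally, since $P - rQ$ is a compactly supported step function with integral zero, it cannot be strictly negative everywhere, so there exists $s^* \in \R$ with $P(s^*) - r Q(s^*) \ge 0$. Taking $I := [s^*, s^* + \alpha]$ and $J := [s^* - M/k_0, s^* + \alpha + M/k_0]$, we obtain the required inequality $\|P_I v\|^2 \ge r\, \|P_J v\|^2$.

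I do not expect any real obstacle here; the only subtle point is verifying that $P - rQ$ being a nontrivial step function with vanishing integral guarantees a point where it is nonnegative, which is immediate since a strictly negative integrable function has strictly negative integral.
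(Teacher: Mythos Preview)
Your proof is correct and rests on the same averaging/pigeonhole idea as the paper. The paper carries it out discretely: it slides the window only over lattice translates $I_l = [l/k_0, l/k_0 + \alpha]$, $J_l = [(l-M)/k_0, (l+M)/k_0 + \alpha]$ for $l \in \Z$, counts that each basis vector $\psi^{k_0}_b$ lies in $\lfloor k_0\alpha + 1\rfloor$ of the $I_l$ and in $\lfloor k_0\alpha + 2M + 1\rfloor$ of the $J_l$, and then picks a good $l$ by comparing the two sums. Your continuous sliding-window version replaces these counts by the Lebesgue lengths $\alpha$ and $\alpha + 2M/k_0$, which has the mild advantage of producing the target ratio $\frac{k_0\alpha}{k_0\alpha + 2M}$ exactly, with no floor functions to track; the paper's discrete version, on the other hand, makes it transparent that one may choose $I$ with left endpoint on the lattice $\frac{1}{k_0}\Z$. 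Either variant works here.
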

\begin{proof}
The proof is given by an easy pigeonhole principle argument. 
For each $l \in \Z$, we consider the intervals
\begin{align*}
    I_l := \left[l/{k_0}, l/{k_0}+ \alpha\right], \quad J_l := \left[(l-M)/{k_0}, (l + M)/{k_0} + \alpha \right]. 
\end{align*}
Since $\|v\| = 1$, we have
\begin{align*}
    \sum_{l \in \Z} \|P_{I_l}v\|^2 &= \lfloor {k_0}\alpha + 1 \rfloor, \\ 
    \sum_{l \in \Z} \|P_{J_l}v\|^2 &= \lfloor {k_0}\alpha + 2M + 1 \rfloor. 
\end{align*}
So there must exist an integer $l \in \Z$ with
\begin{align*}
    \|P_{I_l}v\|^2 \ge \frac{{k_0}\alpha}{{k_0}\alpha + 2M} \|P_{J_l} v\|^2. 
\end{align*}
So we can set $I := I_l$ for such $l$ and get the result. 
\end{proof}

Fix an interval $I, J \subset \R$ satisfying the conditions in Lemma \ref{lem_pigeonhole}. 
By \eqref{eq_def_tildef.y} and the definition of $\phi^{k_0}$, we have
\begin{align*}
    P_{x + \frac{m}{{k_0}}} \phi^{k_0}(\tilde{f}) P_x = 0
\mbox{ for all }x \in B_{k_0} \mbox{ and } m \in \Z \mbox{ with }|m| > M. 
\end{align*}
so we get
\begin{align*}
    \lambda P_I v = P_I \phi^{k_0}(\tilde{f}) v =   P_I \phi^{k_0}(\tilde{f})P_J v. 
\end{align*}
From this and Lemma \ref{lem_pigeonhole}, we have
\begin{align*}
    \|P_I \phi^{k_0}(\tilde{f})P_J\| \ge \lambda \frac{\|P_Iv\|}{\|P_Jv\|}
    \ge \lambda \sqrt{\frac{{k_0}\alpha}{{k_0}\alpha + 2M}} 
    \ge \left(\|f\|_{C^0} + \frac{\epsilon}{2}\right)\left(1 + \frac{\epsilon}{50 \|f\|_{C^0}} \right)^{-1/2}, 
\end{align*}
where we used \eqref{eq_lambda.y} and \eqref{eq_def_k_model.y} for the last inequality. 
The last expression is estimated as
\begin{align*}
    \left(\|f\|_{C^0} + \frac{\epsilon}{2}\right)\left(1 + \frac{\epsilon}{50 \|f\|_{C^0}} \right)^{-1/2}
    \ge \left(\|f\|_{C^0} + \frac{\epsilon}{2}\right)\left(1 - \frac{\epsilon}{50 \|f\|_{C^0}} \right)
    > \|f\|_{C^0} + \frac{\epsilon}{4}, 
\end{align*}
since we have assumed that $\epsilon < \|f\|_{C^0}$. 
So we have
\begin{align}\label{eq_firstest_tildef.y}
     \|P_I \phi^{k_0}(\tilde{f})P_J\| >  \|f\|_{C^0} + \frac{\epsilon}{4}. 
\end{align}

Let us denote by $x_0 \in I$ the middle point of the interval $I$. 
We define $F \in C^\infty(T)$ by 
\begin{align*}
    F(\theta) := \tilde{f}(x_0, \theta). 
\end{align*}
We consider the operator $\Phi^{k_0}(F) \in \mathcal{H}_{k_0}$ defined in Lemma \ref{lem_const_model.y}. 

\begin{lem}\label{lem_approx_const_model.y}
We have
\begin{align*}
    \left\|P_I\left( \Phi^{k_0}(F) -  \phi^{k_0}(\tilde{f})\right)P_J\right\| \le \frac{\epsilon}{100}. 
\end{align*}
\end{lem}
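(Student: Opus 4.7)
The plan is to compare the two operators entry by entry in the orthonormal basis $\{\psi^{k_0}_b\}_{b \in B_{k_0}}$ and then apply Lemma \ref{lem_est_norm.y} to the restricted difference. First I would write out the matrix coefficients explicitly. Since $F(\theta) = \tilde f(x_0,\theta) = \sum_{|m|\le M} f_m(x_0) e^{\sqrt{-1}m\theta}$, the operator $\Phi^{k_0}(F)$ defined in Lemma \ref{lem_const_model.y} has entry $f_m(x_0)$ at position $(x + m/k_0, x)$ when $|m|\le M$, and entry $0$ when $|m|>M$. On the other hand, by \eqref{eq_mat_elem_model.y}, $\phi^{k_0}(\tilde f)$ has entry $f_m(x + m/(2k_0))$ at the same position for $|m|\le M$, and again vanishes for $|m|>M$ by the definition \eqref{eq_def_tildef.y} of $\tilde f$. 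Hence the difference $\Phi^{k_0}(F) - \phi^{k_0}(\tilde f)$ has nonzero entries only in the band $|m|\le M$, where the entry equals $f_m(x_0) - f_m(x + m/(2k_0))$.

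Next I would track which entries actually survive after multiplication by $P_I$ on the left and $P_J$ on the right. For these the row index $x + m/k_0$ lies in $I = [s,t]$ and the column index $x$ lies in $J = [s - M/k_0, t + M/k_0]$, so the midpoint $x + m/(2k_0)$ lies in $[s - M/(2k_0), t + M/(2k_0)]$, and hence satisfies
\begin{align*}
    \left| x + \frac{m}{2k_0} - x_0 \right| \le \frac{\alpha}{2} + \frac{M}{2k_0}.
\end{align*}
The constants in \eqref{eq_def_k_model.y} were chosen precisely so that $k_0 \ge 100 M/\alpha$ (using $\epsilon < \|f\|_{C^0}$), which forces $M/(2k_0) \le \alpha/2$, so the right-hand side is at most $\alpha$. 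The uniform-continuity statement \eqref{eq_unif_conti.y} then gives
\begin{align*}
    \left| f_m(x_0) - f_m\!\left(x + \frac{m}{2k_0}\right) \right| < \frac{\epsilon}{100(2M+1)}
\end{align*}
for every surviving $(x, m)$ with $|m| \le M$.

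Finally, I would apply Lemma \ref{lem_est_norm.y} to the operator $P_I(\Phi^{k_0}(F) - \phi^{k_0}(\tilde f))P_J$: only the $2M+1$ shifts with $|m| \le M$ contribute, each contributing at most $\epsilon/(100(2M+1))$, giving
\begin{align*}
    \left\|P_I(\Phi^{k_0}(F) - \phi^{k_0}(\tilde f))P_J\right\| \le (2M+1)\cdot \frac{\epsilon}{100(2M+1)} = \frac{\epsilon}{100},
\end{align*}
as required. The only real obstacle is the elementary but essential geometric check that the midpoint $x + m/(2k_0)$ stays within $\alpha$ of $x_0$; everything else is bookkeeping that is forced by the careful choice of $M$, $\alpha$, and $k_0$ made earlier in the argument.
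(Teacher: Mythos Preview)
Your proof is correct and follows essentially the same approach as the paper: compute the matrix entries of the difference, note that only the band $|m|\le M$ contributes, check that the relevant evaluation points stay within $\alpha$ of $x_0$ using the lower bound on $k_0$ from \eqref{eq_def_k_model.y}, apply \eqref{eq_unif_conti.y}, and sum via Lemma \ref{lem_est_norm.y}. Your treatment of the midpoint location is in fact slightly more careful than the paper's (you get $\alpha/2 + M/(2k_0)$ rather than $\alpha/2 + M/k_0$), but both bounds suffice.
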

\begin{proof}
Recall the definition of $\Phi^{k_0}(F)$ in Lemma \ref{lem_const_model.y}, the construction of $\tilde{f}$ in \eqref{eq_def_tildef.y} and the estimate \eqref{eq_unif_conti.y}. 
Applying the estimate in Lemma \ref{lem_est_norm.y}, 
\begin{align*}
    \|P_I( \Phi^{k_0}(F) -  \phi^{k_0}(\tilde{f}))P_J\|
    \le \sum_{|m| \le M} \sup_{x \in J} |f_m(x) - f_m(x_0)|
    \le \sum_{|m| \le M} \frac{\epsilon}{100(2M+1)}
    \le \frac{\epsilon}{100}. 
\end{align*}
Here we used the inequality $|x - x_0| \le |J|/2 = (\alpha + \frac{M}{{k_0}}) < \alpha$ for $x \in J$. 
This follows from \eqref{eq_def_k_model.y} and our assumption $\epsilon < \|f\|_{C^0}$. 
\end{proof}

By Lemma \ref{lem_const_model.y} and Lemma \ref{lem_approx_const_model.y}, we have
\begin{align*}
    \|P_I \phi^{k_0}(\tilde{f}) P_J\| &\le  \|P_I \Phi^{k_0}(F) P_J\| + \frac{\epsilon}{100}\\
    &\le \|\Phi^{k_0}(F)\| + \frac{\epsilon}{100} \\
    &= \|F\|_{C^0} + \frac{\epsilon}{100}\\
    &\le \|\tilde{f}\|_{C^0} + \frac{\epsilon}{100}. 
\end{align*}
By \eqref{eq_diff_sup.y}, we get
\begin{align*}
    \|P_I \phi^{k_0}(\tilde{f}) P_J\| \le \|f\|_{C^0} +\frac{\epsilon}{50}. 
\end{align*}
This contradicts with the estimate \eqref{eq_firstest_tildef.y}, so we get
\begin{align}\label{eq_proof_model1_sup.y}
    \sup_{k \to \infty} \|\phi^k(f)\| \le \|f\|_{C^0}. 
\end{align}

Next we show $\inf_{k \to \infty} \|\phi^k(f)\| \ge \|f\|_{C^0}$. 
Assume the contrary, and take $\delta > 0$ with $\delta < \|f\|_{C^0}$ such that
\begin{align*}
    \inf_{k \to \infty} \|\phi^k(f)\| < \|f\|_{C^0} - \delta. 
\end{align*}
We take $M' \in \N$, $\hat{f} \in C_c^\infty(X)$ and $\alpha' > 0$ exactly in the same way, this time $\epsilon$ replaced by $\delta$, as in the first half of this proof. 
Namely, we take $M' > 0$ so that
\begin{align*}
    \sum_{|m| > M'}\|f_m\|_{C^0} < \frac{\delta}{100}. 
\end{align*}
We define $\hat{f}$ by
\begin{align}\label{eq_def_hatf.y}
    \hat{f} := \sum_{|m| \le M' } f_m(x)e^{ \sqrt{-1} \langle m, \theta \rangle}. 
\end{align}
This satisfies
\begin{align}
    \|f - \hat{f} \|_{C^0} &< \frac{\delta}{100}, \label{eq_diff_inf.y} \\
  \|\phi^k(f) - \phi^k(\hat{f})\| &< \frac{\delta}{100}, \notag \\
  \inf_{k \to \infty} \|\phi^k(\hat{f})\| &< \|f\|_{C^0} - \frac{\delta}{2}. \label{eq_inf_tildef.y}
\end{align}
Take $\alpha' > 0$ so that
for all $m \in \Z$ with $|m | \le M'$, we have,
\begin{align}\label{eq_def_alpha_inf.y}
    |f_m(x) - f_m (y)| < \frac{\delta}{100(2M'+1)} \mbox{ for all } |x - y| \le \alpha' . 
\end{align}
Let us choose $k' \in \N$ such that
\begin{align}\label{eq_def_k_model_inf.y}
    k' \ge \frac{100M'\|f\|_{C^0}}{\delta \alpha'}, \mbox{ and }
    \|\phi^{k'}(\hat{f})\| \le \|f\|_{C^0} - \frac{\epsilon}{2}. 
\end{align}
This is possible by \eqref{eq_inf_tildef.y}. 

Since we have assumed that $\delta < \|f\|_{C^0}$, by \eqref{eq_def_k_model_inf.y}, we have $k'^{-1} < \alpha'$. 
So by \eqref{eq_def_alpha_inf.y}, we can take $(y_0, \theta_0) \in B_{k'} \times T$ such that
\begin{align}\label{eq_max_value.y}
   | \hat{f}(y_0, \theta_0)| \ge \|\hat{f}\|_{C^0} - \frac{\delta}{100}. 
\end{align}

Let us define the intervals $I', J' \subset \R$ by
\begin{align*}
    I' := [y_0 -\alpha'/2 + M'/k', y_0 +\alpha'/2 - M'/k' ], 
    \quad J' := [y_0 -\alpha'/2, y_0 +\alpha'/2]. 
\end{align*}
Let us define a function $\hat{F} \in C^\infty(T)$ by
\begin{align*}
    \hat{F}(\theta) := \hat{f}(y_0, \theta). 
\end{align*}
We consider the operator $\Phi^{k'}(\hat{F}) \in \mathcal{H}_k$ defined in Lemma \ref{lem_const_model.y}. 
Similarly as Lemma \ref{lem_approx_const_model.y}, we have
\begin{align}\label{eq_approx_const_model_inf.y}
    \|P_{I'}\left( \Phi^{k'}(\hat{F}) -  \phi^{k'}(\hat{f})\right)P_{J'}\| \le \frac{\delta}{100}. 
\end{align}
Let us define $v' \in P_{J'}\mathcal{H}_k$ by
\begin{align*}
    v' := \sum_{|m| \le k' \alpha' / 2} e^{- \sqrt{-1} \langle m,  \theta_0 \rangle} \psi^k_{y_0 + m/k'} \sqrt{d'\theta}. 
\end{align*}
We have $\|v'\|^2 = \#(J' \cap B_{k'})$. 
For $y_0 + \frac{p}{k'} \in I' \cap B_{k'}$, where $|p| \le k'\alpha'/2 - M'$, we have, denoting the matrix element of $\Psi^{k'}(\hat{F})$ by $K_{\hat{F}}(\cdot, \cdot)$, 
\begin{align*}
    \langle \psi^k_{y_0 + \frac{p}{k'}},  \Phi^{k'}(\hat{F}) v' \rangle 
    &= \sum_{|m| \le k' \alpha' / 2} K_{F'}\left(y_0 + \frac{p}{k'}, y_0 + \frac{m}{k'}\right)e^{- \sqrt{-1} \langle m,  \theta_0 \rangle} \\
    &= \sum_{|m| \le k' \alpha' / 2} \hat{f}_{p-m}(y_0)e^{- \sqrt{-1} \langle m,  \theta_0 \rangle} \\
    &= \sum_{m \in \Z} \hat{f}_{p-m}(y_0)e^{- \sqrt{-1} \langle m,  \theta_0 \rangle} \\
    &= e^{- \sqrt{-1} \langle p,  \theta_0 \rangle}\hat{f}(y_0, \theta_0). 
\end{align*}
Here the third equality used the fact that, by \eqref{eq_def_hatf.y}, we have $\hat{f}_{p-m} = 0$ for $|p| \le k'\alpha'/2 - M'$ and $|m| \le k'\alpha/2$.
So we get
\begin{align*}
    \|P_{I'}\Phi^{k'}(\hat{F}) v' \|^2 = |\hat{f}(y_0, \theta_0)|^2 \cdot \#(I' \cap B_k). 
\end{align*}
Since $P_{J'}v' = v'$, we see that
\begin{align*}
    \|P_{I'}\Phi^{k'}(\hat{F}) P_{J'}\|^2 &\ge \frac{\|P_{I'}\Phi^{k'}(\hat{F}) v' \|^2}{\|v'\|^2} \\
    &\ge |\hat{f}(y_0, \theta_0)|^2 \frac{\#(I' \cap B_k)}{\#(J' \cap B_k)} \\
    &\ge \left( \|f\|_{C^0} - \frac{\delta}{50}\right)^2 
    \left( 1 - \frac{2M'}{k'\alpha'}\right) \\
    &\ge \left( \|f\|_{C^0} - \frac{\delta}{50}\right)^2 
    \left( 1 - \frac{\delta}{50 \|f\|_{C^0}}\right) \\
    & \ge \left(\|f\|_{C^0} - \frac{\delta}{25}\right)^2. 
\end{align*}
where the third inequality used \eqref{eq_max_value.y} and \eqref{eq_diff_inf.y}, the fourth inequality used \eqref{eq_def_k_model_inf.y}, and the last inequality used $0 < \delta < \|f\|_{C^0}$. 
From this and \eqref{eq_approx_const_model_inf.y}, we get
\begin{align*}
    \|\phi^{k'}(\hat{f})\| \ge \|P_{I'}\phi^{k'}(\hat{f}) P_{J'}\|
    \ge \|f\|_{C^0} - \frac{\delta}{25} - \frac{\delta}{100} \ge \|f\|_{C^0} - \frac{\delta}{10}. 
\end{align*}
This contradicts with \eqref{eq_def_k_model_inf.y}, so we get
\begin{align}\label{eq_proof_model1_inf.y}
    \inf_{k \to \infty} \|\phi^k(f)\| \ge \|f\|_{C^0}. 
\end{align}
Combining \eqref{eq_proof_model1_sup.y} and \eqref{eq_proof_model1_inf.y}, we get the desired result. 
\end{proof}

\subsection{A proof of Theorem \ref{thm_main_v2.y}}
In this subsection, we give a proof of Theorem \ref{thm_main_v2.y}. We work in the settings in subsection \ref{subsec_construction_general.y}, and use notations there. 
\begin{proof}[Proof of Theorem \ref{thm_main_v2.y}]
As a preperation, we give a sufficient condition for a family of operators on $\{\mathcal{H}_k\}_k$ to be uniformly bounded.
For a linear operator $F$ on $\mathcal{H}_k$ and a subset $V \subset B$, we write
\begin{align*}
    \mathrm{supp}(F) \subset V
\end{align*}
if $P_V F P_V = F$. 

\begin{lem}\label{lem_op_bdd.y}
Suppose we are given a family of linear operators $\{\mathcal{A}_k\}_{k \in \N}$, $\mathcal{A}_k \colon \mathcal{H}_k \to \mathcal{H}_k$ and a compact subset $B' \subset B$ such that $\mathrm{supp}(\mathcal{A}_k) \subset B'$ for all $k$. 
Suppose that we are given a finite subset $\{U_i\}_{i \in I} \subset \mathcal{U}$, a partition $B' = \sqcup_{i \in I}V_i $ and a positive constant $M > 0$ such that
\begin{enumerate}
    \item[(A)] For each $i \in I$, we have $V_i \subset U_i$. 
    \item[(B)] For each $i \in I$, $B_i(V_i, M) := \{y \in \R^n \ | x \in V_i, \ |y - x| \le M \} \subset U_i$. 
    Here we regard $U_i$ as a subset in $\R^n$ by the action coordinate, and the norm is the Euclidean norm with respect to the action coordinate on $U_i$.
\end{enumerate}

Furthermore, we assume that,   
\begin{enumerate}
    \item For each $i \in I$, there exists a constant $C_i > 0$ such that, for all $k\in \N$, $x \in V_i$ and $p \in \Z^n$ with $|p| \le kM$, we have
    \begin{align*}
        \|P_{x + p/k} \mathcal{A}_k P_x\| \le \frac{C_i}{(1 + |p|)^{n + 1}}. 
    \end{align*}
    \label{cond_bd_1.y}
    \item There exists a constant $C' > 0$ such that, for all $k \in \N$ and pairs $(b, c) \in B_k \times B_k$ which cannot be expressed as $(b, c ) = (x + p/k, x)$ with $|p| \le kM$ in the action coordinate on $U_i$ with $c \in V_i$, we have
    \begin{align*}
        \|P_b \mathcal{A}_k P_c\| \le \frac{C'}{k^{2n}}. 
    \end{align*}
    \label{cond_bd_2.y}
\end{enumerate}
Then we have
\begin{align*}
    \sup_{k \to \infty}\|\mathcal{A}_k\| < \infty. 
\end{align*}
\end{lem}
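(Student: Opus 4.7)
The plan is to split $\mathcal{A}_k$ according to the partition $B' = \sqcup_{i \in I} V_i$ and then decompose each piece into a main ``near-diagonal'' part controlled by condition (1) and a ``long-range'' error part controlled by condition (2). Since $\mathrm{supp}(\mathcal{A}_k) \subset B'$, one has $\mathcal{A}_k = \sum_{i \in I} \mathcal{A}_k P_{V_i}$. For each $i$, I would further write
\begin{align*}
\mathcal{A}_k P_{V_i} = \mathcal{B}_{k,i} + \mathcal{C}_{k,i},
\end{align*}
where $\mathcal{B}_{k,i} := \sum_{|p|\le kM} \sum_{x \in V_i \cap B_k} P_{x + p/k} \mathcal{A}_k P_x$ (with shifts taken in the action coordinate on $U_i$, which makes sense thanks to (B)), and $\mathcal{C}_{k,i}$ collects the remaining matrix entries $P_b \mathcal{A}_k P_c$ with $c \in V_i \cap B_k$ that fall under hypothesis (2).

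For the main part $\mathcal{B}_{k,i}$, I would run the argument of Lemma \ref{lem_est_norm.y} adapted to the chart $U_i$: for each $p$ with $|p|\le kM$, the operator $\sum_{x \in V_i \cap B_k} P_{x + p/k} \mathcal{A}_k P_x$ factors as $S_p^{(i)} \cdot D_p^{(i)}$, where $S_p^{(i)}$ is the partial isometry on $\mathcal{H}_k|_{U_i}$ sending $\psi_x \mapsto \psi_{x + p/k}$ (well-defined precisely because (B) guarantees $x+p/k \in U_i$) and $D_p^{(i)}$ is a diagonal operator whose entries are the matrix coefficients $\langle \psi_{x+p/k}, \mathcal{A}_k \psi_x \rangle$ for $x\in V_i\cap B_k$. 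By condition (1), the norm of each such shift piece is at most $C_i/(1+|p|)^{n+1}$. Summing over $p\in \Z^n$ via the triangle inequality gives $\|\mathcal{B}_{k,i}\| \le C_i \sum_{p\in\Z^n} (1+|p|)^{-(n+1)} < \infty$, a bound independent of $k$.

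For the long-range error $\mathcal{C}_{k,i}$, the triangle inequality combined with hypothesis (2) yields
\begin{align*}
\|\mathcal{C}_{k,i}\| \;\le\; \sum_{\text{error pairs}} \|P_b \mathcal{A}_k P_c\| \;\le\; \#\{\text{error pairs}\}\cdot \frac{C'}{k^{2n}}.
\end{align*}
Every error pair has $c \in V_i \cap B_k$ and $b \in B' \cap B_k$ (again using $\mathrm{supp}(\mathcal{A}_k) \subset B'$). Since $B'$ is compact and $B_k$ is a lattice of spacing $1/k$, there is a constant $C_{B'}$ depending only on $B'$ such that $|B' \cap B_k| \le C_{B'} k^n$; hence the number of error pairs is $O(k^{2n})$ and $\|\mathcal{C}_{k,i}\| \le C'C_{B'}^2$ uniformly in $k$. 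Summing the estimates for $\mathcal{B}_{k,i}$ and $\mathcal{C}_{k,i}$ over the finite index set $I$ completes the bound on $\|\mathcal{A}_k\|$.

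The proof is essentially routine once the partition is set up; the only point requiring care is the justification that the shift operators entering $\mathcal{B}_{k,i}$ are well-defined partial isometries on the appropriate subspaces, which is exactly what the ``safety buffer'' hypothesis (B) is designed to ensure. The dyadic balance between the $k^{-2n}$ decay in (2) and the $k^{2n}$ counting bound on error pairs is precisely what allows the error term to contribute a $k$-independent constant.
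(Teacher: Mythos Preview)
Your proposal is correct and follows essentially the same approach as the paper's proof: decompose $\mathcal{A}_k$ along the partition $\{V_i\}$, separate each piece into a near-diagonal part handled by the shift-plus-diagonal argument of Lemma~\ref{lem_est_norm.y} together with condition~(1), and a far part whose norm is controlled by condition~(2) against the $O(k^{2n})$ count of lattice pairs in the compact support. The only cosmetic difference is that the paper first splits $\mathcal{A}_k P_{V_i}$ as $P_{U_i}\mathcal{A}_k P_{V_i} + (1-P_{U_i})\mathcal{A}_k P_{V_i}$ before isolating the near-diagonal contribution, whereas you go directly to the near/far dichotomy; the bookkeeping is otherwise identical.
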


\begin{proof}
First we note that, since $B'$ is compact, there exists a positive constant $a < + \infty$ such that for all $k \in \N$, 
\begin{align}\label{ineq_finite_lattice.y}
    \# (B_k \cap B') < \frac{a}{k^n}. 
\end{align}
Fix $k \in \N$. Since $\mathrm{supp}(\mathcal{A}_k) \subset \sqcup_{i\in I}V_i$, we can decompose the operator $\mathcal{A}_k$ as
\begin{align}\label{eq_A_k.y}
    \mathcal{A}_k = \sum_{i \in I} P_{U_i} \mathcal{A}_k P_{V_i} + \sum_{i \in I} (1 - P_{U_i}) \mathcal{A}_k P_{V_i}. 
\end{align}
First we estimate the first term in \eqref{eq_A_k.y}. 
We have
\begin{align*}
    P_{U_i} \mathcal{A}_k P_{V_i}  = \sum_{p \in \Z^n, |p| \le kM} \sum_{x \in V_i \cap B_k} P_{x + p/k}\mathcal{A}_k P_x
    + \sum_{x \in V_i \cap B_k} P_{U_i \setminus B_i(x, M)} \mathcal{A}_k P_{x}. 
\end{align*}
For each $p \in \Z^n$ with $|p| \le kM$, we have (see Lemma \ref{lem_est_norm.y})
\begin{align*}
    \|\sum_{x \in V_i \cap B_k} P_{x + p/k}\mathcal{A}_k P_x\| = 
    \max_{x \in V_i \cap B_k} \|P_{x + p/k} \mathcal{A}_k P_x\|
    \le \frac{C_i}{(1 + |p|)^{n + 1}}, 
\end{align*}
by the condition \eqref{cond_bd_1.y} in the statement. 
So we have
\begin{align*}
   \| \sum_{p \in \Z^n, |p| \le kM} \sum_{x \in V_i \cap B_k} P_{x + p/k}\mathcal{A}_k P_x\|
   &\le C_i \sum_{p \in \Z^n, |p| \le kM}\frac{1}{(1 + |p|)^{n + 1}} \\
   &\le C_i \sum_{p \in \Z^n}\frac{1}{(1 + |p|)^{n + 1}}
   =C_iC'', 
\end{align*}
for some $C'' < \infty$ which does not depend on $k$ nor $i\in I$. 
Moreover we have, for each $x \in V_i$, using the condition \eqref{cond_bd_2.y} in the statement, 
\begin{align*}
    \|P_{U_i \setminus B_i(x, M)} \mathcal{A}_k P_{x}\| \le \# (B_k\cap B) \cdot \frac{C'}{k^{2n}}. 
\end{align*}
Combining these, for each $i$, we have
\begin{align*}
    \|P_{U_i} \mathcal{A}_k P_{V_i}\| \le C_iC'' + (\# (B_k \cap B))^2 \cdot \frac{C'}{k^{2n}}.
\end{align*}
Next we estimate the second term in \eqref{eq_A_k.y}. 
Since all the pair $(b, c)$ with $c \in V_i$ and $b \notin U_i$ satisfies the assumption for the condition \eqref{cond_bd_2.y} in the statement of the Lemma, we get
\begin{align*}
    \|\sum_{i \in I} (1 - P_{U_i}) \mathcal{A}_k P_{V_i}\|
    \le (\# (B_k \cap B))^2 \cdot \frac{C'}{k^{2n}}. 
\end{align*}
Combining these, we get
\begin{align*}
    \|\mathcal{A}_k\| \le C''\sum_{i \in I}C_i +( \# I + 1 ) \cdot (\# B_k)^2 \cdot \frac{C'}{k^{2n}}
    \le  C''\sum_{i \in I}C_i +( \# I + 1 ) \cdot a^2 C'
\end{align*}
The last inequality used \eqref{ineq_finite_lattice.y}. 
Since we have $\# I < \infty$, we get the result. 
\end{proof}

Let us fix $f, g \in C_c^\infty(X)$ and let $\mathcal{A}_{err}^k$ be the operator on $\mathcal{H}_k$ defined by
\begin{align}\label{eq_A_err.y}
    \mathcal{A}_{err}^k := k^2 \left(\phi^k(f)\phi^k(g) - \phi^k(fg) + \frac{\sqrt{-1}}{2k}\phi^k(\{f, g\})\right). 
\end{align}
It is enough to show that the family of operators $\{ \mathcal{A}_{err}^k\}_k$ satisfies the conditions in Lemma \ref{lem_op_bdd.y}. 

Since $f, g$ are compactly supported and the covering $\mathcal{U}$ is locally finite and consists of relatively compact subsets, it is easy to see that there exists a compact subset $B' \subset B$ such that for all $k \in \N$, we have 
\begin{align*}
    \mathrm{supp}(\mathcal{A}_{err}^k), \ \mathrm{supp}(\phi^k(f)),\ \mathrm{supp}(\phi^k(g)) \subset B'. 
\end{align*}
We fix such $B'$. 
Then we fix a finite subset $\{U_i\}_{i \in I} \subset \mathcal{U}$, a partition a partition $B' = \sqcup_{i \in I}V_i $ and a positive constant $M > 0$ satisfying the conditions (A) and (B) in Lemma \ref{lem_op_bdd.y} (such datum always exist). 
\begin{rem}\label{rem_D_0.y}
There exist a positive constant $D_0 > 0$ such that, for any $i, j \in I$, any positive number $r > 0$ and any pair $(b, c) \in B \times B$ with $c \in U_i\cap U_j$ such that $B_{i}(c, r) \subset U_i$, we have
\begin{align*}
    b \notin B_i(c, r) \Rightarrow b \notin B_j(c, D_0r).  
\end{align*}
Here $B_i$, $B_j$ denotes the Euclidean ball with respect to the action coordinates on $U_i$, $U_j$, respectively. 
This can be seen as follows. 
Denoting the transition function of the coordinates on $U_j$ and $U_i$ by
\begin{align*}
    x \mapsto B_{ij}x + c_{ij}, 
\end{align*}
it is enough to set
\begin{align*}
    D_0 := \left( \max_{i, j \mbox{ with }U_i \cap U_j \neq \phi} \|B_{ij}\| \right)^{-1}. 
\end{align*}
From now on we fix such $D_0$. 
\end{rem}

Since $f, g$ are smooth and compactly supported, 
for each $N \in \N$, we have a positive constant $C_N > 0$ with the following conditions. 
 For all $i \in I$, if we express the Fourier coefficients of $f$ on $U_i$ with respect to the action-angle coordinates by $f_m(x)$, $m \in \Z^n$, and similarly for other functions, we have
    \begin{align}
        \|f_m\|_{U_i}, \|\nabla f_m\|_{U_i}, \|g_m\|_{U_i}, \|\nabla g_m\|_{U_i}, \|\{f, g\}_m\|_{U_i}, \|(fg)_m\|_{U_i} \le \frac{C_N}{(1 + |m|)^N},  \label{eq_bd_fourier.y}\\
        \|H( f_m)\|_{U_i}, \|H(g_m)\|_{U_i} \le \frac{C_N}{(1 + |m|)^N}.\label{eq_bd_hess.y}
    \end{align}
    Here we denote by $H(\cdot)$ the Hessian of a function, and we abuse the notations to write $\|\cdot\|_{U_i}$ the $C^0(U_i)$-norms with respect to the flat metrics induced by the Euclidean metric of the action coordinate on $U_i$. 

First we check the condition \eqref{cond_bd_2.y} in Lemma \ref{lem_op_bdd.y}. 
Let us take $(b,c ) \in B_k\times B_k$ which satisfies the assumption in the condition \eqref{cond_bd_2.y}. 
By Remark \ref{rem_D_0.y}, we have $b \notin B_i(c, D_0M)$ for all $i \in I$ with $c \in U_i$. 
By Lemma \ref{lem_arg.y} and \eqref{eq_bd_fourier.y}, we have
\begin{align}\label{eq_thm_est_1.y}
    \|P_b\phi^k(fg) P_c\| &\le \frac{C_N}{(1 + kD_0M)^N}, \\
    \|P_b\phi^k(\{f, g\}) P_c\| &\le \frac{C_N}{(1 + kD_0M)^N}. \notag
\end{align}
Also we have
\begin{align*}
    P_b\phi^k(f)\phi^k(g) P_c 
    =\sum_{d \in B_k}(P_b\phi^k(f) P_d)( P_d \phi^k(g) P_c). 
\end{align*}
The terms in the sum of the last equation is nonzero only when $(b, d)$ and $(d, c)$ are both close. 
Let us take $i \in I$ so that $c \in V_i$. 
Then by the assumption, either
\begin{enumerate}
    \item[(a)] $d \notin B_i(c, M/2)$, or
    \item[(b)] $d \in B_i(c, M/2)$, $B_i(d, M/2) \subset U_i$ and $b \notin B_i(d, M/2)$
\end{enumerate}
holds. 
In the case (a), we have $\|P_d \phi^k(g) P_c\| \le \frac{C_N}{(1 + kD_0M/2)^N}$ by Lemma \ref{lem_arg.y} and \eqref{eq_bd_fourier.y}, and we also have $\|P_b \phi^k(f) P_d\| \le C_1$. 
In the case (b), we have $\|P_b \phi^k(f) P_d\|\le \frac{C_N}{(1 + kD_0M/2)^N}$ and $\|P_d \phi^k(g) P_c\| \le C_1$ similarly. 
Thus in both cases we get
\begin{align*}
    \|(P_b\phi^k(f) P_d)( P_d \phi^k(g) P_c)\| \le \frac{C_1C_N}{(1 + kD_0M/2)^N}. 
\end{align*}
So we have
\begin{align}\label{eq_thm_est_2.y}
    \|P_b\phi^k(f)\phi^k(g) P_c\| \le \# (B_k\cap B') \cdot \frac{C_1C_N}{(1 + kD_0M/2)^N}. 
\end{align}
Since $B'$ is compact, $\# (B_k\cap B') = O(k^{n})$ as $k \to \infty$, the right hand side above is $O(k^{-N+n})$. 

Combining \eqref{eq_thm_est_1.y} and \eqref{eq_thm_est_2.y}, we see that $\|P_b A^k_{err} P_c\|$ is of $O(k^{-N})$ for any $N \in \N$ (with coefficients independent of $b$ or $c$), so in particular we have a constant $\tilde{C} > 0$ which is independent of $k$, such that
\begin{align}\label{eq_thm_est_offdiag.y}
    \|P_b \mathcal{A}^k_{err}P_c\| \le \frac{\tilde{C}}{k^{2n}}, 
\end{align}
for all $(b, c) \in B_k \times B_k$ satisfying the assumption in \eqref{cond_bd_2.y} of Lemma \ref{lem_op_bdd.y}. 

Next, we check the condition \eqref{cond_bd_1.y} in Lemma \ref{lem_op_bdd.y}. 
Only in this proof, we use the following notation. 
Define the set $W_i$ for $i \in I$ by
\begin{align*}
    W_i := \left\{(x, p, k) \in \left(V_i \cap \frac{\Z^n}{k}\right) \times \Z^n \times \N  : |p| \le kM\right\}. 
\end{align*}
The pairs $(b, c) \in B_k \times B_k$ satisfying the condition \eqref{cond_bd_1.y} in Lemma \ref{lem_op_bdd.y} are precisely those which can be expressed as $(x + \frac{p}{k}, x)$ for an element $(x, p, k) \in W_i$ for some $i \in I$. 
Until the end of this proof, we only consider elements $(x, p, k)$ belonging to $W_i$ for some $i \in I$. 

Let us fix $i \in I$.  
As in Lemma \ref{lem_arg.y}, using the action-angle coordinate on $X_{U_i}$ and the trivialization of $(L, \nabla)|_{X_{U_i}}$, we take the orthonormal basis of $\mathcal{H}_k|_{U_i}$, and express operators on $\mathcal{H}_k$ by the matrix coefficients with respect to that basis. 
For any $(x, p, k) \in W_i$, we have
\begin{align*}
    K_{fg}\left(x + \frac{p}{k}, x\right) = \exp\left(\sqrt{-1}C(x, p,k) \frac{|p|^3}{k^2}\right)\sum_{m \in \Z^n}(f_{p-m}g_{m})|_{x + \frac{p}{2k}}. 
\end{align*}
Here $C(x, p, k)$ is a real number such that $|C(x, p, k)| \le \|\nabla A\|_{U_i}$ by Lemma \ref{lem_arg.y}. 
First we observe that, for any $N \in \N$, there exists a constant $C'_N > 0$ such that, for all $(x, p, k) \in W_i$,
\begin{align*}
    \left\|K_{fg}\left(x + \frac{p}{k}, x\right) - \exp\left(\sqrt{-1}C(x, p, k) \frac{|p|^3}{k^2}\right)\sum_{m \in \Z^n, |m| \le kM}(f_{p-m}g_{m})|_{x + \frac{p}{2k}}\right\| \le \frac{C'_N}{k^N}. 
\end{align*}
i.e., we may replace the sum over $m \in \Z^n$ with those with $|m| \le kM $. 
Indeed, the above difference is bounded by
\begin{align*}
    \sum_{m \in \Z^n, |m| > kM} \|f_{p-m}\|_{U_i}\cdot \|g_m\|_{U_i}
    \le \sum_{m \in \Z^n, |m| > kM} \frac{C_1C_N}{(1 + |m|)^N}. 
\end{align*}
by \eqref{eq_bd_fourier.y}, and this is $O(k^{-N+n})$. 

Put $1 + \tilde{C}_{x, p, k}\frac{|p|^3}{k^2} = \exp(\sqrt{-1}C(x, p, k) \frac{|p|^3}{k^2})$. 
We have $|\tilde{C}_{x, p, k}| \le \|\nabla A\|_{U_i}$. 
We have, for all $(x, p, k) \in W_i$, 
\begin{align*}
    \left\|K_{fg}\left(x + \frac{p}{k}, x\right) - \left(1 + \tilde{C}_{x, p, k}\frac{|p|^3}{k^2} \right) \sum_{m \in \Z^n, |m| \le kM}(f_{p-m}g_{m})|_{x + \frac{p}{2k}}\right\| \le \frac{C'_N}{k^N}. 
\end{align*}
Moreover, we have, using the inequality $(1 + |p-m|)(1 + |m|) \ge 1 + |p|$, for any $N \in \N$, $p \in \Z^n $ and $k \in \N$, we have
\begin{align}\label{eq_sum_tech.y}
    \sum_{m \in \Z^n, |m| \le kM}\|f_{p-m}\|_{U_i} \cdot \|g_{m}\|_{U_i}
    &\le \sum_{m \in \Z^n, |m| \le kM} \frac{C_NC_{N + n + 1}}{(1 + |p-m|)^N(1 + |m|)^{N + n + 1}} \\ 
    &\le \frac{C_NC_{N+n+1}}{(1 + |p|)^N} \sum_{m \in \Z^n}\frac{1}{(1 + |m|)^{n+1}} \notag\\
    &\le \frac{D'_N}{(1 + |p|)^N}, \notag
\end{align}
where $D'_N > 0$ is a positive constant independent of $k$, $p$.
Thus we get, for any $N \in \N$ and $(x, p, k) \in W_i$, 
\begin{align}\label{eq_thm_est_fg.y}
    \left\|K_{fg}\left(x + \frac{p}{k}, x\right) - \sum_{m \in \Z^n, |m| \le kM}(f_{p-m}g_{m})|_{x + \frac{p}{2k}}\right\| &\le
    \|\nabla A\|_{U_i}\frac{|p|^3}{k^2}\cdot\frac{D'_{N + 3}}{(1 + |p|)^{N+3}} + \frac{C'_{N+2}}{k^{N+2}}\\
    &\le \frac{1}{k^2}\frac{\|\nabla A\|_{U_i}D'_{N + 3} +(M+1)^{N}C'_{N+2} }{(1 + |p|)^N} 
    . \notag
\end{align}
where the last inequality uses the fact that $|p|+1 \le kM +1\le k(M+1) $. 
We apply the same argument for $\{f, g\}$ and get that, there exists a constant $D''_N$ such that for all $(x, p, k) \in W_i$, we have
\begin{align}\label{eq_thm_est_bracket.y}
    &\left\| K_{\{f, g\}}\left(x +\frac{p}{k}, x\right) -
       \sqrt{-1}\sum_{m \in \Z^n, |m| \le kM}\left.\left\{ \langle m, \nabla f_{p-m} \rangle g_{m}
       - f_{p-m}\langle p-m, \nabla g_{m} \rangle  \right\}\right|_{x+ \frac{p}{2k}}\right\| \\
     &\le \frac{1}{k^2} \frac{D''_N}{(1 + |p|)^N}.  \notag
\end{align}

We have, for $(x, p, k) \in W_i$, 
\begin{align*}
    P_{x + p/k}\phi^k(f)\phi^k(g) P_x 
    =\sum_{d \in B_k}(P_{x + p/k}\phi^k(f) P_d)( P_d \phi^k(g) P_x). 
\end{align*}
When a point $d \in B_k$ satisfies $d \notin B_i(x, M)$, we have 
\begin{align*}
    \|(P_{x + p/k}\phi^k(f) P_d)( P_d \phi^k(g) P_x)\| \le \frac{C_1C_N}{(1 + kD_0M)^N}, 
\end{align*}
for any $N$, similarly as before. 
Since we have $\mathrm{supp}(\phi^k(g)) \subset B'$ for the compact subset $B' \subset B$ and we have $\# (B' \cap B_k) = O(k^n)$, we see that for each $N$, there exists a constant $C_N''$ such that, for any $(x, p, k) \in W_i$, we have
\begin{align*}
\left\|K_{\phi^k(f)\phi^k(g)}\left(x + \frac{p}{k}, x\right) 
    - \sum_{m \in \Z^n, |m| \le kM} K_f\left(x + \frac{p}{k}, x + \frac{m}{k}\right)K_g\left(x + \frac{m}{k}, x\right) \right\| \le \frac{C''_N}{k^N}. 
\end{align*}
Moreover we have, by Lemma \ref{lem_arg.y}, for any $(x, p, k) \in W_i$ and $m \in \Z^n$ with $|m| \le kM$,  
\begin{align*}
    &\left|K_f\left(x + \frac{p}{k}, x + \frac{m}{k}\right)K_g\left(x + \frac{m}{k}, x\right)
    -f_{p-m}\left(x + \frac{p + m}{2k}\right)g_m\left(x + \frac{m}{2k}\right)\right| \\
    &\le \frac{|p-m|^3 + |m|^3}{k^2}\cdot \|\nabla A\|_{U_i} \cdot \|f_{p-m}\|_{U_i}\cdot \|g_m\|_{U_i}. 
\end{align*}
By the same argument as \eqref{eq_sum_tech.y}, we have
\begin{align*}
\sum_{m \in \Z^n, |m| \le kM}(|p-m|^3 + |m|^3)\cdot \|f_{p-m}\|_{U_i}\cdot \|g_m\|_{U_i}
\le \frac{C'''_N}{(1 + |p|)^N}
\end{align*}
for some constant $C'''_N$ independent of $k$.
Thus we see that, for each $N$ there exists a constant $D'''_N$ such that, for any $(x, p, k) \in W_i$, 
\begin{align}\label{eq_thm_est_comp.y}
    \left\|K_{\phi^k(f)\phi^k(g)}\left(x + \frac{p}{k}, x\right) 
    - \sum_{m \in \Z^n, |m| \le kM} f_{p-m}\left(x + \frac{p +m}{2k}\right)g_m\left(x + \frac{m}{2k}\right) \right\| \le \frac{1}{k^2}\frac{D'''_N}{(1 + |p|)^N}. 
\end{align}
Moreover we have, by \eqref{eq_bd_hess.y}, 
\begin{align*}
    &\left\|f_{p-m}\left(x + \frac{m+ p}{2k}\right)
    -\left.\left\{f_{p-m} + \left\langle \frac{m}{2k}, \nabla f_{p-m}\right\rangle\right\}\right|_{x + \frac{p}{2k}}\right\| \le \frac{C_N}{(1 + |p-m|)^N}\frac{|m|^2}{8k^2}, \\
    &\left\|g_m\left(x + \frac{m}{2k}\right)
    -\left.\left\{ g_m - \left\langle \frac{p-m}{2k}, \nabla g_m\right\rangle\right\}\right|_{x + \frac{p}{2k}}\right\| \le \frac{C_N}{(1 + |m|)^N}\frac{|p-m|^2}{8k^2}. 
\end{align*}
We estimate
\begin{align*}
    &\frac{C_N}{(1 + |p-m|)^N}\frac{|m|^2}{8k^2}\cdot \|g_m\| 
    \le \frac{C_NC_{N+n+3}}{8k^2(1 + |p - m|)^N(1 + |m|)^{N + n + 1}} , \\
    &\frac{C_{N + n + 1}}{(1 + |m|)^{N+n+1}}\frac{|p-m|^2}{8k^2} \cdot \|f_{p-m}\|\le \frac{C_{N+2}C_{N+n+1}}{8k^2(1 + |p - m|)^N(1 + |m|)^{N + n + 1}}, \\
    &\|\langle \frac{m}{2k}, \nabla f_{p-m}\rangle\| \cdot \|\langle \frac{p-m}{2k}, \nabla g_m\rangle\|
    \le  \frac{C_{N+1}C_{N+n+2}}{4k^2(1 + |p - m|)^N(1 + |m|)^{N + n + 1}}. 
\end{align*}
So by a similar estimate as in \eqref{eq_sum_tech.y}, we see that, for each $N$ there exists a constant $D''''_N$ such that, for any $(x, p, k) \in W_i$ we have
\begin{align}\label{eq_thm_est_final.y}
    &\left\|\sum_{m \in \Z^n, |m| \le kM} f_{p-m}\left(x + \frac{p +m}{2k}\right)g_m\left(x + \frac{m}{2k}\right) \right. \\
    &\left. -\sum_{m \in \Z^n, |m| \le kM}\left.\left\{ f_{p-m}\cdot g_{m} + \left\langle \frac{m}{2k}, \nabla f_{p-m}\right\rangle g_m
    - f_{p-m}\cdot\left\langle \frac{p-m}{2k}, \nabla g_m\right\rangle \right\}\right|_{x + \frac{p}{2k}}\right\|\notag \\
    & \le \frac{1}{k^2}\cdot \frac{D''''_N}{(1 + |p|)^N}.  \notag
\end{align}

Combining \eqref{eq_thm_est_fg.y}, \eqref{eq_thm_est_bracket.y}, \eqref{eq_thm_est_comp.y} and \eqref{eq_thm_est_final.y}, there exists a constant $\tilde{D}_i$ such that, for any $(x, p, k) \in W_i$ we have
\begin{align}\label{eq_thm_diag.y}
    \|P_{x + p/k}\mathcal{A}^k_{err}P_x\| \le \frac{\tilde{D}_i}{(1 + |p|)^{n + 1}}. 
\end{align}

By \eqref{eq_thm_est_offdiag.y} and \eqref{eq_thm_diag.y}, we see that the family of operators $\{\mathcal{A}_{err}^k\}_k$ satisfies the conditions of Lemma \ref{lem_op_bdd.y}, so we get the result. 
\end{proof}

\section*{Acknowledgment}
The author is grateful to Mikio Furuta, Hajime Fujita, Kota Hattori, Kaoru Ono and Takahiko Yoshida for interesting discussions. 
This work is supported by Grant-in-Aid for JSPS Fellows Grant Number 19J22404.

\bibliographystyle{plain}
\bibliography{quantization}

\end{document}